\documentclass[a4paper,10pt]{amsart}
\usepackage[utf8]{inputenc}
\usepackage{amsmath,amssymb,amsthm}
\usepackage{xcolor}
\usepackage{enumerate}
\usepackage{tikz}
\usepackage{tikz-cd}
\usepackage[all]{xy}
\usepackage{float}
\usepackage{soul}
\usepackage{thmtools}

\usepackage{listings}

\usepackage{hyperref}
\hypersetup{hidelinks}

\usepackage[noabbrev,capitalise,nameinlink]{cleveref}
\crefname{subsection}{Subsection}{Subsections}

\usepackage{stmaryrd}

\theoremstyle{plain}

\newtheorem{thm}{Theorem}[section]
\newtheorem{corollary}[thm]{Corollary}
\newtheorem{lemma}[thm]{Lemma}
\newtheorem{proposition}[thm]{Proposition}

\newtheorem{thmABC}{Theorem}

\newtheorem{corABC}[thmABC]{Corollary}

\newtheorem*{conjecture*}{Conjecture}

\theoremstyle{definition}

\newtheorem{definition}[thm]{Definition}
\newtheorem{remark}[thm]{Remark}
\newtheorem{example}[thm]{Example}
\newtheorem{prob}[thm]{Problem}

\newcommand{\bbQ}{\mathbb{Q}}
\newcommand{\bbF}{\mathbb{F}}
\newcommand{\bbZ}{\mathbb{Z}}
\newcommand{\bbN}{\mathbb{N}}

\renewcommand{\epsilon}{\varepsilon}

\numberwithin{equation}{section}

\newcommand{\comm}[1]{}

\title[On probabilistic identities and coset identities in pro-$p$ groups]{On probabilistic identities and \\ coset identities in pro-$p$ groups}
\author[Kionke]{Steffen Kionke}
\address{FernUniversit\"at in Hagen, Fakult\"at f\"ur Mathematik und Informatik, 58084 Hagen, Germany}
\email{steffen.kionke@fernuni-hagen.de}
\author[Otmen]{Nowras Otmen}
\address{Dipartimento di Matematica `Tullio Levi-Civita', Università degli Studi di Padova, Via Trieste, 63 - 35131 Padova, Italy}
\email{nowras.naufel@math.unipd.it}
\author[Toti]{Tommaso Toti}
\address{Dipartimento di Matematica e Applicazioni, Università degli Studi di Milano-Bicocca, Via Roberto Cozzi, 55 - 20126, Milano, Italy}
\email{t.toti@campus.unimib.it}
\address{Departamento de Matem\'aticas, Universidad del Pa\'is Vasco/Euskal Herriko Unibertsitatea, Apartado 644 - 48080, Bilbao, Spain}
\email{ttoti001@ikasle.ehu.eus}
\author[Vannacci]{Matteo Vannacci}
\address{Dipartimento di Matematica `Ulisse Dini', Universit\`a degli Studi di Firenze, Viale Morgagni, 67/a - 50134 Firenze, Italy}
\email{matteo.vannacci@unifi.it}
\author[Weigel]{Thomas Weigel}
\address{Dipartimento di Matematica e Applicazioni, Università degli Studi di Milano-Bicocca, Via Roberto Cozzi, 55 - 20126, Milano, Italy}
\email{thomas.weigel@unimib.it}

\subjclass[2010]{Primary 20P05;
Secondary 20E18}

\date{\today}


\begin{document}

\begin{abstract}
It is shown that a probabilistic identity on a $\sigma$-compact $K$-analytic group $G$, $K$ a non-archimedean local field, is a coset identity. As an application, one concludes that compact $K$-analytic groups and various pro-$p$ groups obtained from free constructions satisfy a probabilistic Tits alternative. By means of Lie-theoretic methods, we also study torsion probabilistic identities in virtually free pro-$p$ and compact $p$-adic analytic groups.
\end{abstract}

\maketitle

\section{Introduction}

\subsection{State of the art}
The main purpose of this paper is to study the interplay between three important concepts in Group Theory, identities, coset identities and probabilistic identities, in certain classes of pro-$p$ groups.

Let $\Gamma$ be a group and let $w\in F_r$ be a non-trivial word in the free abstract group of rank $r$. One says that $w$ is an identity on $\Gamma$ if for all $g_1,\ldots,g_r \in \Gamma$ one has $w(g_1,\ldots,g_r)=1$.
The study of identities in groups has a long tradition, e.g., already W.~Burnside
raised the question whether a finitely generated group of finite exponent $n$ is necessarily a finite group. Several interesting classes of groups are defined by imposing an identity: abelian, nilpotent and solvable groups, and groups of finite exponent. Moreover, the restricted Burnside problem, which was one of the central problems in Algebra of the twentieth century, was centred around torsion identities.

A natural generalisation of identities is the concept of \emph{coset identities}. One says that $w$ is a coset identity on $\Gamma$  if there exists a finite index subgroup $H$ of $\Gamma$ and elements $g_1,\ldots,g_r \in \Gamma$ such that $w(g_1h_1,\ldots,g_rh_r)=1$ for all $h_1,\dots,h_r\in H$. Even though it is clear that any identity is a coset identity, all known examples of groups satisfying a coset identity also satisfy an identity.

In recent years probabilistic methods in Group Theory have been  used quite successfully. Some difficult group-theoretical results - like the Magnus problem - have been reproved with a much shorter probabilistic proof (see \cite{DPSS}).  One natural way to introduce probability on a profinite group $G$ is to endow the group $G$ with the Haar probability measure $\mu$ making $(G,\mathcal{A},\mu)$ a probability space, where
$\mathcal{A}$ denotes the Borel $\sigma$-algebra.

If $\Gamma$ is residually finite, one may talk about the probability $P(\Gamma,w)$ that $k$ random elements in the profinite completion $\widehat{\Gamma}$ satisfy $w$. One  says that $w$ is a \emph{probabilistic identity} for $\Gamma$ if $P(\Gamma,w)$ is positive. If $\Gamma$ does not admit any probabilistic identity, one says that it is \emph{randomly free}. Again, it is clear that a coset identity yields a probabilistic identity. However, to the best of the authors' knowledge, all known examples of profinite groups which admit probabilistic identities are indeed profinite groups which admit identities. 

The interplay between identities, coset identities and probabilistic identities has been investigated 
quite intensively.
\comm{several times in the literature.} 
In 2018 Larsen and Shalev proved that groups with infinitely many non-isomorphic non-abelian upper composition factors are randomly free \cite[Thm.~1.5]{LS18}; thus, one may restrict their attention to groups with restricted composition factors, e.g.\ pro-$p$ groups. Their result also implies that finitely generated linear groups satisfy a \emph{probabilistic Tits alternative}: they are either virtually solvable or randomly free. Moreover, by \cite[Theorem 1.4]{LS16} the concepts of coset identity and probabilistic identity coincide for groups in this class. Other works investigating the relation between identities and coset identities include \cite{WZ}, \cite{CM07}, \cite{BG} \cite{Ni} and \cite{LP00}.

Already in 2000 Aner Shalev made the following ``very challenging conjectures'' (loc.\ cit.):

\begin{conjecture*}[{\cite[Conjectures 2 and 3, pg.~7]{Sh00}}] \qquad

    \begin{enumerate}
        \item[(a)]\label{conj:a} Let $G$ be a finitely generated pro-$p$ group satisfying some probabilistic identity $w$. Then $G$ satisfies some identity.
        \item[(b)]\label{conj:b} Let $G$ be a finitely generated pro-$p$ group satisfying some coset identity. Then $G$ satisfies some identity.
    \end{enumerate}
\end{conjecture*}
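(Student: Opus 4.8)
The plan is to prove the stronger assertion (a), from which (b) is immediate: a coset identity is visibly a probabilistic identity, since the coset-box $g_1H\times\cdots\times g_rH$ carries Haar measure $[G:H]^{-r}>0$ on which $w$ vanishes, so (a) applied to it returns an identity. I would factor (a) into two reductions. \emph{Reduction A}: on a finitely generated pro-$p$ group a positive word-probability $P(G,w)>0$ forces a coset identity. \emph{Reduction B}: a coset identity forces an honest identity -- which is exactly part (b). Combining the two yields (a), and Reduction B settles (b) on its own.

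For Reduction A, set $W=w^{-1}(1)\subseteq G^r$, a closed set with $\mu^{\otimes r}(W)=P(G,w)=c>0$. Exhausting $G=\varprojlim G/N$ by open normal subgroups, positivity of the measure is equivalent to the finite densities $|L_N|/|G/N|^r$ of the solution sets $L_N=\{w=1\}\subseteq(G/N)^r$ being bounded below by $c$, and a coset identity is the statement that some $L_N$ contains a full coset-box. The clean case is when $G$ is $p$-adic analytic: then the word map is $K$-analytic for $K=\bbQ_p$, so $W$ is a $\bbQ_p$-analytic subset of $G^r$ and a proper analytic subset has measure zero; hence $\mu^{\otimes r}(W)>0$ forces $W$ to contain a nonempty open set, which automatically contains a coset-box. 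This is precisely the $K$-analytic reduction announced in the abstract, specialised to $\bbQ_p$. The hard part -- and the genuinely open frontier of the conjecture -- is the non-analytic case: by Lubotzky--Mann and du Sautoy a finitely generated pro-$p$ group fails to be $p$-adic analytic exactly when it has super-polynomial subgroup growth (equivalently, infinite rank), and then no analytic rigidity is available. A Lebesgue-density (martingale) argument still shows that along a shrinking chain of coset-boxes the relative density of $W$ tends to $1$ at almost every point of $W$; but density tending to $1$ is not density equal to $1$, and a positive-measure fiber of a word map on a non-analytic pro-$p$ group can a priori contain no coset-box at all. Closing this gap is the main obstacle.

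For Reduction B, and for the non-analytic part of Reduction A, my plan is to pass to the associated graded restricted Lie algebra $L(G)=\bigoplus_{i\ge 1}D_i(G)/D_{i+1}(G)$ over $\bbF_p$ attached to a suitable $p$-central filtration, where the Zelmanov machinery behind the restricted Burnside problem becomes available: a graded Lie algebra that satisfies a polynomial identity and is generated by ad-nilpotent elements is nilpotent, which translates back into an identity on $G$. Given a coset identity $w(g_1x_1,\dots,g_rx_r)=1$ on an open subgroup $H$, the idea is to linearise it -- after absorbing the constants $g_i$, which become inner derivations on $L(H)=L(G)$, the vanishing of the word-with-constants should descend to a nontrivial polynomial identity on $L(G)$, forcing $L(G)$ to be PI and hence $G$ into a variety. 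I expect two points to resist a direct treatment. First, eliminating the constants: in the non-linear setting one cannot conjugate them away as in the linear case treated by \cite{LS16,LS18}, so controlling the interaction of the fixed $g_i$ with the filtration needs a new argument. Second, and most seriously, transporting a purely measure-theoretic hypothesis (Reduction A for non-analytic $G$) into an honest polynomial identity on $L(G)$ is exactly the step where the naive density estimates fail; here I would test the approach on the critical self-similar examples -- the Grigorchuk and Gupta--Sidki groups and Nottingham-type groups -- whose branch structure gives explicit control of $L(G)$ and should either produce the coset identity by a direct verbal computation or expose a counterexample, thereby deciding the conjecture.
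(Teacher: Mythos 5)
You were asked to prove a statement that the paper itself does not prove: it is stated there as an open conjecture of Shalev \cite{Sh00}, and the paper only settles special cases. Your proposal is candid about this, and its factorisation matches the paper's strategy exactly as far as the paper goes: your Reduction A in the analytic case is the paper's Theorem~A (every probabilistic identity on a $\sigma$-compact $K$-analytic group is an open coset identity), proved there by the same mechanism you sketch --- the word map is analytic, and by a Weierstra\ss{}-preparation argument a fibre of an analytic map is either locally negligible or has non-empty interior; and your Reduction B is verbatim Conjecture~(b), which the paper does not prove but only quotes as known for $p$-adic analytic pro-$p$ groups from \cite[Prop.~1.9]{BG}. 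So the two steps you leave open are precisely where the paper (and the literature) stop; what you have is a correct reduction plus a research programme, not a proof.

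Concretely, the gaps are these. For Reduction B, the passage from a coset identity on an open subgroup to a polynomial identity on the graded restricted Lie algebra $L(G)$ is essentially the known Wilson--Zelmanov result \cite{WZ}, but your final step fails: Zelmanov's nilpotency theorem requires the Lie algebra to be generated by ad-nilpotent elements, which is available for torsion words (this is the restricted Burnside setting) but for no general word, and a PI on $L(G)$ is not known to translate back into a group identity on $G$ --- ``forcing $G$ into a variety'' does not follow. Your ``absorbing the constants as inner derivations'' also does not resolve the constants problem; this is an acknowledged obstruction, not a technicality. For the non-analytic half of Reduction A, your Lebesgue-density observation is correct but, as you note, inconclusive; the paper's \cref{prob:rank_grad} records exactly this frontier (positive rank gradient rules out coset identities by \cite{Ni}, but probabilistic identities remain open). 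Finally, your proposed test cases are dead ends: the Grigorchuk and Gupta--Sidki groups are weakly branch, hence randomly free by \cite{Ab05}, and the Nottingham group is randomly free by \cite{Sz05} (both cited in the paper), so none of them satisfies any probabilistic identity and none can decide the conjecture; a more informative test object is the non-solvable pro-$p$ group with an identity of \cite[Corollary~10]{CM07}, which shows the conclusion of (a)/(b) can hold without solvability but still offers no route to the general implication.
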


Conjecture~(b) was answered affirmatively for $p$-adic analytic pro-$p$ groups in \cite[Prop.~1.9]{BG}.

In this article we study two main classes of finitely generated pro-$p$ groups, $p$-adic analytic pro-$p$ groups and certain ``free constructions'' of pro-$p$ groups (e.g.\ free pro-$p$ products), and we show that groups in these classes satisfy a probabilistic Tits alternative. In particular, this answers Conjecture (a) above affirmatively for these groups.

\subsection{Main results}
We say that a group word $w$ is an \emph{open coset identity} in a topological group $G$, if $w$ is satisfied on a product of cosets of some open subgroup. In profinite groups open coset identities are coset identities as defined above. Our first main result shows that for $\sigma$-compact groups which are analytic over a non-archimedean local field $K$ the concepts of satisfying a probabilistic identity and satisfying a open coset identity coincide.

\begin{thmABC}\label{thm:analytic}
Let $K$ be a non-archimedean local field and let $G$ be a $\sigma$-compact $K$-analytic group. If $w$ is a probabilistic identity for $G$, then $w$ is an open coset identity.
\end{thmABC}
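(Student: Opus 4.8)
The plan is to exploit the rigidity of non-archimedean analytic functions: the word map $w\colon G^r\to G$ is $K$-analytic (being built from multiplication and inversion), so its fibre $w^{-1}(1)$ is a $K$-analytic subset, and the point is that such a set cannot have positive Haar measure unless it locally fills up a box of open cosets. First I would fix the interpretation that ``$w$ is a probabilistic identity'' means that $w^{-1}(1)\subseteq G^r$ has positive Haar measure, for a fixed Haar measure on the locally compact group $G$. By van Dantzig's theorem $1\in G$ has a basis of compact open subgroups, so I may choose a chart neighbourhood $V$ of $1$, that is, a compact open subgroup carrying analytic coordinates. Since $w$ is continuous, $w^{-1}(V)$ is open and of positive measure (it contains $w^{-1}(1)$); using that $G$ is $\sigma$-compact, I would cover $w^{-1}(V)$ by countably many boxes $g_1H\times\cdots\times g_rH$ contained in $w^{-1}(V)$, with $H$ a compact open subgroup, and select one box $B$ on which $w^{-1}(1)$ still has positive measure. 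This reduction, turning positive total measure into positive measure on one of countably many boxes, is exactly where $\sigma$-compactness is used, and by construction $w(B)\subseteq V$.

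Next I would pass to local coordinates. Using an analytic chart $\psi\colon V\to\mathfrak g_0\subseteq K^d$ with $\psi(1)=0$, and parametrising each factor $g_jH$ by $H$ and then $H$ by a chart into $K^d$, the restriction $w|_B$ becomes a $K$-analytic map $F=(f_1,\dots,f_d)\colon W\to K^d$ on an open box $W\subseteq K^{rd}$, given by convergent power series in the $rd$ coordinates. Here $w(g_1h_1,\dots,g_rh_r)=1$ if and only if $F$ vanishes, so $w^{-1}(1)\cap B$ corresponds to the common zero set $\bigcap_{j=1}^d\{f_j=0\}$. Since charts are analytic diffeomorphisms, the push-forward of Haar measure on $B$ is equivalent to Lebesgue measure on $W$; hence this common zero set has positive Lebesgue measure, and in particular each individual set $\{f_j=0\}$ has positive measure.

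The heart of the argument is then the rigidity lemma: a convergent power series $f$ on an open polydisc in $K^n$ whose zero set has positive Haar measure must vanish identically. I expect this to be the main obstacle, although it is classical. I would prove it by induction on $n$: the case $n=1$ is Strassmann's theorem, which bounds the number of zeros of a non-zero power series in a ball, so its zero set is finite and hence null; for the inductive step I would write $f$ as a power series in the last variable with coefficients that are power series in the first $n-1$ variables, note that these coefficients are not all identically zero, and invoke the inductive hypothesis to see that the set of bad slices, on which $f$ vanishes identically in the last variable, is null, whence Fubini together with the case $n=1$ forces the whole zero set to be null. Applying this to each $f_j$ gives $f_j\equiv 0$, so $F\equiv 0$ on $W$, and therefore $w(g_1h_1,\dots,g_rh_r)=1$ for all $h_1,\dots,h_r\in H$. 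This is precisely the assertion that $w$ is an open coset identity, with open subgroup $H$ and coset representatives $g_1,\dots,g_r$.
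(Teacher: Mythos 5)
Your argument is correct and follows the same overall strategy as the paper --- reduce everything to the statement that a non-zero convergent power series over the valuation ring $R$ has Haar-null zero set, and conclude that $w^{-1}(1_G)$, having positive measure, must contain an entire box $g_1H\times\cdots\times g_rH$ --- but the technical core is genuinely different. The paper packages the reduction as a dichotomy (\cref{lem:analyticMaps}: the fibre of an analytic map is either locally negligible or has non-empty interior, with \cref{lem:locallyneg} converting local negligibility into negligibility via $\sigma$-compactness); its proof of the key rigidity step reduces to one target coordinate, performs a linear substitution $X_i\mapsto X_i+\lambda_iX_m$ to make the series regular in the last variable, rescales variables so that the Weierstra\ss{} Preparation Theorem (\cref{thm:weierstrass}) applies, and then bounds the number of roots in each $X_m$-slice by the degree of the resulting distinguished polynomial before integrating with Fubini. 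You instead prove the rigidity lemma by induction on the number of variables, with Strassmann's theorem as the base case and a bad-slice/good-slice Fubini argument in the inductive step; this avoids the preparation theorem and all of its change-of-variables and rescaling bookkeeping, at the price of not producing the local normal form (which the paper does not use anyway). Your Lindel\"of cover by boxes followed by countable subadditivity to select one box of positive measure plays exactly the role of the locally-negligible machinery, and is where $\sigma$-compactness enters in both arguments. Two small points you should make explicit: the boxes must be taken small enough that $w$, read in charts, is given by a single convergent power series on the whole box (analyticity is only a local power-series condition), and the identification of positive Haar measure in $G^r$ with positive Lebesgue measure in analytic charts deserves a word (mutual absolute continuity via the change-of-variables formula for analytic diffeomorphisms) --- the paper's own proof relies on the same fact implicitly.
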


One of the most studied classes of finitely generated pro-$p$ groups is that of $K$-analytic pro-$p$ groups, for $K$ a non-archimedean local field; for instance, closed pro-$p$ subgroups of $\mathrm{GL}_n(\mathbb{Z}_p)$ and $\mathrm{GL}_n(\mathbb{F}_p[\![t]\!])$ are such examples. Here we show that a probabilistic Tits alternative holds for this class. 
By \cite[Thm.~1.3]{BG} and \cite[Proposition 5.1]{JZ02}, the following corollary immediately follows. We thank Andoni Zozaya for pointing out to us Jaikin--Zapirain's result.

\begin{corABC}\label{cor:3.5}
    Let $G$ be a compact $K$-analytic group for a local field $K$.
    Then $G$ is either virtually solvable or randomly free. In particular, a $p$-adic analytic pro-$p$ group is either solvable or it is randomly free.
\end{corABC}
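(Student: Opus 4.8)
The plan is to deduce the dichotomy from \cref{thm:analytic} together with the coset-identity form of the Tits alternative recorded in \cite[Thm.~1.3]{BG}, arguing by contraposition: assuming $G$ is \emph{not} randomly free, I will show it is virtually solvable. First I would record that the hypotheses of \cref{thm:analytic} are met. A compact subgroup $G$ of $\mathrm{GL}_n(K)$ is $\sigma$-compact (being compact), and, as a closed subgroup of $\mathrm{GL}_n(K)$, it carries the structure of a $K$-analytic group; hence $G$ is a $\sigma$-compact $K$-analytic group and lies within the scope of \cref{thm:analytic}.

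Now suppose $G$ is not randomly free. By definition this means that $G$ admits a probabilistic identity, i.e.\ there is a word $w$ with $P(G,w)>0$. Applying \cref{thm:analytic}, the word $w$ is then an open coset identity on $G$, so in particular $G$ satisfies a coset identity. At this point I would invoke \cite[Thm.~1.3]{BG}, which asserts that a compact subgroup of $\mathrm{GL}_n(K)$ satisfying a coset identity is virtually solvable. Taking the contrapositive over all words $w$ yields the desired dichotomy: either $P(G,w)=0$ for every $w$ (so $G$ is randomly free) or $G$ is virtually solvable.

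For the final assertion, let $G$ be a $p$-adic analytic pro-$p$ group. By Lazard's theory such a group embeds as a compact subgroup of $\mathrm{GL}_n(\bbZ_p)\subseteq \mathrm{GL}_n(\bbQ_p)$ for some $n$, so the first part applies with $K=\bbQ_p$ and gives that $G$ is either randomly free or virtually solvable. It remains to upgrade ``virtually solvable'' to ``solvable'' in the pro-$p$ setting. If $H\le G$ is a solvable subgroup of finite index, then its core $N=\bigcap_{g\in G} gHg^{-1}$ is an open normal subgroup of finite (hence $p$-power) index; $N$ is solvable as a subgroup of $H$, while $G/N$ is a finite $p$-group and therefore solvable. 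Being an extension of a solvable group by a solvable group, $G$ is solvable, which completes the argument.

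Since the substantive content is entirely contained in \cref{thm:analytic} and in \cite[Thm.~1.3]{BG}, I do not expect a genuine obstacle here: the corollary is a short assembly of known facts. The only points demanding a little care are the verification that a compact linear group over $K$ is \emph{$K$-analytic} rather than merely locally compact (so that \cref{thm:analytic} is applicable), and the elementary observation that a virtually solvable pro-$p$ group is solvable; both are standard.
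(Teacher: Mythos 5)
Your argument is correct and follows essentially the same route as the paper: apply \cref{thm:analytic} to convert a probabilistic identity into a coset identity, then invoke the topological Tits alternative of \cite[Thm.~1.3]{BG} (which the paper phrases as ``not virtually solvable implies a dense free subgroup, hence no coset identity'', the contrapositive of your formulation). Your added remarks --- that a compact subgroup of $\mathrm{GL}_n(K)$ is a $\sigma$-compact $K$-analytic group and that a virtually solvable pro-$p$ group is solvable --- are exactly the standard details the paper leaves implicit.
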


The proof of \cref{thm:analytic} is of analytic flavour and relies on the Weierstra\ss{} Preparation Theorem. Using \cref{cor:3.5} we can also give a new proof of \cite[Thm.~1.4]{LS16}, showing that a linear group satisfying a probabilistic identity must also satisfy a coset identity; see \cref{cor:general_LS_1.4}. Note that our argument also applies to probabilistic identities with parameters.

We now concentrate on randomly free groups. One of the first general methods to obtain randomly free groups appears in \cite{Ab05}, where it is proved that weakly branch groups are randomly free by analysing their actions on trees. In \cite{Sz05}, it is shown that also the Nottingham group $\mathcal{N}_p$ is randomly free by a different method. Using these results, it follows that free pro-$p$ groups are also randomly free for every odd prime and for $p=2$ and rank greater than 2. Interestingly, we do not know of a direct proof of this fact. As far as the authors are aware, the assertion that the free pro-$2$ group of rank $2$ is randomly free does not follow from previous results and requires \cref{vfpprobtits} below.

While it is clear that a group with a randomly free quotient is randomly free, it is not at all clear if a group which has a finite-index randomly free subgroup is itself randomly free. In \cref{sec:free_constr}, we prove a particular instance of that; namely, that virtually non-abelian free pro-$p$ groups are randomly free. Slightly more general, an application of  
 \cref{cor:3.5} gives:

\begin{thmABC}\label{vfpprobtits}
Let $G$ be a large pro-$p$ group. Then $G$ is randomly free.
\end{thmABC}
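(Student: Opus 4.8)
The plan is to prove the dichotomy by showing that whenever $G$ is \emph{not} virtually procyclic it admits a randomly free quotient, and then to invoke \cref{cor:3.5} together with the (clear) fact that a group with a randomly free quotient is itself randomly free.

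First I would reduce to a convenient normal subgroup. As $G$ is virtually free pro-$p$ it contains an open free pro-$p$ subgroup $H$. If $H$ has rank $\le 1$ then $H$ is procyclic and $G$ is virtually procyclic, placing us in the first alternative. Otherwise $H$ is non-abelian, and replacing $H$ by its normal core in $G$ I may assume there is an open \emph{normal} subgroup $N\trianglelefteq G$ that is free pro-$p$: the core is an open subgroup of $H$, hence free pro-$p$, and by the Nielsen--Schreier formula for pro-$p$ groups its rank only grows, so $N$ is free pro-$p$ of rank $d\ge 2$.

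The heart of the matter is to exhibit a continuous surjection from $N$ onto a compact $p$-adic analytic group that is not virtually solvable. Since $N$ is free pro-$p$, any choice of elements in a pro-$p$ group extends to a continuous homomorphism, so it suffices to find, inside a uniform congruence subgroup $U_0$ of $\operatorname{SL}_2(\bbZ_p)$, two elements $A,B$ whose logarithms generate $\mathfrak{sl}_2(\bbQ_p)$; then $U:=\overline{\langle A,B\rangle}$ is open in $\operatorname{SL}_2(\bbZ_p)$ (hence three-dimensional and not virtually solvable) and is topologically generated by two elements, so $N$ surjects onto $U$. Such $A,B$ exist because $\mathfrak{sl}_2$ is generated by two elements: taking $X$ diagonal and $Y$ off-diagonal so that $X,Y,[X,Y]$ are linearly independent and setting $A=\exp(p^cX)$, $B=\exp(p^cY)$ for $c$ large works, via the Lazard correspondence between closed subgroups of $U_0$ and closed Lie subalgebras of $\log U_0$. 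I expect this to be the main obstacle, both because it is the one place where the rank $2$ case is genuinely used (a free group of rank $\ge 3$ trivially surjects onto the three-generated congruence subgroup itself) and because the convergence of $\exp$ and the uniformity of $U_0$ require care when $p=2$, which is precisely the case not covered by earlier results.

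Finally I would promote this to a quotient of $G$ and conclude. Writing $L=\ker(N\to U)$ and $M=\bigcap_{g\in G}gLg^{-1}$ for its normal core in $G$, the group $N/M$ embeds into the finite product $\prod_{g}N/gLg^{-1}$ of copies of $U$, hence is compact $p$-adic analytic, and it still surjects onto $U$, so it is not virtually solvable. Since $M\trianglelefteq G$ and $N/M$ has finite index in $G/M$, the quotient $G/M$ is a finite extension of a compact $p$-adic analytic group and therefore itself compact $p$-adic analytic, and it is not virtually solvable because $N/M$ is not. Being compact $p$-adic analytic, $G/M$ is linear over $\bbQ_p$, so \cref{cor:3.5} shows that $G/M$ is randomly free; as $G/M$ is a quotient of $G$, the group $G$ is randomly free as well. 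This yields the second alternative and completes the dichotomy.
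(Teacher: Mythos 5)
Your proof is correct and follows the same overall route as the paper: exhibit a continuous surjection from the free open subgroup onto a compact $p$-adic analytic group that is not virtually solvable, promote this to a representation/quotient of $G$ with non-virtually-solvable $p$-adic analytic image, and conclude via \cref{cor:3.5} and \cref{cor:quot_rand_free}. The two places where your implementation differs are worth noting. First, for the target group: you insist on a $2$-generated non-solvable example and build it by hand inside $\mathrm{SL}_2(\bbZ_p)$ via exponentials of Lie algebra generators (correct, and it does handle $p=2$), whereas the paper sidesteps the need for a $2$-generated example entirely by using the Schreier formula to pass to a deeper open free subgroup whose rank exceeds $d(H)$ for an arbitrary chosen non-solvable $p$-adic analytic pro-$p$ group $H$; since $G$ is still virtually that deeper subgroup, no explicit construction is needed. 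Second, for the promotion step: you take the normal core $M$ of the kernel and embed $N/M$ diagonally into a finite product of copies of $U$, while the paper packages the same idea as induction of $\bbZ_p$-representations (\cref{inducedrepresentationlemma}); the induced representation of $G$ has kernel essentially equal to your $M$, so these are two phrasings of one construction. Your version is more self-contained but requires the explicit $\mathrm{SL}_2$ computation; the paper's version is shorter at the cost of quoting the representation-theoretic lemma and the Schreier formula.
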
 

Recall that a pro-$p$ group is \emph{large}, if some open subgroup projects onto a non-abelian free pro-$p$ group. \cref{vfpprobtits} allows us to deduce that non-solvable Demushkin pro-$p$ groups and that several pro-$p$ groups obtained as fundamental groups of finite graphs of pro-$p$ groups, for example various amalgamated free pro-$p$ products, pro-$p$ HNN-extensions and pro-$p$ analogues of limit groups, are randomly free. For some of the aforementioned classes of pro-$p$ groups, we can also prove that they satisfy a probabilistic Tits alternative as stated in the following theorem.

\begin{thmABC}\label{thm:list}
    Let $\mathcal{G}$ one of the following classes of pro-$p$ groups:
    \begin{enumerate}[(i)]
        \item finitely generated Demushkin groups,
        \item non-trivial free products of pro-$p$ groups,
        \item pro-$p$ analogues of limit groups (in the sense of \cite{KZ}),
    \end{enumerate}
    and let $G$ be a group in $\mathcal{G}$. Then $G$ is either solvable or randomly free.
\end{thmABC}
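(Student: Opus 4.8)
The plan is to deduce all three cases from \cref{vfpprobtits} (and \cref{cor:3.5}) by means of two soft permanence principles: a pro-$p$ group with a randomly free quotient is itself randomly free, and a virtually procyclic pro-$p$ group is solvable. The key auxiliary fact is that a free pro-$p$ product $\bar{A}\ast\bar{B}$ of two \emph{finite} $p$-groups is virtually free pro-$p$, since it acts on a pro-$p$ tree with finite vertex stabilizers and trivial edge stabilizers and hence contains an open free pro-$p$ subgroup; consequently, by \cref{vfpprobtits}, $\bar A\ast\bar B$ is randomly free unless it is virtually procyclic. Which of the two alternatives occurs is detected by the Euler characteristic $\chi(\bar A\ast\bar B)=\tfrac1{|\bar A|}+\tfrac1{|\bar B|}-1$, which is $\le 0$, is strictly negative exactly when a torsion-free open subgroup is free of rank $\ge 2$, and vanishes (the virtually procyclic case) precisely when $\bar A\cong\bar B\cong C_2$.

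I would prove (ii) first, as it feeds into the other two cases. A nontrivial free pro-$p$ product $A\ast B$ surjects onto $\bar A\ast\bar B$ for any finite quotients $\bar A,\bar B$ of its factors. Unless $A\cong B\cong C_2$ (which forces $p=2$), one can choose such quotients with $\{|\bar A|,|\bar B|\}\ne\{2,2\}$: for odd $p$ the choice $\bar A=\bar B=C_p$ works, while for $p=2$ any factor different from $C_2$ surjects onto a group of order $4$. Then $\bar A\ast\bar B$ is virtually nonabelian free, hence randomly free by \cref{vfpprobtits}, and therefore so is $A\ast B$. In the excluded case $A\ast B=C_2\ast C_2$ is the pro-$2$ infinite dihedral group, which is virtually procyclic and thus solvable; this is the unique solvable instance.

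For (i) I would invoke Labute's classification of finitely generated Demushkin groups $G=F/\langle\!\langle r\rangle\!\rangle$. If $d(G)\le 2$ then $G$ is metabelian, hence solvable (and, being $p$-adic analytic, it is also covered by \cref{cor:3.5}). If $d(G)\ge 3$, I would use the explicit shape of the single relator---a $p$-power of one generator times a product of commutators---and kill a suitable subset of the generators: deleting one entry of each commutator while retaining at least two generators produces a quotient of $G$ isomorphic either to a free pro-$p$ group of rank $\ge 2$ or to a nontrivial free pro-$p$ product such as $C_{p^f}\ast\bbZ_p$. By case (ii) and \cref{vfpprobtits} this quotient is randomly free, whence so is $G$; note that the same construction exhibits $G$ as non-solvable when $d(G)\ge 3$.

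Case (iii) is where I expect the real work. Here I would appeal to the structure theory of pro-$p$ limit groups developed in \cite{KZ}: a non-abelian such group admits a nontrivial decomposition either as a free pro-$p$ product or as an amalgamated pro-$p$ product over an abelian (procyclic) subgroup, from which one extracts a quotient of the form handled in (i)/(ii) or a virtually free pro-$p$ quotient. The main obstacle is precisely this extraction: translating the hierarchical description of \cite{KZ} into a concrete randomly free quotient, while ruling out the solvable (abelian) case, requires genuine use of the tree-theoretic machinery for pro-$p$ groups. The analogous bookkeeping in the $p=2$, odd-rank families of Labute's list in (i)---verifying that the relator can always be collapsed to a non-procyclic free product or free quotient---is a milder version of the same difficulty.
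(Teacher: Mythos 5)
Your treatment of cases (i) and (ii) is essentially the paper's: both arguments run by surjecting onto a virtually non-abelian free pro-$p$ group and invoking \cref{vfpprobtits} together with \cref{cor:quot_rand_free}, with $C_p\amalg C_p$ doing the work for odd $p$ and a factor of order $\geq 4$ rescuing the case $p=2$; your Euler-characteristic bookkeeping $(\lvert\bar A\rvert-1)(\lvert\bar B\rvert-1)$ is just a cleaner packaging of the Kurosh/Schreier count the paper uses, and your workaround for the $d(G)=3$, $p=2$ Demushkin relator $x^2y^{2^f}[y,z]$ (map onto $C_2\amalg\bbZ_2$ and then onto $C_2\amalg C_4$, since by the reference cited in the paper there is no surjection onto a non-abelian free pro-$2$ group) is exactly what the paper does. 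Two small points: the paper's case (ii) allows free products over a profinite indexing space with more than two points, which your two-factor argument covers only after regrouping; and for $d(G)=2$ the paper argues via $p$-adic analyticity of dimension $2$ rather than "metabelian", but both are fine.

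Case (iii), however, is not proved in your proposal — you explicitly defer "the real work", and that work is the bulk of the paper's argument. Concretely, the paper does not reduce to a single splitting as a free or amalgamated product; it invokes the decomposition of a pro-$p$ limit group as the fundamental pro-$p$ group of a \emph{finite graph} of pro-$p$ groups with procyclic or trivial edge groups, reduces to an injective and reduced graph of groups, and then proves a general criterion (\cref{cor:graphs}) producing a randomly free quotient whenever the quotients $\tilde{\mathcal G}(v)$ of the vertex groups by the normal closures of incident edge images are not all degenerate. The residual cases that this criterion misses are precisely where the difficulty you anticipate lives: trees with exactly two pending vertices when $p=2$ (where $C_2\amalg C_2$ being solvable blocks the naive quotient), and circuits where every vertex group is generated by its incident edge groups. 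The paper disposes of these using \cref{lem:index_normal_closure} (normal closures of procyclic subgroups have index $\geq 3$ in non-procyclic members of $\mathcal L$), the commutative-transitivity of pro-$p$ limit groups, the fact that their $2$-generated subgroups are either free of rank $2$ or abelian, and properness of the relevant amalgams over procyclic subgroups — none of which appears in your sketch. Note also that the correct conclusion in (iii) is "abelian or randomly free" (\cref{cor:pro-p_limit_groups}), which is what the commutative-transitive property delivers in the all-$\bbZ_p^2$ configurations; "solvable" alone would not be reached by simply quoting (i)/(ii) on a quotient. So the gap is real: you have the right target but not the extraction mechanism.
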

Note that there are examples of pro-$p$ groups for which a probabilistic Tits alternative does not hold; see \cite[Corollary 10]{CM07} for an example of a non-solvable finitely generated pro-$p$ group which satisfies an identity.

 It seems worth noting that virtually free pro-$p$ groups and the groups considered in Theorem~\ref{thm:list} have positive rank gradient. It would be interesting to know whether all such pro-$p$ groups are randomly free (see \cref{prob:rank_grad}). We remark that a finitely generated pro-$p$ group of positive rank gradient cannot satisfy a coset identity by \cite[Thm.~1.7]{Ni}. See the end of \cref{sec:free_constr} for a detailed discussion.

We want to emphasise that torsion identities hold a special place in the world of identities due to historical reasons (cfr.\ Burside problem). We remark also that \cref{thm:analytic} answers \cite[Conjecture 3]{LP00} positively for compact subgroups of $\mathrm{GL}_n(K)$ where $K$ is a local field. It is therefore worth having other techniques to deal with torsion identities in other groups. In \cref{sec:torsion} we use Lie-theoretic methods to give a different proof of some of the previous results for torsion identities. In \cref{subsec:virt_free_torsion}, we study the measure of conjugacy classes in profinite groups and we show that the set of torsion elements has measure zero in virtually free pro-$p$ groups. We believe that the technique developed there can be applied to other classes of groups. 

Finally, in \cref{subsec:p-adic_torsion}, we study torsion identities in compact $p$-adic analytic groups. In \cite{FR25}, the authors ask if finitely generated pro-$p$ groups which are not $p$-adic analytic can have the set of torsion of elements of positive measure. Here, we tackle the `dual' question; namely, using \cref{thm:analytic} and Lie-theoretic methods, we obtain necessary and sufficient conditions for a $p$-adic analytic pro-$p$ group $G$ to have the set of torsion elements of positive measure.
\begin{corABC}\label{cor:p-adic_pro-p_torsion}
    Let $G$ be a $p$-adic analytic pro-$p$ group and $\mu$ its normalised Haar measure. Then $\mu(\mathrm{Tor}(G)) > 0$ if, and only if, $G$ is solvable and there exists an open uniform pro-$p$ subgroup $U$ of $G$ and $t \in \mathrm{Tor}(G)$ such that $C_U(t)=1$.
\end{corABC}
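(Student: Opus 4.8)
The plan is to reduce the statement to the power word $x^{p^n}$ and then to linearise via the Lie correspondence on a uniform subgroup. Since $G$ is pro-$p$, every torsion element has $p$-power order, so $\mathrm{Tor}(G)=\bigcup_{n\ge 1}T_n$ with $T_n=\{x\in G: x^{p^n}=1\}$ a nested increasing family of closed sets; by continuity of $\mu$, one has $\mu(\mathrm{Tor}(G))>0$ if and only if $\mu(T_n)>0$ for some $n$, i.e.\ if and only if $x^{p^n}$ is a probabilistic identity for some $n$. Throughout I will work on an open normal uniform subgroup $U\trianglelefteq G$ (which exists since $G$ is $p$-adic analytic): writing $L=\log U$ and $\exp\colon L\to U$ for the associated bijection, any $t\in G$ normalising $U$ acts on $U$ by the conjugation automorphism $\phi=c_t$, with linearisation $\Phi=\mathrm{Ad}(t)|_L$, and one has the identification $\mathrm{Lie}(C_U(t))=\ker(\Phi-\mathrm{id})$.

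For the direct implication, suppose $\mu(\mathrm{Tor}(G))>0$. Then $x^{p^n}$ is a probabilistic identity for some $n$, so $G$ is not randomly free and \cref{cor:3.5} yields that $G$ is solvable. Moreover \cref{thm:analytic} (applied over $K=\mathbb{Q}_p$) shows that $x^{p^n}$ is an \emph{open} coset identity; shrinking the relevant open subgroup to a normal uniform $U\trianglelefteq G$, I obtain a torsion element $t$ with $t^{p^n}=1$ such that $(tu)^{p^n}=1$ for every $u\in U$. Expanding the power gives $(tu)^{p^n}=t^{p^n}\,\prod_{j=0}^{p^n-1}\phi^{-j}(u)=N(u)$, so the analytic map $N\colon U\to U$ is identically trivial. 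Passing to logarithms and comparing lowest-order terms via the Baker--Campbell--Hausdorff formula forces the linear part $\sum_{j=0}^{p^n-1}\Phi^{-j}$ to vanish on $L\otimes\mathbb{Q}_p$. Since $\Phi$ has finite order a power of $p$, it is diagonalisable over $\overline{\mathbb{Q}_p}$ with eigenvalues roots of unity, and $\sum_{j}\Phi^{-j}$ acts as multiplication by $p^n$ on the $\Phi$-fixed space and by $0$ elsewhere; vanishing of this operator forces the fixed space $\ker(\Phi-\mathrm{id})$ to be zero. Hence $C_U(t)$ is finite, and since $U$ is torsion-free I conclude $C_U(t)=1$.

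For the converse, assume $G$ is solvable and that there exist an open uniform $U$ and $t\in\mathrm{Tor}(G)$ with $C_U(t)=1$. Replacing $U$ by a smaller open \emph{normal} uniform subgroup $U'\trianglelefteq G$ contained in it preserves $C_{U'}(t)\le C_U(t)=1$, so I may assume $U\trianglelefteq G$ and that $t$ normalises $U$. Every $U$-conjugate $vtv^{-1}$ with $v\in U$ lies in the coset $tU$ and is torsion of the same order as $t$; writing $vtv^{-1}=t\,\delta(v)$ with $\delta(v)=\phi^{-1}(v)v^{-1}\in U$, the map $\delta\colon U\to U$ is analytic with derivative $\Phi^{-1}-\mathrm{id}$ at the identity. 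Because $C_U(t)=1$ means that $1$ is not an eigenvalue of $\Phi$, this derivative is invertible, so by the $p$-adic inverse function theorem $\delta$ maps a neighbourhood $V$ of $1$ onto an open subset of $U$. Translating by $t$, the open set $t\,\delta(V)$ consists entirely of torsion elements, whence $\mu(\mathrm{Tor}(G))\ge \mu(t\,\delta(V))>0$.

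I expect the main obstacle to be the Lie-theoretic bookkeeping in the direct implication: one must justify carefully that the identically-trivial analytic norm map $N$ forces the \emph{exact} vanishing of the linear operator $\sum_j\Phi^{-j}$ (rather than merely a congruence modulo $p$), and then translate the resulting triviality of the $\Phi$-fixed space back into the group-theoretic statement $C_U(t)=1$ through the identification $\mathrm{Lie}(C_U(t))=\ker(\Phi-\mathrm{id})$ together with torsion-freeness of $U$. By contrast, the measure-to-coset-identity passage is handled cleanly by \cref{thm:analytic}, and the converse reduces to the $p$-adic inverse function theorem applied to the $U$-conjugacy map of $t$.
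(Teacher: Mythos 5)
Your argument is correct and follows essentially the same route as the paper, which obtains this corollary as a direct consequence of \cref{torsionpadic} (together with \cref{cor:krekninpadic} for solvability): \cref{thm:analytic} converts positive torsion measure into a coset identity $x^{p^n}$ on a coset $tU$ of a uniform subgroup, and your converse is exactly the paper's argument via invertibility of the differential of the conjugation--displacement map and the inverse function theorem. The only difference is in how you extract $C_U(t)=1$ in the forward direction: your detour through the vanishing of the linearised norm map $\sum_{j}\Phi^{-j}$ and its eigenvalues is valid but unnecessary, since for $u\in C_U(t)$ one has $1=(tu)^{p^n}=t^{p^n}u^{p^n}=u^{p^n}$ and hence $u=1$ by torsion-freeness of $U$, which is how the paper concludes.
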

This is a direct consequence of the more general \cref{torsionpadic}. Moreover, even though having torsion of positive measure can only occur in the solvable case due to \cref{cor:3.5}, in \cref{cor:krekninpadic} we give another proof of it. In an ongoing work, we consider the problem of classifying which solvable $p$-adic analytic pro-$p$ groups can have torsion of positive measure.

\subsection*{Notational conventions}

 As it is customary when working with profinite groups, all subgroups will be assumed to be closed and all homomorphisms will be assumed to be continuous, unless stated otherwise. For a group $G$, we write $\mathrm{Tor}(G)$ to denote its set of torsion elements.

\subsection*{Acknowledgements} 
S.K.\ acknowledges funding by the Deutsche Forschungsgemeinschaft (DFG, German
Research Foundation) – 541392601. N.O.\ thanks Henrique Souza and Julian Wykowski for helpful discussions on $p$-adic analytic groups and profinite Bass-Serre theory. T.T. and T.W.\ are members of the Gruppo Nazionale per le Strutture Algebriche, Geometriche e le loro Applicazioni (GNSAGA) of the Istituto Nazionale di Alta Matematica (INdAM). 
T.W. gratefully acknowledges financial support by
the PRIN2022 “Group theory and its applications”. M.V.\ is funded by the Italian program Rita Levi Montalcini for young researchers, Edition 2021. T.T.\ and M.V.\ were also supported by the Spanish Government grant PID2020-117281GB-I00, partly by the European Regional Development Fund (ERDF), the MICIU /AEI /10.13039/501100011033 / UE grant
PCI2024-155053-2. This work was partially supported by the Basque Government grant IT1483-22.

\section{Preliminaries}

In this section, we recall the necessary definitions, notations and basic results that we will need throughout the paper. 

\subsection{Probabilistic identities} 
A profinite group $G$, being a compact Hausdorff topological group, admits a unique Haar measure $\mu_G$ defined on the $\sigma$-algebra generated by open subsets of $G$ and such that $\mu_G(G)=1$. Let $w = w(x_1,\ldots,x_r)$ be a non-trivial word in the free group of rank $r$ and define, for a profinite group $G$, the associated word map 
\[
\begin{matrix}
	w_G \colon &  G^r  & \longrightarrow & G\\
	 & (g_1,\dots, g_r) & \longmapsto & w(g_1,\dots, g_r).
\end{matrix}
\]
Since $w_G$ is continuous, the subset \[ X(G,w)=\{(g_1,\dots,g_r)\in G^r\mid w(g_1,\dots,g_r)=1\} \]
is closed in $G^r$, hence measurable, and we call $P(G,w):=\mu_{G^r}(X(G,w))$ the probability that $G$ satisfies $w$. 
\begin{definition}
	Let $G$ be a profinite group and let $w$ be a non-trivial word.  We say that $w$ is a \emph{probabilistic identity} on $G$ if $P(G,w)>0$. We say that $G$ is \emph{randomly free} if it does not satisfy any probabilistic identity.
\end{definition}

We remark that the definition of randomly free given above is equivalent to saying that, for every $n\geq 1$, the probability that an $n$-tuple of elements of $G$ generates an abstract free subgroup of rank $n$ is 1 (\cite[Lemma 1.4]{LS18}).

Further, we say that $w = w(x_1,\ldots,x_r)$ is a \emph{coset identity} if there exists $H \leq_o G$ and elements $g_1,\ldots,g_r \in G$ such that $w(g_1 h_1,\ldots,g_r h_r) = 1$ for any $h_1,\ldots,h_r \in H$. Note that every coset identity is a probabilistic identity, since $g_1H \times \ldots \times g_rH \subseteq X(G,w)$ implies $P(G,w) \geq 1/[G:H]^r>0$. 

Note that $w=w(x_1,\dots,x_r)$ is a coset identity on a residually finite group $\Gamma$ if and only if it is a coset identity on its profinite completion $\widehat{\Gamma}$. Indeed, if $H$ is a finite index subgroup of $\Gamma$ and $g_1,\dots,g_r\in\Gamma$ such that $w(g_1h_1,\dots,g_rh_r)=1$ for every $h_1,\dots,h_r\in H$, then the closure $\overline{H}$ of $H$ in $\widehat{\Gamma}$ is an open subgroup of $\widehat{\Gamma}$ and $w(g_1\bar{h}_1,\dots,g_r\bar{h}_r)=1$ for every $\bar{h}_1,\dots,\bar{h}_r\in\overline{H}$. Conversely, let $H\leq_o\widehat{\Gamma}$ and let $g_1,\ldots,g_r \in G$ such that $w(g_1 h_1,\ldots,g_r h_r) = 1$ for any $h_1,\ldots,h_r \in H$. Since $\Gamma$ is dense in its profinite completion, $g_iH\cap\Gamma$ is non-empty and open in the profinite topology of $\Gamma$ for every $i=1,\dots,r$. Therefore, there is $N\leq \Gamma$ of finite index and $\gamma_i\in\Gamma$ such that $\gamma_iN\subseteq g_iH$ so that $w$ is a coset identity on $\Gamma$.   

We collect in the following lemma some useful properties of the Haar measure for profinite groups. Note that \cref{eq:leqquotient} was already known for countably based profinite groups, see for example \cite[Lemma 11.1.1]{LS03}.

\begin{lemma}\label{quotient}
	   	Let $G$ be a profinite group and let $X$ be a closed subset of $G$. If $K\unlhd G$ and $\pi\colon G\to G/K$ is the natural projection, then \begin{equation}\label{eq:leqquotient}
	   		\mu_G(X)\leq\mu_{G/K}(\pi(X)).
	   	\end{equation}
	   	 If moreover $G$ is countably based, then 
         \begin{equation}\label{eq:measurefinitequotients}
     	\mu_G(X)=\underset{N\unlhd_oG}{\mathrm{inf}}\frac{|XN/N|}{|G/N|}=\underset{i\geq 1}{\mathrm{inf}}\frac{|XN_i/N_i|}{|G/N_i|},
     \end{equation}
     for every $\{N_i\}_{i\geq 1}$ descending chain of open normal subgroups of $G$ with trivial intersection.
\end{lemma}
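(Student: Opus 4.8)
The plan is to deduce both statements from how the Haar measure transforms under the quotient maps $\pi_N\colon G\to G/N$, together with continuity of the measure from above.

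For the inequality \eqref{eq:leqquotient} I would invoke the uniqueness of the normalized Haar measure. Since $\pi\colon G\to G/K$ is a continuous surjective homomorphism of compact groups, the pushforward $\pi_*\mu_G$ is a left-invariant Borel probability measure on $G/K$, hence by uniqueness $\pi_*\mu_G=\mu_{G/K}$. As $X$ is closed in the compact group $G$ it is compact, so $\pi(X)$ is compact, hence measurable, and $\pi^{-1}(\pi(X))=XK\supseteq X$. Therefore
\[
\mu_{G/K}(\pi(X))=(\pi_*\mu_G)(\pi(X))=\mu_G(\pi^{-1}(\pi(X)))=\mu_G(XK)\geq\mu_G(X),
\]
which is precisely \eqref{eq:leqquotient} and needs no countability hypothesis.

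For \eqref{eq:measurefinitequotients} I would first rewrite the finite-quotient ratios measure-theoretically. For $N\unlhd_o G$ the finite group $G/N$ carries the uniform probability measure, so $\mu_{G/N}(\pi_N(X))=|XN/N|/|G/N|$; moreover $\pi_N^{-1}(\pi_N(X))=XN$, so the pushforward identity $(\pi_N)_*\mu_G=\mu_{G/N}$ now yields the \emph{equality} $\mu_G(XN)=|XN/N|/|G/N|$. Applying \eqref{eq:leqquotient} gives $\mu_G(X)\leq|XN/N|/|G/N|$ for every $N$, whence $\mu_G(X)\leq\inf_{N\unlhd_oG}|XN/N|/|G/N|$. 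It remains to produce a matching lower bound. The key point is that $\bigcap_i XN_i=X$ for any descending chain $\{N_i\}$ of open normal subgroups with $\bigcap_i N_i=1$. Such a chain is a neighbourhood basis of the identity: for open $U\ni 1$ the compact sets $N_i\cap(G\setminus U)$ decrease to the empty set, so by the finite intersection property $N_i\subseteq U$ for some $i$. Granting this, if $g\in\bigcap_i XN_i$ then for each $i$ there is $x_i\in X$ with $g^{-1}x_i\in N_i$, so $x_i\to g$; since $X$ is closed, $g\in X$, while $X\subseteq XN_i$ is trivial. As the $XN_i$ are clopen, decreasing and of finite measure, continuity from above gives $\mu_G(XN_i)\to\mu_G(X)$, and being a decreasing sequence this limit is the infimum. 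Combined with $\mu_G(XN_i)=|XN_i/N_i|/|G/N_i|$ this proves the rightmost equality, and then $\inf_{N}|XN/N|/|G/N|\leq\inf_i|XN_i/N_i|/|G/N_i|=\mu_G(X)\leq\inf_N|XN/N|/|G/N|$ forces all three quantities to coincide.

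The main obstacle is the identity $\bigcap_i XN_i=X$: this is where compactness of the closed set $X$ is essential (it can fail for a merely measurable set), and it is also the reason the countable-basis hypothesis appears, since it lets me replace the net of all open normal subgroups by a decreasing sequence so that the standard sequential continuity of the measure from above applies. For a general profinite $G$ one retains only the inequality \eqref{eq:leqquotient}.
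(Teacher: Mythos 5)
Your proof is correct and, for the first inequality, follows essentially the same route as the paper: both rest on the pushforward identity $\pi_*\mu_G=\mu_{G/K}$ applied to the compact set $\pi(X)$, the only difference being that the paper cites this identity from Halmos while you derive it from uniqueness of the normalized Haar measure. For the second part the paper simply cites Lubotzky--Segal (p.~206), whereas you supply a complete self-contained argument -- the identity $\bigcap_i XN_i=X$ for a descending chain forming a neighbourhood basis of the identity, together with continuity of the measure from above on the clopen sets $XN_i$ of measure $|XN_i/N_i|/|G/N_i|$ -- which is a correct filling-in of that citation and correctly isolates where closedness of $X$ and the countable-basis hypothesis are used.
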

\begin{proof}
	It is well known, for example by \cite[Section 63, Theorem C]{H50}, that the normalized Haar measure of the quotient group $G/K$ is the pushforward of $\mu_G$ by $\pi$, that is $\mu_{G/K}(E)=\mu_G(\pi^{-1}(E))$ for every Borel subset $E\subseteq G/K$. If $X\subseteq G$ is closed, it is in particular compact and so $\pi(X)$ is a closed hence measurable subset of $G/K$. So we have
	\[\mu_{G/K}(\pi(X)) = \mu_G(\pi^{-1}(\pi(X))) \geq \mu(X),\]
	since $X \subseteq \pi^{-1}(\pi(X))$. \cref{eq:measurefinitequotients} follows from \cite[p.~206]{LS03}.
\end{proof}
If $w$ is a non-trivial word, since $\pi (X(G,w))\subseteq X(G/K,w)$, \cref{eq:leqquotient} implies that $P(G,w)\leq P(G/K,w)$. This observation immediately implies the following corollary, which will be frequently used in the rest of the article. 

\begin{corollary}\label{cor:quot_rand_free}
A profinite group that projects onto a randomly free group is itself randomly free.     
\end{corollary}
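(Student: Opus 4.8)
The plan is to reduce the statement directly to the inequality $P(G,w)\le P(G/K,w)$ recorded immediately after \cref{quotient}. Suppose $\pi\colon G\to Q$ is a continuous surjection of profinite groups with $Q$ randomly free, and set $K=\ker\pi$, so that $Q\cong G/K$. I want to show that $G$ satisfies no probabilistic identity, i.e.\ that $P(G,w)=0$ for every non-trivial word $w$.

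Fix such a $w=w(x_1,\dots,x_r)$. The first step is to observe that the product map $\pi^r\colon G^r\to (G/K)^r$ carries $X(G,w)$ into $X(G/K,w)$: if $w(g_1,\dots,g_r)=1$ in $G$, then applying $\pi$ and using that word maps commute with continuous homomorphisms gives $w(\pi(g_1),\dots,\pi(g_r))=1$ in $G/K$. Since $(G/K)^r\cong G^r/K^r$ is a quotient of $G^r$ by the closed normal subgroup $K^r$, \cref{eq:leqquotient} applied to the closed set $X(G,w)\subseteq G^r$ then yields $P(G,w)=\mu_{G^r}(X(G,w))\le\mu_{(G/K)^r}(\pi^r(X(G,w)))\le\mu_{(G/K)^r}(X(G/K,w))=P(G/K,w)$.

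Because $Q\cong G/K$ is randomly free, the right-hand side vanishes, so $P(G,w)=0$. As $w$ was an arbitrary non-trivial word, $G$ is randomly free, as required. Since all the analytic content is already packaged in \cref{quotient}, there is no genuine obstacle here; the only points requiring (routine) care are the equivariance $\pi^r(X(G,w))\subseteq X(G/K,w)$ and the identification of $(G/K)^r$ with the quotient $G^r/K^r$, so that \cref{eq:leqquotient} may be applied on the $r$-fold power rather than on $G$ itself.
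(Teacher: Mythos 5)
Your argument is correct and is exactly the paper's: the inequality $P(G,w)\le P(G/K,w)$ is derived from \cref{eq:leqquotient} applied on $G^r$ (with kernel $K^r$) together with the containment $\pi^r(X(G,w))\subseteq X(G/K,w)$, and the corollary follows since the right-hand side vanishes for a randomly free quotient. The paper records precisely this observation in the sentence immediately preceding the corollary, so there is nothing to add.
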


\subsection{\texorpdfstring{$K$}{K}-analytic groups}

Let $K$ be a non-archimedean local field, with norm $\lvert \cdot \rvert$, and let $R$ be its ring of integers. Then $R$ is a compact discrete valuation ring and we fix an uniformiser $\pi \in R$ that generates its maximal ideal $\mathfrak m$. Note that, for $x \in K\smallsetminus \{0\}$, then $|x| = d^{k}$, where $d \in (0,1)$ and $k$ is the integer such that $x \in \pi^{k}R\smallsetminus \pi^{k+1}R$.

We follow the exposition in \cite[Pt.~2]{Ser65}. For $r \in \mathbb R^n_{>0}$ and $x \in K^n$, by $|x| \leq r$ we mean that $|x_i|\leq r_i$ for each $1\leq i \leq n$. For an open subset $U$ of $K^{m}$, we say that a function $f : U \to K^{n}$ is \emph{analytic} if, for each $x \in U$, there exists $r \in \mathbb R_{>0}^m$ and power series $F_i \in R\llbracket X_1,\ldots,X_m \rrbracket$, with $i=1,\ldots,n$ such that $B(x,r)=x+\{y \in K^m : |y| < r\} \subseteq U$ and $f(x+y) = (F_1(y),\ldots,F_n(y))$ for all $y \in B(0,r)$.

Furthermore, if $X$ is a topological space, a \emph{chart} on $X$ is a triple $(U,\phi,n)$ where $U$ is an open subset of $X$ and $\phi$ is a homeomorphism of $U$ into $K^{n}$ for a positive integer $n$. We say that $X$ is a \emph{$K$-analytic manifold} if, for each $x \in X$, there exists a chart $(U,\phi,n)$ such that $x \in U$ and, moreover, any two such charts are compatible; if $(V,\psi, m)$ is another chart such that $U \cap V \neq \varnothing$, then the maps $\psi \circ \phi^{-1}|_{\phi(U\cap V)}$ and $\phi \circ \psi^{-1}|_{\psi(U\cap V)}$ are analytic. Such a collection of compatible charts is called an \emph{atlas} on $X$ and its equivalence class is a \emph{$K$-analytic structure} on $X$.

Let $X$ and $Y$ be $K$-analytic manifolds and consider $f \colon X \to Y$. We say that $f$ is \emph{analytic} if $f$ is continuous and there exist atlases of $X$ and $Y$ such that, if $(U,\phi,n)$ and $(V,\psi,m)$ are charts on $X$ and $Y$, respectively, then, setting $W=U \cap f^{-1}(V)$, the function
\[\phi(W) \overset{\phi^{-1}}{\longrightarrow} U \overset{f}{\longrightarrow} V \overset{\psi}{\longrightarrow} \psi(V)\]
is analytic. 

\begin{definition}
    Let $G$ be a topological group with a $K$-analytic structure. We say that $G$ is a \emph{$K$-analytic group} if the functions $(x,y) \mapsto xy$ and $x \mapsto x^{-1}$ are analytic.
\end{definition}

The most ubiquitous examples of such groups are $p$-adic analytic groups; viz.\ when $K=\bbQ_p$ and $R=\bbZ_p$ for a prime $p$. For instance, the groups ${\rm{GL}}_n(\bbQ_p)$ and ${\rm{GL}}_n(\bbZ_p)$ are examples of non-compact and compact $p$-adic analytic groups, respectively. For the convenience of the reader, we also recall the following result which will be useful later in order to deal with analytic functions.

\begin{thm}[{{Weierstra\ss{} Preparation Theorem \cite[Ch. VII \S 3, Prop.~6]{bourbaki1972}}}]\label{thm:weierstrass}
    Let $(R,\mathfrak m)$ be a complete local ring. Suppose $f = \sum_{i\geq 0} a_iX^i \in R\llbracket X \rrbracket$ is non-zero and that there is a positive integer $n \geq 1$ such that $a_i \in \mathfrak m$ for $i < n$ and $a_n \in R\smallsetminus \mathfrak m$. Then there exist an invertible power series $u \in R \llbracket X \rrbracket$ and a polynomial $p (X) = X^n+b_{n-1}X^{n-1}+\ldots+b_0$, with $b_i 
    \in \mathfrak m$, such that $f=pu$.
\end{thm}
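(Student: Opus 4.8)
The plan is to derive the theorem from the \emph{Weierstra\ss{} Division Theorem}: for every $g\in R\llbracket X\rrbracket$ there exist unique $q\in R\llbracket X\rrbracket$ and $r\in R[X]$ with $\deg r<n$ such that $g=qf+r$. Granting this, I would apply it to $g=X^n$, writing $X^n=qf+r$ and setting $p:=X^n-r$, a monic polynomial of degree $n$ with $p=qf$. Let $k:=R/\mathfrak m$ be the residue field and denote reduction modulo $\mathfrak m$ by a bar. Since $a_i\in\mathfrak m$ for $i<n$ while $a_n\in R\smallsetminus\mathfrak m$ is a unit, one has $\overline f=X^n\overline v$ with $\overline v:=\overline{a_n}+\overline{a_{n+1}}X+\cdots$ a unit of $k\llbracket X\rrbracket$. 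Reducing $p=qf$ gives $X^n-\overline r=X^n(\overline q\,\overline v)$, and comparing coefficients degree by degree forces $\overline q\,\overline v=1$ and $\overline r=0$. The first shows that $q$ has unit constant term, hence is invertible in $R\llbracket X\rrbracket$; the second shows $r\in\mathfrak m[X]$, so $p=X^n+b_{n-1}X^{n-1}+\cdots+b_0$ with all $b_i\in\mathfrak m$. Setting $u:=q^{-1}$ then yields $f=up=pu$, as required.

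It remains to prove the division theorem, which is where the actual work lies. For the construction I would decompose each $h=\sum_j c_jX^j\in R\llbracket X\rrbracket$ as $h=h_{<n}+X^n\widehat h$, where $h_{<n}:=\sum_{j<n}c_jX^j$ and $\widehat h:=\sum_{j\ge 0}c_{n+j}X^j$; both $h\mapsto h_{<n}$ and $h\mapsto\widehat h$ are $R$-linear and $\widehat{X^n h}=h$. By hypothesis $f_{<n}\in\mathfrak m[X]$ and $v:=\widehat f$ is a unit. The requirement $\deg(g-qf)<n$ is exactly $\widehat{g-qf}=0$, and expanding $qf=q\,f_{<n}+X^n\,qv$ and applying the tail operator turns this into the fixed-point equation
\[
q=v^{-1}\bigl(\widehat g-\widehat{q\,f_{<n}}\bigr).
\]
I would solve it by the iteration $q_0:=v^{-1}\widehat g$ and $q_{k+1}:=v^{-1}\bigl(\widehat g-\widehat{q_k\,f_{<n}}\bigr)$. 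Because $f_{<n}\in\mathfrak m[X]$, multiplication by $f_{<n}$ raises the $\mathfrak m$-adic level of every coefficient by one, while neither the tail operator nor multiplication by the unit $v^{-1}$ lowers it; hence $q_{k+1}-q_k\in\mathfrak m^{k+1}R\llbracket X\rrbracket$ by induction. As $R$ is complete, each coefficient sequence converges, so $q:=\lim_k q_k$ exists in $R\llbracket X\rrbracket$, satisfies the fixed-point equation, and $r:=g-qf$ is then a polynomial of degree $<n$.

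For uniqueness it suffices to show that $q''f=s$ with $\deg s<n$ forces $q''=0$. The same computation gives $q''=-v^{-1}\widehat{q''f_{<n}}$, so if $q''\in\mathfrak m^jR\llbracket X\rrbracket$ then the right-hand side lies in $\mathfrak m^{j+1}R\llbracket X\rrbracket$; by induction $q''\in\bigcap_{j\ge 0}\mathfrak m^jR\llbracket X\rrbracket$, which vanishes since a complete local ring is separated. Hence $s=0$ and the decomposition is unique. The main obstacle is the convergence argument in the existence step: one must set up the tail operators so that multiplication by $f_{<n}\in\mathfrak m[X]$ visibly contracts in the $\mathfrak m$-adic topology, and then invoke completeness (for existence) and separatedness (for uniqueness) of $R$ --- precisely the two features packaged in the hypothesis that $(R,\mathfrak m)$ is a complete local ring.
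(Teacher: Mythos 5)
The paper does not prove this statement: it is quoted verbatim from Bourbaki as a black box, so there is no in-paper argument to compare against. Your proof is correct and is essentially the standard (Bourbaki-style) argument: reduce the Preparation Theorem to the Division Theorem via $g=X^n$, and prove division by successive approximation for the fixed-point equation $q=v^{-1}(\widehat{g}-\widehat{q\,f_{<n}})$, using that $f_{<n}$ has all coefficients in $\mathfrak m$ to get $\mathfrak m$-adic contraction, completeness for existence, and $\bigcap_j\mathfrak m^j=0$ for uniqueness. The only points worth making explicit are that you work with the coefficientwise filtration (power series all of whose coefficients lie in $\mathfrak m^j$) rather than the ideal $\mathfrak m^jR\llbracket X\rrbracket$, which is what your phrase about raising ``the $\mathfrak m$-adic level of every coefficient'' implicitly does and which is the right choice in general, and that ``complete local ring'' is taken to include separatedness, which is exactly what your uniqueness step needs.
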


\subsection{Representation theory of profinite groups} 

    The main reference for this subsection is \cite[Ch.~5-6]{RZ10}. Let $R$ be a profinite ring. A (left) profinite $R$-module $M$ is a profinite abelian group $M$ endowed with a continuous map $R\times M\to M$ satisfying the usual $R$-module properties. Unless it is specified, every module is meant to be a left profinite module.
	Fixing a profinite commutative ring $R$ and a profinite group $G$, we denote by $\llbracket RG \rrbracket$ the completed group algebra of $G$ over $R$. Recall that, for every positive integer $n$, the group $\mathrm{GL}_n(R)$ with the topology induced by $R$ is profinite. 
	\begin{definition}
		Let $G$ be a profinite group and let $R$ be a profinite commutative ring. An $R$-\emph{representation} of $G$ is a continuous group homomorpshim $\phi\colon G\to \mathrm{GL}_n(R)$ for some $n\in\bbN$.
	\end{definition}
	If $V=R^n$ is the free $R$-module of rank $n$, a representation $\phi\colon G\to \mathrm{GL}_n(R)$ induces an $\llbracket RG \rrbracket$-module structure on $V$ defined by the map $G\times V\to V$ such that $(g,v)\mapsto \phi(g)(v)$. We denote by $V_\phi$ the corresponding $\llbracket RG\rrbracket$-module. Conversely, given an $\llbracket RG \rrbracket$-module $V$, the action of $G$ on $V$ defines a group homomorphism $\phi_V\colon G\to\mathrm{Aut}_R(V)$ that is continuous with respect the compact-open topology on $\mathrm{Aut}_R(V)$. If $V$ is free of rank $n$ as an $R$-module, then $\mathrm{Aut}_R(V)\simeq \mathrm{GL}_n(R)$ as profinite groups, i.e. $\phi_V$ is an $R$-representation of $G$ of dimension $n$. Note that, for every representation $\phi\colon G\to \mathrm{GL}_n(R)$ and every $\llbracket RG \rrbracket$-module $V$ whose underlying structure of $R$-module is free of rank $n$, the functors $V_\phi$ and $\phi_V$ are inverses of each other.
    
	Given representations $\phi\colon G\to \mathrm{GL}_n(R)$ and $\psi\colon G\to \mathrm{GL}_m(R)$, their direct sum is the representation $\phi\oplus\psi\colon G\to \mathrm{GL}_{n+m}(R)$ defined by \[ \phi\oplus\psi(g)=\begin{pmatrix}
		\phi(g) & 0_{n\times m} \\
		0_{m\times n} & \psi(g)
	\end{pmatrix}. \]
	For a subgroup $H\leq G$, we recall the functors $\mathrm{Res}_H^G(-)$ and $\mathrm{Ind}_H^G(-)$. The restriction of an $\llbracket RG \rrbracket$-module $V$ is the $\llbracket RH \rrbracket$-module $\mathrm{Res}_H^G(V)$ defined by restricting the action of $G$ to $H$. The induction of an  $\llbracket RH \rrbracket$-module $W$ is the $\llbracket RG \rrbracket$-module \[ \mathrm{Ind}_H^G(B):= \llbracket RG\rrbracket\hat\otimes_{\llbracket RH \rrbracket}B, \] where $\hat\otimes$ indicates the completed tensor product \cite[Section 5.5]{RZ10}.
    
	Now, suppose that $H\leq_oG$ and consider $\phi\colon H\to \mathrm{GL}_n(R)$ a representation of $H$. Then 
    $\mathrm{Ind}_H^G(\phi)$ is a representation of $G$ on a free $R$-module of rank $n|G:H|$. 
    Moreover, the restriction of $\mathrm{Ind}_H^G(\phi)$ contains a subrepresentation isomorphic to $\phi$. This readily implies the following. 
	\begin{proposition}\label{inducedrepresentationlemma}
		Let $G$ be a profinite group and let $R$ be a profinite commutative ring. Let $H\leq_oG$. If $H$ admits an $R$-representation whose image is not virtually solvable, then so does $G$.
	\end{proposition}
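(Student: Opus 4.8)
The plan is to take the induced representation as the witness on $G$. Concretely, let $\phi\colon H\to\mathrm{GL}_n(R)$ be the given $R$-representation of $H$ with $\phi(H)$ not virtually solvable, and set $\Phi:=\mathrm{Ind}_H^G(\phi)$. As recalled just before the statement, $\Phi$ is an $R$-representation of $G$ on a free $R$-module of rank $n\,[G:H]$, so it suffices to prove that its image $\Phi(G)\leq\mathrm{GL}_{n[G:H]}(R)$ is not virtually solvable.

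I would argue by contradiction and exploit the containment of $\phi$ as a subrepresentation of $\mathrm{Res}_H^G(\Phi)$ noted in the preceding discussion. Suppose $\Phi(G)$ were virtually solvable. Since the class of virtually solvable groups is closed under passing to subgroups, the image $\Phi(H)=\mathrm{Res}_H^G(\Phi)(H)$ would then be virtually solvable as well. The inclusion of $\phi$ as a subrepresentation provides an $H$-invariant $R$-submodule $W$ of the module underlying $\Phi$ on which $H$ acts, up to isomorphism, through $\phi$. Restriction to the invariant submodule $W$ yields a group homomorphism $\Phi(H)\to\mathrm{Aut}_R(W)$, $\Phi(h)\mapsto\Phi(h)|_W$, whose image is exactly $\phi(H)$; hence $\phi(H)$ is a quotient of $\Phi(H)$.

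Because virtually solvable groups are also closed under quotients, the virtual solvability of $\Phi(H)$ would force $\phi(H)$ to be virtually solvable, contradicting the hypothesis. Therefore $\Phi(G)$ is not virtually solvable, and $\Phi$ is the required representation of $G$. The only points needing a little care are the two elementary permanence facts for virtual solvability (stability under subgroups and under quotients) and the verification that restriction to $W$ really realises $\phi(H)$ as the image; both are routine once the subrepresentation $\phi\hookrightarrow\mathrm{Res}_H^G(\Phi)$ is available, which is the genuine input supplied by the discussion preceding the proposition. I do not anticipate a serious obstacle here, since the substantive content is already packaged in that restriction–induction relation.
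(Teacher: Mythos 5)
Your proposal is correct and follows exactly the route the paper intends: it takes $\mathrm{Ind}_H^G(\phi)$ and uses the fact, stated in the paragraph preceding the proposition, that its restriction to $H$ contains $\phi$ as a subrepresentation, so that $\phi(H)$ is a quotient of a subgroup of the image and virtual solvability would be inherited. The paper leaves these permanence checks implicit ("this readily implies"), and your writeup simply supplies them correctly.
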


\section{Probabilistic identities in \texorpdfstring{$K$}{K}-analytic groups}

Let $K$ be a non-archimedean local field and $R$ its ring of integers with uniformiser $\pi \in R$. The goal of this section is to prove \cref{thm:analytic}. We begin with a definition.

\begin{definition}
Let $X$ be a topological space with a Borel measure $\mu$. A Borel measurable set $S \subseteq X$ is \emph{locally negligible} at $x \in X$, if there exists an open neighbourhood $U$ of $x$ in $X$ such that $\mu(U \cap S) = 0$. A Borel measurable set $S$ is \emph{locally negligible}, if it is locally negligible at every point.
\end{definition}
\begin{lemma}\label{lem:locallyneg}
Let $X$ be a $\sigma$-compact topological space with a Borel measure $\mu$. 
If $S \subseteq X$ is locally negligible, then $\mu(S) = 0$.
\end{lemma}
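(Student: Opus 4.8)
The plan is to turn the purely local hypothesis into a countable union of genuinely null sets, using $\sigma$-compactness to control how many local neighbourhoods are needed. The guiding observation is that every $\sigma$-compact space is Lindel\"of: writing $X = \bigcup_{n\geq 1} K_n$ with each $K_n$ compact, any open cover of $X$ restricts to a cover of each $K_n$, which admits a finite subcover, and the union of these finite subcovers over $n$ is a countable subcover of $X$.

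Concretely, I would first invoke the definition of local negligibility: for every $x \in X$ choose an open neighbourhood $U_x$ of $x$ with $\mu(U_x \cap S) = 0$. The family $\{U_x\}_{x \in X}$ is then an open cover of $X$. Applying the Lindel\"of property above, I extract a countable subcover, i.e.\ a sequence $x_1, x_2, \ldots$ with
\[
X = \bigcup_{i \geq 1} U_{x_i}.
\]
Each set $U_{x_i} \cap S$ is Borel (an intersection of an open set with the measurable set $S$) and satisfies $\mu(U_{x_i} \cap S) = 0$ by the choice of $U_{x_i}$.

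Finally I would write $S = \bigcup_{i\geq 1}\bigl(U_{x_i} \cap S\bigr)$, which holds since the $U_{x_i}$ cover $X \supseteq S$, and conclude by countable subadditivity of $\mu$ that
\[
\mu(S) \leq \sum_{i \geq 1} \mu\bigl(U_{x_i} \cap S\bigr) = 0,
\]
whence $\mu(S) = 0$. There is no real obstacle here: the only step requiring care is the passage from the local data to a \emph{countable} cover, and this is exactly where $\sigma$-compactness (equivalently, the Lindel\"of property it implies) is used; without some such countability hypothesis the conclusion can fail, so it is worth flagging that this is the sole place the assumption on $X$ enters. Everything else is the standard fact that a countable union of $\mu$-null Borel sets is $\mu$-null.
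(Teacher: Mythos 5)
Your proposal is correct and follows essentially the same route as the paper's proof: cover $X$ by the neighbourhoods furnished by local negligibility, use $\sigma$-compactness to extract a countable subcover, and conclude by countable subadditivity. The only difference is that you spell out the intermediate Lindel\"of argument, which the paper leaves implicit.
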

\begin{proof}
For every $x \in X$ we pick an open neighbourhood $U_x$ of $x$ with $\mu(U_x \cap S)=0$. Then $X$ is covered by the family $(U_x)_{x \in X}$ of open sets. Since $X$ is $\sigma$-compact, there is a countable subcover $X = \bigcup_{j=1}^\infty U_{x_j}$ and, by countable sub-additivity of $\mu$, we deduce
\[
 \mu(S) \leq \sum_{j=1}^\infty \mu(S \cap U_{x_j} ) = 0. \qedhere
\]
\end{proof}

Given a $K$-analytic group $G$ and a word $w = w(x_1,\ldots,x_r) \in F_r$, the free abstract group of rank $r$, the word map $w_G \colon G^r  \to G$ is analytic, since it comes from successive compositions between the maps obtained from the operation of the group and from taking the inverse of an element, which are themselves analytic. By definition, $w$ is a probabilistic identity for $G$ if, and only if, $\mu(w^{-1}(1_G)) > 0$, so it suffices to study the fibers of analytic maps, which we carry out in the next lemma.

Let $M$ be an $m$-dimensional $K$-analytic manifold. A differential form $\omega$ of degree $m$ gives rise to a measure $\mu_\omega$ on $M$. Let $A \subseteq M$ be a measurable subset and let $(U,\phi,m)$ be a chart. The measure of $\mu_\omega(U\cap A)$ can be calculated in local coordinates as
\[
    \int_{\phi(U\cap A)} |f| dx_1\dots dx_m
\] for an analytic function $f$.
In particular, if $\phi(U\cap A)$ is a zero set for the standard measure on $K^m$, then $\mu_\omega(U\cap A) = 0$. 

We will say that a subset $A \subseteq M$ is
\emph{locally negligible} if it is locally negligible for $\mu_\omega$ for every top-degree form $\omega$.

\begin{lemma}\label{lem:analyticMaps}
Let $M$ and $N$ be $K$-analytic manifolds and let $f \colon M \to N$ be an analytic morphism. Let $y \in N$.
The fibre $S = f^{-1}(y)$ 
is either locally negligible or has a non-empty interior.
\end{lemma}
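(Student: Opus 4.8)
The plan is to reduce everything to a single statement about the zero set of a nonzero convergent power series and to show that such a set is Haar-null; the dichotomy then falls out by inspecting the local expansion of $f$ in charts. Since local negligibility is tested pointwise, I would argue at each point separately.

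First I would set up the local reduction. The fibre $S=f^{-1}(y)$ is closed, because $f$ is continuous and $N$ is Hausdorff; hence at any $x\notin S$ there is an open neighbourhood disjoint from $S$, so $S$ is automatically locally negligible there. It therefore suffices to treat a point $x\in S$. Choosing charts $(U,\phi,m)$ around $x$ and $(V,\psi,n)$ around $y$ with $f(U)\subseteq V$, $\phi(x)=0$ and $\psi(y)=0$, the composite $g=\psi\circ f\circ\phi^{-1}$ is analytic and, on a small ball $B(0,\rho)\subseteq\phi(U)$, is given by convergent power series $g=(F_1,\dots,F_n)$ with $F_i\in R\llbracket X_1,\dots,X_m\rrbracket$. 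Transition maps of a $K$-analytic manifold are analytic isomorphisms and hence preserve Haar-null sets (they are $C^1$ with everywhere invertible derivative, so the change-of-variables formula applies in both directions); consequently local negligibility of $S$ at $x$ may be tested for $\phi(S\cap U)=g^{-1}(0)$ inside $K^m$ with respect to Haar measure. Now there are two cases. If every $F_i$ is the zero power series, then $g\equiv 0$ on $B(0,\rho)$, so $\phi^{-1}(B(0,\rho))\subseteq S$ and $x$ lies in the interior of $S$, giving the second alternative. Otherwise some $F_{i_0}$ is a nonzero power series and $g^{-1}(0)\subseteq\{F_{i_0}=0\}$, so it is enough to show that the zero set of a nonzero convergent power series is Haar-null.

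This last statement is the crux, and I would prove it by induction on the number of variables $m$. For the base case $m=1$ I would use \cref{thm:weierstrass}: each zero $x_0$ of a nonzero convergent $F\in R\llbracket X\rrbracket$ is isolated. Indeed, expanding $F$ around $x_0$ and dividing out the largest power of $\pi$ dividing all coefficients, one may assume some coefficient is a unit; since $F(x_0)=0$ forces the constant term into $\mathfrak m$, the hypotheses of the Weierstraß Preparation Theorem hold with $n\ge 1$, yielding $F=\pi^k p u$ near $x_0$ with $u$ invertible and $p$ monic of degree $n\ge 1$. As $u$ is nonvanishing on a small disc, the zeros of $F$ near $x_0$ are exactly those of the polynomial $p$, hence isolated; a closed discrete subset of a compact disc is finite, so $F$ has finitely many zeros there, a null set. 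For the inductive step I would expand $F=\sum_{j\ge 0}c_j(X')X_m^{\,j}$ with $X'=(X_1,\dots,X_{m-1})$ and $c_j\in R\llbracket X'\rrbracket$, and apply Fubini to the product measure on $K^m=K^{m-1}\times K$. For every $x'$ at which not all $c_j(x')$ vanish, the slice $F(x',\cdot)$ is a nonzero convergent power series in one variable, so its zero set is null by the base case; hence the slice can fail to be null only for $x'\in A=\bigcap_j\{c_j=0\}\subseteq\{c_{j_0}=0\}$, where $c_{j_0}$ is any nonzero coefficient. By the induction hypothesis $\{c_{j_0}=0\}$ is null in $K^{m-1}$, so $A$ is null, and Fubini gives that $\{F=0\}$ is null.

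Putting the pieces together, at each point of $S$ we are either in the interior case, whence $S$ has non-empty interior, or the fibre is locally contained in the null zero set of a nonzero power series, whence $S$ is locally negligible at that point; combined with the points outside $S$, if the interior case never occurs then $S$ is locally negligible everywhere. The main obstacle is the analytic input in the base case: the Weierstraß factorisation is a priori only formal, so one must ensure that the unit $u$ is genuinely convergent and non-vanishing on a small disc — equivalently, that the zeros of $F$ near a point are precisely those of a polynomial — which is where the convergent (rigid-analytic) form of Weierstraß preparation, or a Newton-polygon argument, enters. A secondary point to handle with care is that local negligibility does not depend on the compatible measure chosen, so that the computations performed in charts with the Haar measure of $K^m$ legitimately transfer to $(M,\mu)$.
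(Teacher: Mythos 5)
Your proposal is correct, and the overall skeleton (reduce to charts, split off the trivial case where all component series vanish identically, otherwise show that the zero set of a single nonzero convergent power series is null) coincides with the paper's; but the key analytic step is carried out by a genuinely different route. The paper applies \cref{thm:weierstrass} once, in $m$ variables over the complete local ring $R\llbracket X_1,\dots,X_{m-1}\rrbracket$: this requires first making the series regular in $X_m$ via a linear substitution $X_i\mapsto X_i+\lambda_iX_m$ chosen so that a homogeneous component does not vanish, then rescaling the variables by powers of the uniformiser so that a distinguished coefficient dominates; the payoff is a single Fubini step in which every vertical slice is the root set of one polynomial of uniformly bounded degree $r$. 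You instead induct on the number of variables, invoking Weierstra{\ss} preparation only for $m=1$ (to see that zeros of a nonzero one-variable series are isolated, hence finite on a compact disc) and, at each inductive step, using Fubini together with the observation that the slices which are not automatically finite lie over the zero set of a nonzero coefficient series $c_{j_0}$ in $m-1$ variables, which is null by induction. Your version avoids the somewhat delicate combinatorial preparation with the graded lexicographic order and the sets $I$, $\Omega$, $\gamma_0$, at the cost of losing the uniform bound on the number of roots per fibre (which the lemma does not need anyway); the two concerns you flag at the end --- convergence and non-vanishing of the unit $u$ on a small disc, and independence of local negligibility from the choice of compatible measure in charts --- are real but are resolved exactly as in the paper ($|u(y)-u(0)|\leq|y|<1=|u(0)|$ for $y$ in the maximal ideal, and analytic transition maps with analytic inverses preserve null sets), so there is no gap.
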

\begin{proof}
Let $x \in S$, so $f(x) = y$. Assume that $S$ does not contain an open neighbourhood of $x$. We show that this entails local negligibility of $S$ at $x$. The statement is purely local; hence, by picking centered charts at $x$ and $y$ we can assume that $f \colon R^m \to R^n$ is given by convergent power series $(f_1,\dots,f_n)$ in $m$ variables and $x=0$ and $y=0$.

\medskip

\emph{Step 1:} Reduction to $n=1$.\\ 
If $f=(f_1,\dots,f_n)$ is such that $S = f^{-1}(0)$ is not a neighbourhood of  $x=0$, then one of the sets $f_i^{-1}(0)$ is not a neighbourhood of $0$. If $f_i^{-1}(0)$ is locally negligible at $x$, then the smaller set $f^{-1}(0)$ is locally negligible at $0$ as well.

\medskip

\emph{Step 2:} Use \cref{thm:weierstrass} to deduce local negligibility. \\
Let $f(\underline{X})=\sum_{\alpha \in \bbN_0^m} c_\alpha \underline{X}^\alpha$ be given in multi-index notation, i.e., $\underline{X}^\alpha= \prod_{i=1}^m X_i^{\alpha_i}$. Since $f$ converges on $R^m$, the absolute values $|c_\beta|$ tend to zero with $|\beta| \to \infty$.
Consider the graded lexicographic ordering on $\bbN_0^m$.
Since $f(0) = 0$, the constant term is 0, and since $f$ is non-zero, one may find
a minimal $\alpha \in \bbN_0^m$ with $c_\alpha \neq 0$. 
Let $r$ be the total degree of $\alpha$.
The homogeneous summand $f_{r}$ of $f$ of degree $r$ is non-zero, hence there are $\lambda_1,\dots,\lambda_{m-1} \in R$ with $f_{r}(\lambda_1,\dots,\lambda_{m-1},X_m) \neq 0$. 
Substituting $X_i+\lambda_i X_m$ for $X_i$ for all $i < m$, we obtain a non-vanishing monomial of the form $cX_m^{r}$; i.e. we may assume that $\alpha=(0,\dots,0,r)$ from now on.

Our goal will be to apply \cref{thm:weierstrass} to $(R\llbracket X_1,\ldots,X_{m-1} \rrbracket)\llbracket X_m \rrbracket$. Since $|c_\beta|$ tends to $0$, the set $I =\{\beta \in \bbN_0^m : |c_\beta| \geq |c_\alpha| \}$ is finite. Let $\Omega = \{\beta \in I : \beta_i = 0 \text{ for all }1\leq i < m \}$. By assumption $\Omega$ contains $\alpha$ and is non-empty; we may choose a minimal $\gamma_0 \in \Omega$ in the graded lexicographic order such that $|c_{\gamma_0}| \geq |c_\gamma|$ for every $\gamma \in \Omega$. If we substitute $\pi^{k_i}X_i$ for $X_i$ for suitable $k_i$ (i.e., this corresponds to restricting the function to the smaller domain $\pi^{k_1}R \times \cdots \times \pi^{k_{m-1}} R\times R$), we may assume that $|c_\beta| \leq |c_{\gamma_0}|$ for each $\beta \neq \gamma_0$, with the inequality being strict whenever $\beta < \gamma_0$. We multiply with a power of the uniformizer to assume that $|c_{\gamma_0}| = 1$. In either case, it follows that $f$ satisfies the hypothesis of \cref{thm:weierstrass} and we may write $f(X_1,\dots,X_m)= \Bigl(\sum_{i=0}^{r'} t_i(X_1,\dots,X_{m-1})X_m^{i}\Bigr) u(X_1,\dots,X_m)$
where $u \in R[\![X_1,\dots,X_m]\!]$ is invertible and the $t_i$ are power series in $m-1$ variables
with $t_{r'}(X_1,\dots,X_{m-1}) = 1$.
Since $u$ is invertible, its constant term is invertible in $R$; in particular, $u(0)\not=0$ and there exists $k\in{\mathbb N}$ such that $u$ does not have zeros in $(\pi^k R)^m$. For
$x_1,\ldots,x_{m-1}\in\pi^k R$ the polynomial $p(x_1,\dots,x_{m-1},Y) = \sum_{i=0}^r t_i(x_1,\dots,x_{m-1})Y^{i}$ has at most 
$r$ roots.
Now we integrate over the characteristic function of $f^{-1}(0)\cap (\pi^k R)^m$ using Fubini's theorem. Since every finite subset of $R$ is negligible, the inner integral always vanishes and thus $f^{-1}(0) \cap (\pi^k R)^m$ has measure zero with respect to the standard measure on $R^m$.
\end{proof}

\begin{proof}[Proof of \cref{thm:analytic}]
Assume that $w \in F_r$ is a probabilistic identity. Consider the induced word map $w_G \colon G^r \to G$. If $w$ is a probabilistic identity, then $w^{-1}(1_G)$ has positive measure. By \cref{lem:analyticMaps} the fibre $w^{-1}(1_G)$ has a non-empty interior. In particular, we can find an open subgroup $N \trianglelefteq_{\mathrm{o}} G$ such that $g_1N \times g_2N \times \cdots \times g_rN$
is contained in $w^{-1}(1_G)$, so $w$ is an open coset identity.
\end{proof}

We remark that the same result holds if we also consider words with parameters, since similarly the function defined by such a word will be analytic.

\begin{proof}[Proof of \cref{cor:3.5}]
    Suppose $G$ is not virtually solvable. Let $N$ be an open normal $R$-standard subgroup of $G$ and let $Z=Z(N)$. We have that $N/Z$ is linear over $R$ by \cite[Proposition 5.1]{JZ02}, so $G/Z$ is linear as well. Moreover, $G/Z$ is not virtually solvable. Indeed, if $M/Z$ is an open solvable subgroup of $G/Z$, it follows that $M$ is solvable as an extension of the solvable groups $M/Z$ and $Z$. This is a contradiction to the assumption that $G$ is not virtually solvable. Therefore, by \cite[Theorem 1.3]{BG}, there exists a dense free subgroup in $G/Z$, in which case $G/Z$ cannot satisfy a coset identity. Since every probabilistic identity in $G/Z$ is a coset identity by \cref{thm:analytic}, it follows that $G/Z$ is randomly free. In particular, $G$ is randomly free by \cref{cor:quot_rand_free}.
\end{proof}

For completeness we include a proof of the following technical lemma. It is similar to \cite[Lemma~2.1]{BG}, but we need a slightly different statement for which a much shorter proof suffices.
\begin{lemma}\label{lem:embedding_local}
Let $A$ be a finitely generated integral domain. Then $A$ embeds into the ring of integers $R$ of a local field $K$.
\end{lemma}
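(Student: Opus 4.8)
The plan is to produce a non-archimedean valuation on $F=\operatorname{Frac}(A)$ under which $A$ becomes integral, and then to exploit the fact that the ring of integers of a local field is integrally closed in order to force $A$ inside it. First I would reduce the statement to the following claim: it suffices to construct an embedding $F\hookrightarrow K$ into some local field $K$ whose image of $A$ lands in $\mathcal O_K$. Indeed, $A\hookrightarrow F$ is injective because $A$ is a domain, and any field embedding $F\hookrightarrow K$ is injective, so the only genuine content is the \emph{integrality}: ensuring that the images of the generators of $A$ have non-negative valuation. The guiding idea is Noether normalization: if $A$ is \emph{integral} over a polynomial subring $P$ whose generators I am free to place inside $\mathcal O_K$, then integrality together with the integral closedness of $\mathcal O_K$ in $K$ will automatically drag all of $A$ into $\mathcal O_K$.

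In the equicharacteristic case $\operatorname{char}A=p$, the ring $A$ is a finitely generated $\bbF_p$-algebra, so Noether normalization yields $t_1,\dots,t_d\in A$, algebraically independent over $\bbF_p$, with $A$ a finite (hence integral) module over $P=\bbF_p[t_1,\dots,t_d]$, where $d$ is the transcendence degree of $F$ over $\bbF_p$. Take $K=\bbF_p(\!(s)\!)$, whose ring of integers $\bbF_p[\![s]\!]$ has uncountable transcendence degree over $\bbF_p$; I can therefore choose algebraically independent $\tau_1,\dots,\tau_d\in\bbF_p[\![s]\!]$ and define $P\hookrightarrow\bbF_p[\![s]\!]$ by $t_i\mapsto\tau_i$, which is injective by algebraic independence. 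This extends to $\operatorname{Frac}(P)\hookrightarrow K$, and since $F/\operatorname{Frac}(P)$ is finite I extend further to $F\hookrightarrow\overline{K}$; the image, being generated by finitely many algebraic elements, lies in a finite extension $K'/K$, again a local field. As $A$ is integral over $P$ and $P$ maps into $\mathcal O_{K'}$, integral closedness of $\mathcal O_{K'}$ gives $A\subseteq\mathcal O_{K'}$.

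The characteristic-zero case runs identically but uses the arithmetic form of Noether normalization: for a domain $A$ finitely generated over $\bbZ$ there are a non-zero $N\in\bbZ$ and algebraically independent $t_1,\dots,t_d\in A$ such that $A[1/N]$ is finite over $P=\bbZ[1/N][t_1,\dots,t_d]$. I would pick a prime $p\nmid N$, so that $\bbZ[1/N]\hookrightarrow\bbZ_p$, and choose $\tau_1,\dots,\tau_d\in\bbZ_p$ algebraically independent over $\bbQ$ (possible since $\bbZ_p$ has uncountable transcendence degree over $\bbQ$), obtaining an injection $P\hookrightarrow\bbZ_p$. Extending to $F=\operatorname{Frac}(A)$ as before lands $F$ in a finite extension $K/\bbQ_p$, and since $A\subseteq A[1/N]$ is integral over $P\subseteq\mathcal O_K$, integral closedness yields $A\subseteq R:=\mathcal O_K$.

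I expect the main obstacle to be precisely this integrality condition: merely embedding the finitely generated field $F$ into a local field is easy, but there is no a priori reason for $A$ itself to land in the ring of integers, and the Noether-normalization-plus-integral-closure mechanism is exactly what removes this difficulty. The only genuinely characteristic-zero technicality is the localization at $N$, which is harmless once $p$ is chosen away from the finitely many primes dividing $N$.
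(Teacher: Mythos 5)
Your proposal is correct and follows essentially the same route as the paper: Noether normalization over the prime ring (with the inversion of a nonzero integer $N$ in characteristic zero and a choice of $p\nmid N$), an embedding of the polynomial subring into $\bbZ_p$ or $\bbF_p\llbracket s\rrbracket$ via algebraically independent elements, extension to a finite extension of the local field, and integrality to land $A$ in the ring of integers. No gaps.
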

\begin{proof}
Let $Q$ be the field of fractions of $A$.
If $Q$ is of characteristic zero, we define $A_0 = \bbQ A$ to be the $\bbQ$-subalgebra of $Q$ generated by $A$. If $Q$ is of characteristic $p$, we write $A_0 = A$. We write $F$ to denote the prime field of $Q$.

We apply Noether normalization \cite[p.~223]{Eisenbud} to $A_0$ to find an $F$-subalgebra $S$ of $A_0$ isomorphic to $F[T_1,\dots,T_n]$ such that $A_0$ is a finitely generated $S$-module; in particular it is an integral extension of $S$.

If $Q$ is of characteristic $0$, we note that $A$ is finitely generated and hence finitely many elements $a_1,\dots,a_k$ generate $A$ as a ring. Now these finitely many elements are integral not only over $\bbQ[T_1,\dots,T_n]$, but already over $\bbZ[1/m][T_1,\dots,T_n]$ for some integer $m \neq 0$.
We define $S =\bbZ[1/m][T_1,\dots,T_n]$ in this case and pick a prime $p$ coprime to $m$.

In the characteristic $0$ case, we put $K_0 = \bbQ_p$ and $R_0 = \bbZ_p$; in the characteristic $p$ case we put $K_0 = \bbF_p(\!(T)\!)$ and $R_0 =\bbF_p\llbracket T \rrbracket$. In both cases $R_0$ contains infinitely many transcendental (algebraically independent) elements over the prime field $F$; cf.~\cite[Lemma 3]{Cassels}. This allows us to embed $S$ into $R_0$; we fix such an embedding $\iota\colon S \to R_0$. It extends to an embedding $\iota\colon F(T_1,\dots,T_n) \to K_0$ and, moreover,
there is a finite algebraic extension $K/K_0$ such that $\iota$ extends further to an embedding $Q \to K$. By construction, $A$ is integral over $S \subseteq R_0$ and hence the image of $A$ lies in the ring of integers $R$ of $K$.
\end{proof}

We can deduce from this Theorem 1.1 and Theorem 1.4 of \cite{LS16}.
\begin{corollary}\label{cor:general_LS_1.4}
    Let $\Gamma$ be a finitely generated linear group. Then $\Gamma$ is either virtually solvable or randomly free. Moreover, every probabilistic identity on $\Gamma$ (possibly with parameters) is a coset identity.
\end{corollary}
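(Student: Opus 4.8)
The plan is to realise $\Gamma$ inside a compact group over a local field, apply \cref{cor:3.5} and \cref{thm:analytic} there, and transport both conclusions back to $\Gamma$. Write $\Gamma \leq \mathrm{GL}_n(\mathbb{F})$ for some field $\mathbb{F}$, fix a finite generating set, and let $A$ be the subring of $\mathbb{F}$ generated by the entries of the generators and of their inverses. Then $A$ is a finitely generated integral domain and $\Gamma \leq \mathrm{GL}_n(A)$. By \cref{lem:embedding_local} there is an embedding $A \hookrightarrow R$ into the ring of integers $R$ of a local field $K$, inducing an \emph{injective} homomorphism $\phi \colon \Gamma \to \mathrm{GL}_n(R) \leq \mathrm{GL}_n(K)$. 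As $R$ is compact, so is $\mathrm{GL}_n(R)$, and hence $G := \overline{\phi(\Gamma)}$ is a compact subgroup of $\mathrm{GL}_n(K)$ in which $\phi(\Gamma)$ is dense.

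For the dichotomy I would apply \cref{cor:3.5} to $G$. If $G$ is virtually solvable, then so is its subgroup $\phi(\Gamma) \cong \Gamma$. If instead $G$ is randomly free, then, $\phi(\Gamma)$ being dense in the profinite group $G$, the universal property of the profinite completion yields a continuous surjection $\widehat{\Gamma} \twoheadrightarrow G$; by \cref{cor:quot_rand_free} the group $\widehat{\Gamma}$ is randomly free, which is exactly the assertion that $\Gamma$ is randomly free.

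For the second statement, suppose $w$ is a probabilistic identity on $\Gamma$, i.e.\ $P(\widehat\Gamma, w) > 0$. Since $G$ is a quotient of $\widehat{\Gamma}$, the inequality recorded after \cref{quotient} gives $P(G, w) \geq P(\widehat\Gamma, w) > 0$, so $w$ is a probabilistic identity on the compact---hence $\sigma$-compact---$K$-analytic group $G$. By \cref{thm:analytic}, $w$ is an open coset identity on $G$; as $G$ is profinite, this is a coset identity, so there are $H \leq_o G$ and $g_1, \ldots, g_r \in G$ with $w(g_1 h_1, \ldots, g_r h_r) = 1$ for all $h_i \in H$. It remains to pull this back along $\phi$: using density, choose $\gamma_i \in \Gamma$ with $\phi(\gamma_i) \in g_i H$, so $g_i H = \phi(\gamma_i) H$, and set $H_0 = \phi^{-1}(H)$, a finite-index subgroup of $\Gamma$. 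For any $h_1', \ldots, h_r' \in H_0$ one has $\phi(\gamma_i h_i') = \phi(\gamma_i)\phi(h_i') \in g_i H$, whence $\phi\bigl(w(\gamma_1 h_1', \ldots, \gamma_r h_r')\bigr) = w(\phi(\gamma_1 h_1'), \ldots, \phi(\gamma_r h_r')) = 1$, and injectivity of $\phi$ forces $w(\gamma_1 h_1', \ldots, \gamma_r h_r') = 1$. Thus $w$ is a coset identity on $\Gamma$. The same argument applies to words with parameters taken from $\Gamma$: their images under $\phi$ become parameters in $G$, the associated word map on $G$ is still analytic, and the remark following \cref{thm:analytic} supplies the conclusion there.

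The crucial design choice---which I regard as the main obstacle---is the passage between $\Gamma$ and a profinite group to which \cref{thm:analytic} applies. One cannot argue through $\widehat{\Gamma}$ alone: a coset identity on a \emph{quotient} $G$ of $\widehat{\Gamma}$ need not lift, since it only forces the word into the kernel. The point of replacing $\widehat{\Gamma}$ by the concrete compact completion $G = \overline{\phi(\Gamma)}$ is that $\phi$ is injective, so the coset identity on $G$ genuinely descends to $\Gamma$, while $G$ simultaneously carries the $K$-analytic structure required by \cref{thm:analytic} and \cref{cor:3.5}. Checking that $\phi$ is injective with dense image and that $[\Gamma : \phi^{-1}(H)] < \infty$ are the small points that make the transport work.
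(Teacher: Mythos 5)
Your proposal is correct and follows essentially the same route as the paper: embed $\Gamma$ into $\mathrm{GL}_n(A)$ for a finitely generated integral domain $A$, use \cref{lem:embedding_local} to land in $\mathrm{GL}_n(R)$, pass to the compact closure $G$, apply \cref{cor:3.5} and \cref{thm:analytic} there, and transport back via density of $\phi(\Gamma)$ and injectivity of $\phi$. The only difference is that you spell out the pullback of the coset identity in more detail than the paper does, which is a welcome clarification rather than a deviation.
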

\begin{proof}
    Since $\Gamma$ is finitely generated, we can embed
$\Gamma \subseteq \mathrm{GL}_n(A)$ where $A$ is a finitely generated, integral domain. By \cref{lem:embedding_local}, $A$ may be embedded into the ring of integers $R$ of a local field $K$. Hence $\Gamma \subseteq \mathrm{GL}_n(R)$.

Thus the closure $G$ of $\Gamma$ in $\mathrm{GL}_n(R)$ is a compact $K$-analytic group. Moreover, $\Gamma$ is virtually solvable if, and only if, $G$ is virtually solvable. 
Note that $G$ is a continuous quotient of the profinite completion $\widehat{\Gamma}$ of $\Gamma$. If $\Gamma$ is not virtually solvable, then $G$ is not virtually solvable, and hence,  by \cref{cor:3.5}, randomly free.  By \cref{cor:quot_rand_free},
the same applies to $\widehat{\Gamma}$. Moreover, if $\Gamma$ satisfies a probabilistic identity $w$, then so does $G$ and hence $w$ is a coset identity on $G$. Since $\Gamma$ is dense in $G$, we have that $w$ is a coset identity on $\Gamma$ and, hence, on its profinite completion.
\end{proof}
	
\section{Large pro-\texorpdfstring{$p$}{p} groups and applications}\label{sec:free_constr}

To motivate why we restrict our attention to large pro-$p$ groups, we make the following observation. An \emph{upper composition factor} of a profinite group $G$ is a composition factor of $G/N$ for some open normal subgroup $N$ of $G$. If $G$ is a profinite group with infinitely many non-isomorphic non-abelian upper composition factors, then $G$ is randomly free by \cite[Theorem 1.5]{LS18}; this holds for many classes of groups and, in particular, for large profinite groups. In the pro-$p$ case, there is no hope of using this result, given that all the composition factors are isomorphic to $C_p$, so a different method is required to understand whether large pro-$p$ groups are randomly free. 

\begin{proof}[Proof of \cref{vfpprobtits}]
    Let $G$ be a large pro-$p$ group, and let $N\leq_o G$ be an open subgroup that surjects onto a non-abelian free pro-$p$ group $F$ on a profinite space $X$. Since $X$ has finite continuous quotients, we may assume that $X$ is finite, say $|X|=k\geq 2$.
    Let $H$ be a $p$-adic analytic pro-$p$ group that is not solvable and suppose that $d(H)=d$.   By the Schreier formula for free pro-$p$ groups, we may assume that $k\geq d$. Therefore, by the universal property of the free pro-$p$ group,  there is a surjective homomorphism of pro-$p$ groups $\pi\colon F\to H$. Since $H$ is a closed subgroup of $\mathrm{GL}_n(\bbZ_p)$ for some $n$, $\pi$ is a $\bbZ_p$-representation of $F$ whose image is not virtually solvable. By \cref{inducedrepresentationlemma}, for some $m$ there is a $\bbZ_p$-representation $\phi\colon G\to \mathrm{GL}_m(\bbZ_p)$ whose image is a $p$-adic analytic group that is not solvable. Note that we can choose $m=n|G:F|$. Therefore $G$ is randomly free by \cref{cor:3.5}.	
\end{proof}

As a direct application of the above theorem, we show that two classes of pro-$p$ groups satisfy a probabilistic Tits alternative.

\begin{corollary}\label{cor:demushkin}
    Let $G$ be a finitely generated pro-$p$ Demushkin group. Then $G$ is either solvable or randomly free.
\end{corollary}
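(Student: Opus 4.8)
The plan is to reduce everything to the dichotomy for virtually free pro-$p$ groups. Write $n=d(G)$ for the minimal number of generators of the finitely generated Demushkin group $G$. By definition $G$ is a one-relator pro-$p$ group $G=F/N$, where $F$ is free pro-$p$ of rank $n$, $N$ is the closed normal subgroup generated by a relator $r$, and $r$ lies in the Frattini subgroup $\Phi(F)=F^p[F,F]$. Since the cup product $H^1(G,\bbF_p)\times H^1(G,\bbF_p)\to H^2(G,\bbF_p)\cong\bbF_p$ is non-degenerate, one has $n\ge 2$, with $n$ even when $p$ is odd. When $n=2$ the group $G$ is metabelian (the pro-$p$ analogue of a torus or Klein-bottle group), hence solvable, and there is nothing to prove. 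So I would assume $n\ge 3$ and aim to produce a continuous surjection of $G$ onto a virtually non-abelian free pro-$p$ group; the conclusion then follows from \cref{vfpprobtits} together with \cref{cor:quot_rand_free}.

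To build the surjection I would invoke the Demushkin--Serre--Labute classification of the relator. In every case $r$ is a product of a \emph{power head} involving at most two generators, say $x_1$ (and possibly $x_2$), with a \emph{commutator tail} $[x_i,x_{i+1}]\cdots[x_{n-1},x_n]$ pairing up the remaining generators. I fix the target $L=\langle c\rangle\amalg\langle a\rangle$, the free pro-$p$ product with $\langle c\rangle=C_p$, $\langle a\rangle=C_p$ when $p$ is odd, and $\langle c\rangle=C_2$, $\langle a\rangle=C_4$ when $p=2$. I then define $\phi\colon F\to L$ by sending the head generator $x_1\mapsto c$, any commutator partner of $x_1$ to $1$, one pure-commutator generator $x_\ell$ of the tail to $a$, the partner of $x_\ell$ to $1$, and every other generator to $1$. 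The element $c$ has order dividing each exponent occurring in the power head (these are $c^{p^f}$ for odd $p$, and $c^{2^f}$ or $c^{2+2^f}=(c^2)^{1+2^{f-1}}$ for $p=2$, all killed since $c$ has order $p$ resp.\ $2$), so the head maps to $1$; each commutator maps to $1$ because at least one of its entries is sent to $1$. Hence $\phi(r)=1$, so $\phi$ factors through $G$, and since $c,a$ generate $L$ this gives a surjection $G\twoheadrightarrow L$. Finally $L$ is precisely one of the groups recalled above to be virtually non-abelian free pro-$p$ ($C_p\amalg C_p$ for odd $p$, and $C_2\amalg Q$ with $|Q|=4$ for $p=2$), hence randomly free by \cref{vfpprobtits}, and \cref{cor:quot_rand_free} transfers randomness to $G$.

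The point I expect to require the most care is the guarantee that a pair $(x_1,x_\ell)$ can always be chosen so that $x_1$ and $x_\ell$ do \emph{not} occur together in a single commutator of $r$, for otherwise $[x_1,x_\ell]\mapsto[c,a]\neq 1$ and $\phi$ would not respect $r$. For $n\ge 4$ this is automatic, since the tail then contains a commutator pair disjoint from the head. The delicate situation is $p=2$ with $n=3$ (for example the maximal pro-$2$ Galois group of $\bbQ_2$), where the relator can couple a generator that appears both in the power head and in the unique tail commutator $[x_2,x_3]$; there I would send that coupled generator $x_2\mapsto 1$, which simultaneously annihilates its power contribution and the commutator, leaving $x_1\mapsto c$ and $x_3\mapsto a$ to generate $L$. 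Verifying that the image is never the procyclic-by-finite infinite pro-$2$ dihedral group $C_2\amalg C_2$ — which is exactly why the second factor is taken to be $C_4$ and not $C_2$ — is the one step that needs to be checked by hand in each normal form.
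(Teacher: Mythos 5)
Your argument is correct and essentially the paper's own: both use the Labute normal form of the relator to construct a surjection onto $C_p\amalg C_p$ (for $p$ odd) or $C_2\amalg C_4$ (for $p=2$, including the delicate $d(G)=3$ case via killing the generator shared by the power head and the tail commutator) and then invoke \cref{vfpprobtits} and \cref{cor:quot_rand_free}, after disposing of $d(G)=2$ as solvable; the only cosmetic differences are that for $d(G)\ge 4$ the paper surjects directly onto a non-abelian free pro-$p$ group, and that it proves solvability for $d(G)=2$ by noting $G$ is $p$-adic analytic of dimension $2$ rather than by citing the metabelian normal forms. One small slip: your claim that $n\ge 2$ always holds fails for the finite Demushkin group $C_2$ (where $n=1$), but that group is solvable, so the statement is unaffected.
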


\begin{proof}
    Let $G$ be a finitely generated Demushkin group; we refer to \cite{Lab67} for the classification of Demushkin groups. If $G$ is finite, then $G \simeq C_2$ is solvable. Suppose $G$ is infinite. If $d(G) \geq 4$, then by examining the possible presentations of $G$, there is a surjection of $G$ onto a non-abelian free pro-$p$ group. 
    
    Suppose $d(G)=2$. By the rank formula, if $U \leq_o G$, then
    \[d(U) = [G:U](d(G) - 2)+2 = 2,\]
    so $G$ is a $p$-adic analytic pro-$p$ group of dimension 2. If $G$ is abelian, it is solvable. If $G$ is not abelian, then $G^{\rm{ab}} \simeq \bbZ/q\bbZ \times \bbZ_p$, where $q$ is a power of $p$. This implies that $[G,G]$ is a $p$-adic analytic group of dimension 1, thus solvable. 
    
    Now, suppose $d(G)=3$. By the rank formula and the dichotomy of subgroups of Demushkin groups, there exists $H\leq_o G$ which is Demushkin and such that $d(H)>3$, so $H$ surjects onto a non-abelian free pro-$p$ group and consequently $G$ is large. Thus, the result follows from \cref{vfpprobtits}.
\end{proof}

Notice that the cases in which $d(G) \neq 3$ do not require much machinery, since they are either solvable or directly surject onto a non-abelian free pro-$p$ group. However, when $d(G)=3$, there are \emph{no} surjections from $G$ onto a non-abelian free pro-$p$ group by \cite{S74}, so \cref{vfpprobtits} is essential.

\begin{corollary}\label{freeprod}
    Let $G$ be a pro-$p$ group that splits as a non-trivial free pro-$p$ product of pro-$p$ groups. If $p$ is odd, then $G$ is randomly free. If $p=2$ and $G\not\simeq C_2\amalg C_2$, then $G$ is randomly free. In particular, $G$ is either solvable or randomly free.
\end{corollary}
\begin{proof}
    If $p$ is odd, then $G$ projects onto $C_p \amalg C_p$. The kernel of the map $C_p \amalg C_p \to C_p \times C_p$ is non-abelian free pro-$p$ by the Kurosh subgroup theorem (see e.g. \cite[Theorem 7.3.1]{R17}) and by \cite[Corollary 7.1.8]{R17}. If $p=2$, consider $G = \amalg_X G_x$, where $X$ is a profinite space. If $X=\{x_1,x_2\}$, then either $G_{x_1} = G_{x_2} \simeq C_2$ and $G$ is solvable or $G_{x_1}$, without loss of generality, has a finite quotient $Q$ of order greater than 4, in which case $G$ surjects onto $Q \amalg C_2$. If $|X| > 2$, then $G$ surjects onto $C_2 \amalg C_2 \amalg C_2$, which is virtually non-abelian free pro-2 by the same argument as in the first case.
\end{proof}

Before continuing, we recall the following standard fact. If $G$ is a pro-$p$ group and $H \leq G$ is a proper subgroup, then there exists $N\unlhd_o G$ such that $HN$ is a proper open subgroup of $G$. Since $G/N$ is nilpotent, the normal closure of $HN/N$ in $G/N$ is a proper normal subgroup of $G/N$, whose preimage in $G$ gives us a proper open normal subgroup of $G$ that contains $H$. More generally, this shows that the normal closure $H^G$ of $H$ in $G$ is a proper normal subgroup of $G$.

For the remaining proofs in this section, we use profinite Bass-Serre theory; we refer the reader to \cite{R17} for the basic definitions and for a deeper exposition on the topic. A \emph{finite directed graph} is a quintuple $(\Gamma, V(\Gamma), E(\Gamma), d_0,d_1)$ where: $\Gamma$ is a finite set which is a disjoint union of its subsets $V(\Gamma)$ and $E(\Gamma)$--called the \emph{vertex set} and \emph{edge set} of $\Gamma$, respectively--and functions $d_i \colon \Gamma \to V(\Gamma)$, for $i \in \{0,1\}$, such that their restriction to $V(\Gamma)$ is the identity--$d_0(e)$ and $d_1(e)$ are called the \emph{initial} and \emph{terminal} vertices of an edge $e \in E(\Gamma)$, respectively.
Let $(\mathcal G, \Gamma)$ be a graph of pro-$p$ groups over a finite connected graph $\Gamma$, so to each $m \in \Gamma$ we associate a pro-$p$ group $\mathcal G(m)$. If $m$ is a vertex (resp. edge), we say that $\mathcal G(m)$ is a vertex group (resp. edge group). Furthermore, if $e$ is an edge and $d_0(e)$ and $d_1(e)$ are its initial and terminal vertices, respectively, then we have monomorphisms $\partial_0 \colon\mathcal G(e) \to \mathcal G(d_0(e))$ and $\partial_1 \colon \mathcal G(e) \to \mathcal G(d_1(e))$. Let $T$ be a maximal subtree of $\Gamma$. The \emph{fundamental pro-$p$ group of $(\mathcal G,\Gamma)$} will be denoted by $\Pi=\Pi_1(\mathcal G, \Gamma)$ and is defined as
\[\left( \coprod_{v \in V(\Gamma)} \mathcal G(v) \right) \amalg F,\]
where $F$ is the free pro-$p$ group on the set $\{t_e \mid e \in E(\Gamma)\}$, modulo the normal closure $N$ of the set
\[\{t_e \mid e \in E(T)\} \cup \{\partial_0(x)^{-1}  t_e\partial_1(x)t_e^{-1} \mid x \in \mathcal G(e), e\in E(\Gamma)\}.\]
We remark that the resulting group is independent of the choice of maximal subtree. If all the vertex and edge groups embed in $\Pi$, then we say that $(\mathcal G, \Gamma)$ is \emph{injective.} That is not always the case, but we can replace $(\mathcal G,\Gamma)$ by an injective graph of groups $(\tilde{\mathcal G},\Gamma)$ such that $\Pi_1(\mathcal G, \Gamma)=\Pi_1(\tilde{\mathcal G},\Gamma)$; notice that the underlying graph does not change (essentially, we replace the vertex and edge groups by their canonical images in the fundamental group). Moreover, we say that an injective graph of groups is \emph{reduced} if the monomorphisms of edge groups to vertex groups are not surjective for all edges that are no loops. When $\Gamma$ is finite, we can also replace a non-reduced graph of groups by a reduced one with the same fundamental group, which is achieved by collapsing the `superfluous' edges. Details for these procedures may be found in \cite[Sections 6.4, 7.2]{R17}. We recall also that to a graph of pro-$p$ groups $(\mathcal G,\Gamma)$ over a finite graph $\Gamma$ we can associate a standard graph $S=S(\mathcal G,\Gamma)$ (which is a $p$-tree) on which $G=\Pi_1(\mathcal G,\Gamma)$ acts and such that $G\setminus S$, the set of orbits of the action, is equal to $\Gamma$ (\cite[Section 6.3]{R17}).

For a vertex $v \in V(\Gamma)$, we denote by $X_v$ the set of edges in $\Gamma$ incident to $v$ and we say that $v$ is a \emph{pending vertex} if $|X_v|=1$. If $e \in X_v$, let $J(v,e) \subseteq \{0,1\}$ be the set of indices $i$ such that $d_{i}(e)=v$.

\begin{lemma}\label{cor:graphs}
    Let $(\mathcal G, \Gamma)$ be an injective reduced graph of pro-$p$ groups over a finite connected graph $\Gamma$ and let $T$ be a maximal subtree of $\Gamma$. Define
    \[\tilde{\mathcal G}(v):=\mathcal G(v)/\langle \partial_i(\mathcal G(e)) \mid e \in X_v,\hspace{0.2em} i \in J(v,e)\rangle^{\mathcal G(v)}.\]
    If $\Gamma$ is not a tree, suppose that there is a vertex $v$ such that $\tilde{\mathcal G}(v)$ is non-trivial or that $|E(\Gamma) \smallsetminus E(T)| \geq 2$. If $\Gamma$ is a tree, suppose that $|V(\Gamma)|\geq 2$ and, if $p=2$, that $\tilde{\mathcal G}(v)$ is non-trivial for at least 3 vertices or that $|\tilde{\mathcal G}(v)| \geq 4$ for one vertex.
    Then $\Pi_1 (\mathcal G, \Gamma)$ is randomly free.
\end{lemma}
\begin{proof}
    Define a graph of pro-$p$ groups $(\tilde{\mathcal G},\Gamma)$ by setting $\tilde{\mathcal G}(v)$ as the vertex groups and letting the edge groups be trivial. Fix $T$ a maximal subtree of $\Gamma$.
    Thus, $(\tilde{\mathcal G},\Gamma)$ is a graph of pro-$p$ groups and
    \[\tilde{\Pi} = \Pi_1(\tilde{\mathcal G},\Gamma) = \left( \coprod_{v \in V(\Gamma)} \tilde{\mathcal G}(v) \right) \amalg L,\]
    where $L$ is a free pro-$p$ group with free generating set $\{t_e \mid e \in E(\Gamma) \smallsetminus E(T)\}$. With respect to $T$, we have that $\Pi = \Pi_1(\mathcal G, \Gamma)$ is the group
    \[\left( \coprod_{v \in V(\Gamma)} \mathcal G(v) \right) \amalg F,\]
    where $F$ is the free pro-$p$ group on the set $\{t_e \mid e \in E(\Gamma)\}$, modulo the normal closure $N$ of the set
    \[\{t_e \mid e \in E(T)\} \cup \{\partial_0(x)^{-1}t_e\partial_1(x)t_e^{-1} \mid x \in \mathcal G(e), e\in E(\Gamma)\}.\]
    By the universal property of the free pro-$p$ product, we have a surjective homomorphism
    \[\left( \coprod_{v \in V(\Gamma)} \mathcal G(v) \right) \amalg F \to \tilde{\Pi}.\]
    By construction of $(\tilde{\mathcal G}, \Gamma)$, this map factors through $N$ and we obtain a surjection $\Pi \to \tilde{\Pi}$.

    We divide the proof in two cases. Suppose $\Gamma$ is not a tree. If every $\tilde{\mathcal G}(v)$ is trivial and $|E(\Gamma) \smallsetminus E(T)| \geq 2$, then $\tilde{\Pi}$ is a non-abelian free pro-$p$ group and thus randomly free. If there exists a vertex $v$ such that $\tilde{\mathcal G}(v)$ is non-trivial, then $\tilde{\Pi}$ is a non-trivial free pro-$p$ product and thus randomly free by \cref{freeprod}.

    Suppose $\Gamma$ is a tree. For $p$ odd, since $|V(\Gamma)| \geq 2$, there at least two pending vertices in $\Gamma$, say $v_1$ and $v_2$. The graph of pro-$p$ groups $(\tilde{\mathcal G},\Gamma)$ is injective and reduced by assumption, so $\tilde{\mathcal G}(v_1)$ and $\tilde{\mathcal G}(v_2)$ are non-trivial. Again, $\tilde{\Pi}$ is a non-trivial free pro-$p$ product.
    The case $p=2$ may be proved by a similar argument.
\end{proof}

The following is not complicated, but we include a proof for completeness.

\begin{lemma}\label{lem:circuit}
    Let $\Gamma$ be a connected graph such that $|V(\Gamma)|=|E(\Gamma)| = n \geq 2$ and suppose that $\Gamma$ has no pending vertices. Then $\Gamma$ is a circuit.
\end{lemma}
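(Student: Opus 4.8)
The plan is to prove the statement by a degree count. For a vertex $v \in V(\Gamma)$, let $\deg(v) = |d_0^{-1}(v)| + |d_1^{-1}(v)|$ denote the number of edge-ends at $v$; since each edge $e$ has exactly the two ends $d_0(e)$ and $d_1(e)$, the handshake identity $\sum_{v \in V(\Gamma)} \deg(v) = 2|E(\Gamma)| = 2n$ holds. Because $\Gamma$ is connected with $n \geq 2$ vertices, no vertex is isolated, and because $\Gamma$ has no pending vertices we have $|X_v| \neq 1$; hence $|X_v| \geq 2$ for every $v$. As every edge incident to $v$ contributes at least one to $\deg(v)$, we obtain $\deg(v) \geq |X_v| \geq 2$ for all $v$.

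Summing over the $n$ vertices yields $2n = \sum_{v \in V(\Gamma)} \deg(v) \geq 2n$, so all the intermediate inequalities must be equalities. In particular $\deg(v) = |X_v| = 2$ for every vertex $v$; moreover the equality $\deg(v) = |X_v|$ rules out any loop at $v$, since a loop would contribute $2$ to $\deg(v)$ but only $1$ to $|X_v|$. Thus $\Gamma$ is a connected, loopless graph in which every vertex is incident to exactly two edges.

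It remains to argue that such a graph is a circuit, which I expect to be the only slightly delicate point. First I would trace a walk: starting from any vertex $v_0$ and one of its two incident edges, repeatedly leave each vertex through its (unique) other incident edge. Since every degree equals $2$ this process never gets stuck, and as $\Gamma$ is finite the walk must eventually revisit a vertex, producing a closed circuit $C \subseteq \Gamma$. The first repeated vertex is necessarily $v_0$: any intermediate vertex has both of its two edges consumed upon being entered and left, so it cannot be re-entered through a third edge, whereas $v_0$ still has one free edge available to close the walk. Because the two edges at each vertex of $C$ are already used by $C$, no further edge of $\Gamma$ can meet $C$; connectivity of $\Gamma$ then forces $C = \Gamma$, and hence $\Gamma$ is a circuit. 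The main (minor) obstacle is making this walk-tracing step precise, but the degree-$2$ condition makes it routine.
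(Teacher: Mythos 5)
Your proof is correct, but it takes a genuinely different route from the paper. The paper argues by induction on $n$: it invokes the standard fact that a connected graph with $|V(\Gamma)|=|E(\Gamma)|$ has exactly one cycle, removes an edge $e$ of that cycle to obtain a spanning tree $T$, observes that the (at least two) pending vertices of $T$ must be exactly the endpoints of $e$ since $\Gamma$ has none, and then collapses $e$ and applies the induction hypothesis. You instead run a global counting argument: the handshake identity $\sum_v \deg(v) = 2n$ combined with $\deg(v) \geq |X_v| \geq 2$ forces $\deg(v)=|X_v|=2$ everywhere, which also rules out loops (a loop contributes $2$ to $\deg(v)$ but only $1$ to $|X_v|$ -- a case the paper's proof leaves implicit), and then the walk-tracing plus connectivity closes the argument. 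Your version is more self-contained, since it does not rely on the ``exactly one cycle'' fact or on trees having at least two leaves, and it handles the multigraph conventions of the paper (loops, parallel edges, the $n=2$ digon) transparently; the paper's induction is shorter on the page at the cost of citing those standard facts. One small notational point: since $d_0,d_1$ are defined on all of $\Gamma$ and restrict to the identity on $V(\Gamma)$, you should write $\deg(v) = |d_0^{-1}(v)\cap E(\Gamma)| + |d_1^{-1}(v)\cap E(\Gamma)|$, as otherwise the vertex $v$ itself is counted in each preimage; your phrase ``edge-ends at $v$'' makes the intent clear, but the formula as written is off by $2$.
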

\begin{proof}
    We prove it by induction on $n$. The case $n=2$ is clear. Suppose now that $n>2$. It is a standard fact that $\Gamma$ has exactly one cycle. If we remove an edge $e$ in this cycle, then $T=\Gamma \smallsetminus \{e\}$ is a tree. In that case, $T$ has at least two pending vertices; let $v_1$ and $v_2$ be two of them. We conclude that $d_{1-i}(e) = v_1$ and $d_i(e) = v_2$ for some $i \in \{0,1\}$, otherwise it would contradict the fact that $\Gamma$ has no pending vertices. It follows that the valency of both $v_1$ and $v_2$ are equal to 2. If we collapse the edge $e$ to $v_2$, we obtain a circuit by induction. Upon reinserting $e$, we conclude that $\Gamma$ is a circuit.
\end{proof}

Combining \cref{cor:graphs} and \cref{lem:circuit}, one sees that the only graphs of pro-$p$ groups $(\mathcal G,\Gamma)$ left to tackle are when $p=2$ and $\Gamma$ is a line or when $p$ is any prime and $\Gamma$ is a circuit, which is what we do next.

We will say that a reduced injective graph of pro-$p$ groups is of \emph{type I} if it is of the form
\begin{center}
    \begin{tikzpicture}
        \filldraw[black] (0,0) circle (2pt);
        \draw[->,black] (0.5,0) circle [radius=15pt];
        \node[left=2pt] at (0,0) {$H$};
        \node[right=2pt] at (1,0) {$K$};
    \end{tikzpicture}
\end{center}
and $\partial_i(K) = H$ for both $i \in \{0,1\}$, and of \emph{type II} if it is of the form 
\[
\begin{tikzpicture}
    \filldraw[black] (-1,0) circle (2pt) 
                     (1,0) circle (2pt);
    \draw (-1,0) -- (1,0) node [midway, above=2pt, fill=white] {$H$};
    \node[above=3pt] at (-1,0) {$G_1$};
    \node[above=3pt] at (1,0) {$G_2$};
\end{tikzpicture}
\]
where $H$ has index 2 in both $G_1$ and $G_2$. For example, every finite central extension $G_c=C_{2c}\amalg_{C_c}C_{2c}$ of the infinite dihedral pro-$2$ group $C_2\amalg C_2$ is of type II. Note that $G_c$ is solvable.

Notice that we do not have to consider HNN-extensions such that $\partial_0(K) \neq H = \partial_1(K)$, because, in this case, the HNN-extension is not proper. Indeed, let $G=\mathrm{HNN}(H,K,\partial)$ be a pro-$p$ HNN-extension such that $K$ is a proper subgroup of $H$ and $\partial \colon K \to H$ is an isomorphism. Suppose that $G$ is proper. Since $K$ is a proper subgroup of $H$, then there exists $U \unlhd_o H$ such that $KU \neq H$. By \cite[Theorem 1.3]{Zoe94}, there exists $V \unlhd_o H$ such that, in particular, $V \leq U$ and $\partial$ induces an isomorphism $\tilde \partial \colon KV/V \to H/V$. The latter statement contradicts the fact that $KV \neq H$, so $G$ cannot be proper. In other words, ascending pro-$p$ HNN-extensions are never proper.

Recall that if $(\mathcal G, \Gamma)$ is a graph of pro-$p$ groups over a finite graph $\Gamma$ and $T$ is a spanning tree of $\Gamma$, then $E(\Gamma)-E(T) \geq 2$ implies that $\Pi_1(\mathcal G, \Gamma)$ surjects onto a non-abelian free pro-$p$ group, as in the proof of \cref{cor:graphs}.

\begin{thm}\label{thm:graphs_of_groups}
     Let $(\mathcal G, \Gamma)$ be a reduced injective graph of pro-$p$ groups and let $G=\Pi_1(\mathcal G, \Gamma)$. If $p$ is odd and $(\mathcal G, \Gamma)$ is not of type I, then $G$ is randomly free. If $p=2$ and $(\mathcal G, \Gamma)$ is neither of type I nor II, then it is randomly free.
\end{thm}
\begin{proof}
    By \cref{lem:circuit} and \cref{cor:graphs}, it suffices to consider graphs of groups whose underlying graph $\Gamma$ is a circuit that is not of type I or a line that is not of type II. Consider $S = S(\mathcal G, \Gamma)$ the standard graph of $(\mathcal G, \Gamma)$, let $N$ be an open normal subgroup of $G$ and denote the vertex and edge groups of $(\mathcal G, \Gamma)$ by $G_v$ and $G_e$, respectively. Since $G \setminus S$ is finite, then $N \setminus S = \Gamma_N$ is also finite, so, by \cite[Theorem 6.6.1]{R17}, $N$ splits as the fundamental pro-$p$ group $\Pi_1(\mathcal G_N,\Gamma_N)$ of a graph of pro-$p$ groups over the finite graph $\Gamma_N$. In this case, we have that
    \[|V(\Gamma_N)| = \sum_{v \in V(\Gamma)} |G/G_vN|\]
    and
    \[|E(\Gamma_N)| = \sum_{e \in E(\Gamma)} |G/G_eN|.\]
    If $T_N$ is a spanning tree for $\Gamma_N$, then we further have that $|E(T_N)|=|V(\Gamma_N)|-1$. From
    \[|E(\Gamma_N)|-|E(T_N)| = |E(\Gamma_N)|-|V(\Gamma_N)|+1\]
    and \cref{vfpprobtits} it follows that, in order to conclude that $G$ is randomly free, it suffices to find $N\unlhd_o G$ such that $|E(\Gamma_N)|-|V(\Gamma_N)|> 0$. First, consider the case where $G=\mathrm{HNN}(G_v,G_e,f)$. Since $(\mathcal G, \Gamma)$ is not of type I, then the embeddings of the edge group in $G_v$ are proper. So we can choose $N$ such that $|G_vN/G_eN| > 1$. Thus, we have
    \[|E(\Gamma_N)|-|V(\Gamma_N)| = |G/G_eN|- |G/G_vN| = |G/G_vN|(|G_vN/G_eN|-1),\]
    which is greater than 0. Second, for circuits with at least two vertices or lines not of type II, we have
    \begin{align*}
        |E(\Gamma_N)|-|V(\Gamma_N)| & = \sum_e |G/G_eN|- \sum_v |G/G_vN| \\ & = \sum_{v \in V(\Gamma)}|G/G_vN|\left(\sum_{e\in X_v}\cfrac{1}{2}|G_vN/G_eN|-1 \right),
    \end{align*}
    where the half factor appears from the fact that each edge is incident to two distinct vertices. Since there are finitely many edge groups and the graph of groups is reduced, one can choose $N$ such that $|G_vN/G_eN| > 1$ for every $e \in E(\Gamma)$, so every summand on the right is non-negative. If $p=2$ and $(\mathcal G, \Gamma)$ is not of type I nor II, then either we have an edge group of index greater than 2 in one of its incident vertex groups or, if every edge group has index 2 in its incident vertex groups, then there will be a vertex $v$ such that $|X_v| \geq 2$. In any case, the quantity above is greater than 0 and thus $N$ is large. If $p$ is odd, the same conclusion directly holds. Thus, $G$ is also large and thus randomly free by \cref{vfpprobtits}.
\end{proof}

We recall the following definition of pro-$p$ analogues of limit groups which first appeared in \cite{KZ}. Let $p$ be a prime number and let $\mathcal G_0$ be the class of free pro-$p$ groups of finite rank. Define $\mathcal G_n$ recursively as follows: a group $G_n$ is in $\mathcal G_n$ if $G_n = G_{n-1} \amalg_C A$, where $G_{n-1} \in \mathcal G_{n-1}$, $A$ is free abelian pro-$p$ of finite rank and $C$ is a self-centralizing procyclic subgroup of $G_{n-1}$ and also a proper direct summand of $A$. A pro-$p$ group $H$ is in the class $\mathcal L$ if it is a finitely generated pro-$p$ subgroup of a group in $\mathcal G_n$ for some $n\geq 0$. The \emph{weight} of $H$ is the smallest such $n$. Our aim will be to prove that groups in $\mathcal L$ are either solvable or randomly free. In light of \cite[Theorem B]{SZ14}, the class $\mathcal L$ is suitable for a combinatorial approach.

\begin{lemma}\label{lem:index_normal_closure}
    Let $H \in \mathcal L$ be such that $d(H) > 1$ and let $x \in H$. Then $|H:\langle x \rangle^H| \geq 3$.
\end{lemma}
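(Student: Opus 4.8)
The plan is to pass to the abelianisation $H^{\mathrm{ab}}=H/\overline{[H,H]}$. Write $N=\langle x\rangle^{H}$ for the closed normal closure and let $q\colon H\to H^{\mathrm{ab}}$ be the canonical projection. Since every conjugate $x^{h}$ has the same image $\bar{x}=q(x)$ in the abelian quotient, the image $q(N)$ is exactly the procyclic closed subgroup $\overline{\langle \bar{x}\rangle}$. The map $q$ then induces a continuous surjection $H/N\twoheadrightarrow H^{\mathrm{ab}}/q(N)$, so that $|H:N|\geq |H^{\mathrm{ab}}:\overline{\langle\bar x\rangle}|$, and it suffices to bound below the index of a procyclic subgroup inside $H^{\mathrm{ab}}$. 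I stress that a cruder estimate on $N$ itself cannot work: a module argument shows only that $d(N)\leq 2$ when $|H:N|=2$, whereas for $H\cong\bbZ_p^{2}$, which already lies in $\mathcal L$, an index-$2$ subgroup is again $2$-generated; so the inequality must come from finer information about $H^{\mathrm{ab}}$ rather than from a Schreier-type rank bound.

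The key input is that $H^{\mathrm{ab}}$ is torsion-free, hence $H^{\mathrm{ab}}\cong \bbZ_p^{\,d(H)}$ with $d(H)\geq 2$. For the defining groups $\mathcal G_n$ this I would verify by induction on the weight: the abelianisation of $\mathcal G_n=\mathcal G_{n-1}\amalg_C A$ is the pushout of $\mathcal G_{n-1}^{\mathrm{ab}}$ and $A$ along the procyclic subgroup $C$, and since $C$ is a direct summand of the free abelian pro-$p$ group $A$, writing $A=C\oplus A'$ one gets $\mathcal G_n^{\mathrm{ab}}\cong\mathcal G_{n-1}^{\mathrm{ab}}\oplus A'$, so no torsion is ever created. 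For an arbitrary $H\in\mathcal L$ I would invoke the structural description of groups in $\mathcal L$ coming from \cite[Theorem~B]{SZ14} together with \cite{KZ}, which exhibits such an $H$ as the fundamental pro-$p$ group of a graph of groups whose vertex groups are free or free abelian pro-$p$ and whose edge groups are procyclic, and then run the same abelianisation computation on that decomposition.

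With torsion-freeness in hand the conclusion is immediate: as $d(H)\geq 2$ we have $H^{\mathrm{ab}}\cong \bbZ_p^{r}$ with $r\geq 2$, while $\overline{\langle\bar x\rangle}$, being procyclic, has $\bbZ_p$-rank at most $1$; hence $H^{\mathrm{ab}}/\overline{\langle\bar x\rangle}$ has torsion-free rank at least $r-1\geq 1$ and is infinite, so $|H:N|$ is infinite and in particular $|H:\langle x\rangle^{H}|\geq 3$. It is worth noting where the actual content sits: for $p$ odd the bound is cheap, since $d(H)\geq2$ forces $\langle x\rangle$ to be proper, its normal closure $N$ is then proper, and any proper open subgroup of a pro-$p$ group has index divisible by $p\geq 3$; the only delicate case is $p=2$ and $|H:N|=2$, which is precisely what torsion-freeness of $H^{\mathrm{ab}}$ excludes. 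Accordingly, the main obstacle is the structural input of the second paragraph, namely establishing torsion-freeness of the abelianisation for \emph{every} finitely generated subgroup $H\in\mathcal L$, and not merely for the explicitly built groups $\mathcal G_n$.
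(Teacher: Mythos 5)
Your reduction to the abelianisation is fine as far as it goes, but the step you yourself flag as ``the main obstacle'' --- torsion-freeness of $H^{\mathrm{ab}}$ for \emph{every} $H\in\mathcal L$ --- is a genuine gap, and the route you sketch for it does not close. The pushout computation works for the ambient groups $\mathcal G_n$ precisely because $C$ is a \emph{direct summand} of $A$, so the amalgamation kills nothing and creates no torsion. For an arbitrary finitely generated subgroup $H$, the decomposition provided by \cite[Theorem~B]{SZ14} has vertex groups that are merely finitely generated members of $\mathcal L$ (not free or free abelian), and its procyclic edge groups need not sit as direct summands in the vertex groups; amalgamating over a non-summand procyclic subgroup can create torsion in the abelianisation. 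For instance $\langle a\rangle\amalg_{a^2=b^2}\langle b\rangle$ (with $\langle a\rangle\cong\langle b\rangle\cong\bbZ_2$) has abelianisation $\bbZ_2\oplus\bbZ/2\bbZ$. That particular group is excluded from $\mathcal L$ by \cite[Theorem~7.3]{KZ}, but the example shows that ``run the same abelianisation computation on that decomposition'' is not a proof. The abstract analogue of your claim is in fact false: non-orientable surface groups of genus at least $4$ are limit groups, i.e.\ finitely generated subgroups of iterated extensions of centralizers of free groups, yet their abelianisations contain $\bbZ/2\bbZ$. So torsion-freeness of $H^{\mathrm{ab}}$ for subgroups, as opposed to the groups $\mathcal G_n$ themselves, cannot be waved through and would need an independent argument.

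The good news is that the lemma does not need this input, and your own setup nearly finishes without it. If $|H:\langle x\rangle^H|\leq 2$ then $H/\langle x\rangle^H$ is abelian, so $[H,H]\leq\langle x\rangle^H$ and $H^{\mathrm{ab}}/\overline{\langle\bar x\rangle}$ has order at most $2$; hence $H^{\mathrm{ab}}$ is generated by $\bar x$ together with at most one further element, forcing $d(H)\leq 2$ and thus $d(H)=2$. By \cite[Theorem~7.3]{KZ}, $H$ is then either the free pro-$2$ group of rank $2$ or $\bbZ_2^2$, and in both cases $H^{\mathrm{ab}}\cong\bbZ_2^2$ modulo a procyclic subgroup is infinite, a contradiction. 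This is essentially the paper's argument: for odd $p$ the proper normal closure has index divisible by $p\geq 3$; for $p=2$ and $d(H)\geq 3$ one counts generators of $H/[H,H]$ modulo the procyclic image of $\langle x\rangle^H$ to get index at least $4$; and the case $d(H)=2$ is handled by the classification. The only structural fact about $\mathcal L$ that is actually needed is the description of its $2$-generated members, not torsion-freeness of all abelianisations.
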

\begin{proof}
    Since $H$ is not procyclic, then $\langle x \rangle^H$ is necessarily a proper normal subgroup of $H$, as remarked earlier, so we can restrict ourselves to the case $p=2$. If $d(H)=2$, then it is either the free pro-2 group of rank 2 or $\mathbb Z_2^2$ (\cite[Theorem 7.3]{KZ}) and for both the conclusion holds. Suppose $d(H)>2$, let $N=\langle x \rangle^H$ and let $\Phi(H)$ be the Frattini subgroup of $H$. If $x \in \Phi(H)$, the statement follows since $d(H)>2$. Suppose now that $x \notin \Phi(H)$ and let $H'=[H,H]$. Notice that $d(H/H') > 2$, so $\langle x \rangle^HH'/H' = \langle x \rangle H'/H'$ is a procyclic subgroup of a group which is at best 3-generated. Thus, we have $|H:\langle x \rangle^H| \geq 4$.
\end{proof}

\begin{corollary}
    The groups in $\mathcal L$ are either solvable or randomly free.
\end{corollary}
\begin{proof}
    Let $G$ be a pro-$p$ group in $\mathcal L$. If $G$ has weight 0, then it is a subgroup of a finitely generated free pro-$p$ group, so it's either solvable or randomly free. Suppose that $G$ has weight $n\geq 1$. By \cite[Theorem B]{SZ14}, $G$ splits as the fundamental group of a graph of pro-$p$ groups $(\mathcal G, \Gamma)$ over a finite graph $\Gamma$ such that the vertex groups are finitely generated and the edge groups are either procyclic or trivial. Since $\Gamma$ is finite, we may suppose the graph of groups is injective and reduced; this means that each vertex and edge group is also in $\mathcal L$. Suppose $G$ is of type I; in this case, since the edge group is procyclic, so is the vertex group, thus $G$ is 2-generated and, by \cite[Theorem 7.3]{SZ14}, isomorphic either to $\mathbb Z_p^2$ or to $F_2$, the free pro-$p$ group in 2 generators. Suppose $G$ is of type II. Since the edge group is procyclic, a combination of \cref{lem:index_normal_closure}, \cref{cor:graphs} and the fact that the groups in $\mathcal L$ are commutative-transitive (\cite[Theorem 5.1]{KZ}) yields that $G$ is abelian. Finally, if $(\mathcal G, \Gamma)$ is not of type I or II, then $G$ is randomly free by \cref{thm:graphs_of_groups}. 
\end{proof}

It remains to understand what happens with the other two essential types of free constructions: free products with amalgamation and HNN-extensions. For these, one cannot prove a general probabilistic Tits alternative result. Indeed, take $R_p$ to be the finitely generated pro-$p$ group which satisfies an identity, but is not solvable, constructed in \cite[Corollary 10]{CM07}. The group 
\[(C_2 \times R_2) \amalg_{R_2}(C_2 \times R_2)\] is a proper free pro-2 product with amalgamation and, if $\varphi \in \mathrm{Aut}(R_p)$ is a $p$-element, the group 
\[\mathrm{HNN}(R_p,R_p,\varphi) = R_p \rtimes_\varphi \mathbb Z_p \] 
is a proper pro-$p$ HNN-extension. They are not solvable, but they satisfy an identity, so they are not randomly free. Notice that both of these examples fall into the type I and type II cases defined previously. Outside of that, we are able to give a full answer.

\begin{corollary}
    Let $G$ be a pro-$p$ group that splits either as proper pro-$p$ HNN-extension $\mathrm{HNN}(H,A,f)$ with $A\neq H\neq f(A)$ or as a proper free pro-$p$ product with amalgamation $G_1\amalg_H G_2$ with $\max \{|G_1:H|,|G_2:H|\} > 2$. Then $G$ is randomly free.
\end{corollary}

\begin{proof}
    This follows from the fact that, in either case, $G$ splits a fundamental group of a graph of pro-$p$ groups over a finite graph which is neither of type I nor of type II, so we are within the hypothesis of \cref{thm:graphs_of_groups}.
\end{proof}

\begin{remark}
    In this remark, we distinguish between topological and abstract generation. For a profinite group $G$ and a positive integer $k$, consider the measurable sets
    \[Y_k = \{(g_1,\ldots,g_k) \in G^k : \langle g_1,\ldots,g_k \rangle \text{ is a free abstract group of rank $k$}\}\]
    and
    \[X_k = \{(g_1,\ldots,g_k) \in G^k : \overline{\langle g_1,\ldots,g_k \rangle} = G \}.\]
    If $G$ is a finitely generated pro-$p$ group, then $G$ is \emph{positively finitely generated}, that is, there exists a positive integer $k$ such that $\mu(X_k)>0$. In fact, if $d(G) = d$, then we can take $k=d$ by recalling that in this case the Frattini subgroup $\Phi(G)$ is open in $G$ and elements $g_1,\ldots,g_d \in G$ topologically generate $G$ if, and only if, their reductions modulo $\Phi(G)$ generate $G/\Phi(G)$. It follows that if $G = \overline{\langle g_1,\ldots,g_d\rangle}$, then $g_1 \Phi(G) \times \ldots \times g_d \Phi(G) \subseteq X_d$.
    
    Suppose also that $G$ is randomly free. This implies that $\mu(Y_k) = 1$ for every positive integer $k$ and, moreover, that $\mu(Y_d \cap X_d) > 0$. So, for all the examples of finitely generated randomly free pro-$p$ groups considered above, we can not only find dense free abstract subgroups, but we can do so with positive probability.
\end{remark}

   Recall that the \emph{rank gradient} of a finitely generated pro-$p$ group $G$ is defined as
	\begin{equation*}
		\mathrm{RG}(G)=\underset{H\leq_oG}{\mathrm{inf}}\frac{d(H)-1}{|G:H|},
	\end{equation*}
	where $d(H)$ denotes the minimal number of generator of the pro-$p$ group $H$. Moreover, if $M\leq_oG$, then using the Schreier inequality it can be shown that $\mathrm{RG}(M)=|G:M|\mathrm{RG}(G)$. In particular, pro-$p$ groups that have an open subgroup of positive rank gradient are of positive rank gradient. Note that many groups that we have shown to be randomly free have positive rank gradient, see for example \cite[Section 1]{GGK19}.
    
    On the other hand, by \cite[Theorem 1.7]{Ni}, a finitely generated pro-$p$ group of positive rank gradient contains a free non-abelian dense subgroup, so it cannot satisfy any coset identity. We do not know if this result can be generalised to probabilistic identities, so we leave it as a problem.
	\begin{prob}\label{prob:rank_grad}
		Let $G$ be a finitely generated pro-$p$ group of positive rank gradient. Is $G$ randomly free? 
	\end{prob}

	\section{Torsion probabilistic identities}\label{sec:torsion}
    \subsection{Conjugacy classes and virtually free pro-\texorpdfstring{$p$}{p} groups}\label{subsec:virt_free_torsion}
	In this section, we study how to measure conjugacy classes in countably based profinite groups. As an application, using a Lie-theoretic argument that we think is of its own interest, we will give an alternative proof of \cref{vfpprobtits} for torsion words $w=x^k$, $k\in\bbN$.
    
	Given $x\in G$, we denote by $x^G:=\{x^g\mid g\in G\}$ its conjugacy class in $G$. It is clear that $x^G$ is a closed subset of $G$.
	\begin{proposition}\label{pr:ccmeasure}
		Let $G$ be a countably based profinite group and let $x\in G$. Given $\{N_i\}_{i\geq 1}$ a descending chain of open normal subgroups of $G$ with trivial intersection, define 
		\begin{align*}
			& c_1(x)=\underset{i\geq 1}{\mathrm{sup}}|C_{G/N_i}(xN_i)|;\\
			& c_2(x)=\underset{N\unlhd_oG}{\mathrm{sup}}|C_{G/N}(xN)|;\\
			& c_3(x)=\underset{K\unlhd G}{\mathrm{sup}}|C_{G/K}(xK)|,
		\end{align*}
		where $c_3(x)=\infty$ if $C_{G/K}(xK)$ is infinite for some $K\unlhd G$. Then $c_1(x)=c_2(x)=c_3(x):=c(x)$ and \begin{equation}\label{eq:ccmeasure}
			\mu(x^G)=\frac{1}{c(x)}.
		\end{equation}
	\end{proposition}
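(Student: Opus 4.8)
The plan is to funnel everything through the description of $\mu(x^G)$ in terms of finite quotients supplied by \cref{quotient}. First I would record that $x^G$ is closed, being the image of the compact group $G$ under the continuous map $g\mapsto g^{-1}xg$, so that \cref{eq:measurefinitequotients} applies to it. The crucial observation is that for each open normal subgroup $N\unlhd_o G$ the image of $x^G$ in $G/N$ is exactly the conjugacy class $(xN)^{G/N}$; hence, by the orbit--stabiliser formula, $|x^GN/N|=|(xN)^{G/N}|=|G/N|/|C_{G/N}(xN)|$. Substituting into \cref{eq:measurefinitequotients} gives
\[
\mu(x^G)=\inf_{N\unlhd_o G}\frac{|x^GN/N|}{|G/N|}=\inf_{N\unlhd_o G}\frac{1}{|C_{G/N}(xN)|}=\frac{1}{\sup_{N\unlhd_o G}|C_{G/N}(xN)|}=\frac{1}{c_2(x)},
\]
and the same computation along the fixed chain $\{N_i\}$ yields $\mu(x^G)=1/c_1(x)$. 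This simultaneously establishes $c_1(x)=c_2(x)$, the formula \cref{eq:ccmeasure}, and (incidentally) that $c_1(x)$ is independent of the chosen chain; here I read $1/\infty=0$, matching the case of unbounded centraliser orders and $\mu(x^G)=0$.

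It then remains to prove $c_2(x)=c_3(x)$. One inequality is immediate, since every open normal subgroup is in particular a closed normal subgroup, whence $c_2(x)\le c_3(x)$. For the reverse inequality I would fix a closed normal subgroup $K\unlhd G$, set $H=G/K$ and $h=xK$, and bound $|C_H(h)|$ directly, using only the easy ``surjective image'' direction. As $H$ is profinite, $C_H(h)=\varprojlim_{\bar N}\pi_{\bar N}(C_H(h))$ where $\bar N$ ranges over the open normal subgroups of $H$, and consequently $|C_H(h)|=\sup_{\bar N}|\pi_{\bar N}(C_H(h))|$ (the finite orders $|\pi_{\bar N}(C_H(h))|=[C_H(h):C_H(h)\cap\bar N]$ increase to $|C_H(h)|$ and are unbounded precisely when $C_H(h)$ is infinite). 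Since $\pi_{\bar N}(C_H(h))\subseteq C_{H/\bar N}(h\bar N)$, and since the open normal subgroups $\bar N\unlhd_o H=G/K$ correspond to the $N\unlhd_o G$ with $N\supseteq K$ through $H/\bar N\cong G/N$, I obtain
\[
|C_{G/K}(xK)|=\sup_{\bar N}|\pi_{\bar N}(C_H(h))|\le\sup_{\substack{N\unlhd_o G\\ N\supseteq K}}|C_{G/N}(xN)|\le c_2(x).
\]
Taking the supremum over all closed normal $K$ gives $c_3(x)\le c_2(x)$, hence $c_2(x)=c_3(x)=:c(x)$, completing the proof.

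The only genuine subtlety, and the step I would handle most carefully, is the bookkeeping with infinite centralisers and the reading of the suprema. If $C_{G/K}(xK)$ is infinite for some closed normal $K$, the displayed inequality forces $c_2(x)=\infty$, so $c_2(x)=c_3(x)=\infty$ in accordance with the stated convention; when all the relevant centralisers are finite, the suprema are genuine and the argument above is literal. I would emphasise that \emph{no} analogue of the finite-group inequality $|C_{H/N}(hN)|\le|C_H(h)|$ is needed: the identification $\pi_N(x^G)=(xN)^{G/N}$ together with \cref{quotient} does all of the measure-theoretic work, and the passage to a closed quotient $G/K$ relies on nothing beyond the elementary fact that the order of a profinite group is the supremum of the orders of its finite continuous images.
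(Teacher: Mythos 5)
Your proposal is correct and follows essentially the same route as the paper: the identity $|x^GN/N|=|G/N|/|C_{G/N}(xN)|$ combined with \cref{quotient} gives $\mu(x^G)=1/c_1(x)=1/c_2(x)$, and the inequality $c_3(x)\le c_2(x)$ is obtained by writing $C_{G/K}(xK)$ as an inverse limit of its finite images and embedding those images into centralisers in finite quotients of $G$. The only cosmetic difference is that the paper indexes the relevant finite quotients of $G/K$ as $G/KN$ for arbitrary $N\unlhd_o G$, whereas you use the open normal subgroups of $G$ containing $K$; these are the same family.
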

	\begin{proof}
		Since for any $N\unlhd_o G$ one has $x^GN/N=(xN)^{G/N}$, it follows that
		\begin{equation*}\label{eq:orbstab}
			|x^GN/N|=|(xN)^{G/N}|=|G/N:C_{G/N}(xN)|=\frac{|G/N|}{|C_{G/N}(xN)|}. 
		\end{equation*} 
		By \cref{quotient}, one obtains that 
        
		\begin{equation*}
			\mu(x^G)=\underset{i\geq 1}{\mathrm{inf}}\frac{1}{|C_{G/N_i}(xN_i)|}=\underset{N\unlhd_oG}{\mathrm{inf}}\frac{1}{|C_{G/N}(xN)|}. 
		\end{equation*} Therefore $c_1(x)=c_2(x)$ and $\mu(x^G)=1/c_i(x)$ for $i=1,2$. Since $c_2(x)\leq c_3(x)$, it remains to verify that $c_3(x)\leq c_2(x)$. Let $K\unlhd G$ and consider the quotient group $G/K\simeq \underset{N\unlhd_oG}\varprojlim\frac{G}{KN}$.
        Since $C_{G/K}(xK)$ is a closed subgroup of $G/K$, one has that 
		\begin{equation}\label{eq:centrquotient}
			C_{G/K}(xK)\simeq\underset{N\unlhd_oG}\varprojlim\frac{C_{G/K}(xK)KN}{KN}.
		\end{equation}
		Moreover, $C_{G/K}(xK)KN/KN\leq C_{G/KN}(xKN)$ for every $N\unlhd_oG$. Therefore \cref{eq:centrquotient} implies the following chain of inequalities 
		\begin{equation*}
			|C_{G/K}(xK)|=\underset{N\unlhd_oG}{\mathrm{sup}}\left\vert\frac{C_{G/K}(xK)KN}{KN}\right\vert\leq \underset{N\unlhd_oG}{\mathrm{sup}}|C_{G/KN}(xKN)|\leq c_2(x).
		\end{equation*}
		Since the previous inequality holds for every $K\unlhd G$, it follows that $c_3(x)\leq c_2(x)$.
	\end{proof}
	An immediate consequence of \cref{pr:ccmeasure} is the following sufficient condition to have conjugacy classes of measure zero in a countably based profinite group.
	\begin{corollary}\label{criterium}
		Let $G$ be a countably based profinite group and let $x\in G$. Suppose that there exists $K\unlhd G$ such that $|C_{G/K}(xK)|=\infty$. Then $\mu(x^G)=0$.
	\end{corollary}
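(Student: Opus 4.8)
The plan is to read this off directly from \cref{pr:ccmeasure}, of which it is an immediate specialisation. By hypothesis there is a closed normal subgroup $K \unlhd G$ for which the centraliser $C_{G/K}(xK)$ is infinite. According to the definition of the quantity $c_3(x)$ recorded in the proposition, the presence of even a single such $K$ forces $c_3(x) = \infty$; this is precisely the convention stipulated there.

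Next I would invoke the two conclusions of \cref{pr:ccmeasure}: first, the chain of equalities $c(x) := c_1(x) = c_2(x) = c_3(x)$, which collapses all three suprema to a common value; and second, the measure formula $\mu(x^G) = 1/c(x)$. Combining these with $c_3(x) = \infty$ gives $c(x) = \infty$, whence $\mu(x^G) = 0$ under the evident convention that $1/\infty = 0$. This closes the argument, with no auxiliary estimates required.

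There is essentially no obstacle to overcome at the level of the corollary itself: the entire force of the statement lies in the inequality $c_3(x) \leq c_2(x)$ established inside \cref{pr:ccmeasure}, which is what allows an \emph{arbitrary} (possibly non-open) normal subgroup $K$ with infinite centraliser quotient to influence the measure of $x^G$, even though $\mu(x^G)$ is computed only from the open normal subgroups via \cref{quotient}. That comparison is handled there through the inverse-limit decomposition of $C_{G/K}(xK)$ in \cref{eq:centrquotient}; once it is available, the corollary is a one-line consequence and needs no further input.
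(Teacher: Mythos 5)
Your proposal is correct and is exactly the argument the paper intends: the corollary is stated as an immediate consequence of \cref{pr:ccmeasure}, obtained by noting that the hypothesis forces $c_3(x)=\infty$ and then applying $c_1(x)=c_2(x)=c_3(x)$ together with $\mu(x^G)=1/c(x)$. Your closing remark correctly identifies that the substantive content (the inequality $c_3(x)\leq c_2(x)$ for arbitrary closed normal subgroups) lives in the proposition, not in the corollary.
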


	Next we introduce the Lie rings in which we are interested. Given an abstract or profinite group $G$, let \[ L(G)=\bigoplus_{i\geq 1}\frac{\gamma_i(G)}{\gamma_{i+1}(G)}\] be the associated Lie ring with respect to its lower central series $\gamma_i(G)$. Recall that for a given  Lie ring $L$ a Lie ring endomorphism $\alpha\colon L\to L$ is called regular if $x^\alpha\neq x$ for every $x\in L$, $x\neq 0$. Moreover, 
    by a result of V.A.~Kreknin,
    a Lie ring that admits a finite order automorphism that is regular needs to be solvable (\cite[Theorem 4.3.1]{Khu93}).
	\begin{lemma}\label{lm:abstract}
		Let $F$ be a finitely generated non-abelian free abstract group and let $\alpha\in\mathrm{Aut}(F)$ be 
        of finite order. Then there is $n\in\bbN$ such that $\alpha$ has infinitely many fixed points on $F/\gamma_{n+1}(F)$.
	\end{lemma}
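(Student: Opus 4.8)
The plan is to transfer the question to the associated graded Lie ring and invoke Kreknin's theorem. Let $m$ be the order of $\alpha$. Since $\alpha$ preserves the lower central series $\gamma_i(F)$, it induces a graded Lie ring automorphism $\bar\alpha$ on $L(F)=\bigoplus_{i\geq 1}\gamma_i(F)/\gamma_{i+1}(F)$ satisfying $\bar\alpha^m=\id$, so $\bar\alpha$ has finite order. By the Magnus--Witt theorem, $L(F)$ is the free Lie ring over $\bbZ$ on a set in bijection with a free basis of $F$; as $F$ is non-abelian free, this free Lie ring is \emph{not} solvable, because its derived series never terminates.

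Next I would apply the theorem of Kreknin recalled above in contrapositive form: a Lie ring carrying a regular automorphism of finite order is solvable. Since $L(F)$ is not solvable and $\bar\alpha$ has finite order, $\bar\alpha$ cannot be regular; hence there exists a nonzero $y\in L(F)$ with $\bar\alpha(y)=y$. Because $\bar\alpha$ respects the grading, each homogeneous component of $y$ is again fixed, so I may replace $y$ by a nonzero \emph{homogeneous} fixed element lying in a single graded piece $\gamma_n(F)/\gamma_{n+1}(F)$ for some $n\geq 1$. Extracting a homogeneous fixed vector is the crucial bridge: it is exactly what allows the Lie-theoretic fixed point to be read off inside a single central factor of a nilpotent quotient.

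Finally I would lift this to the group $G:=F/\gamma_{n+1}(F)$. The subgroup $\gamma_n(F)/\gamma_{n+1}(F)$ is central in $G$, since $[\gamma_n(F),F]=\gamma_{n+1}(F)$, and it is free abelian of finite rank (by Witt's formula), hence torsion-free. The automorphism induced by $\alpha$ on $G$ restricts on this central subgroup to precisely the degree-$n$ component of $\bar\alpha$; therefore $y$, regarded as an element of $G$, is fixed. As $\gamma_n(F)/\gamma_{n+1}(F)$ is torsion-free, the powers $y^k$ with $k\in\bbZ$ are pairwise distinct and each fixed by the induced automorphism on $G$. Thus $\alpha$ has infinitely many fixed points on $F/\gamma_{n+1}(F)$, which is the assertion. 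I expect the only genuine obstacle to be the conceptual input, namely combining the non-solvability of the free Lie ring with Kreknin's theorem to force a nonzero fixed element; once a homogeneous fixed element is secured, the descent to the group is routine.
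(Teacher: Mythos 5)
Your proposal is correct and follows essentially the same route as the paper: non-solvability of the free Lie ring $L(F)$, Kreknin's theorem to rule out regularity of the induced finite-order automorphism, extraction of a nonzero homogeneous fixed element in some $\gamma_n(F)/\gamma_{n+1}(F)$, and torsion-freeness of that central factor to produce infinitely many fixed points. Your explicit justification of why the fixed element may be taken homogeneous (via the grading) is a point the paper leaves implicit, but the argument is the same.
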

	\begin{proof}
		Suppose that $F$ is the free group on $X$, where $|X|\geq2$. $L(F)$ is not solvable since by \cite[Theorem 4.6.1]{Ser65} it is the free Lie ring on $X$. Then, being of finite order, $\alpha$ cannot be regular on $L(F)$. That is, there is a non-trivial element $x\gamma_{n+1}(F)\in\gamma_{n}(F)/\gamma_{n+1}(F)$ such that $x^\alpha\gamma_{n+1}(F)=x\gamma_{n+1}(F)$. Since $F/\gamma_{n+1}(F)$ is torsion-free and every element of $\{x^i\gamma_{n+1}(F)\}_{i\geq1}$ is fixed by $\alpha$, 
        this yields the claim.
      \end{proof}
	 We show that the same statement holds in the pro-$p$ setting. 
	\begin{lemma}\label{pr:adaptation}
		Let $F$ be a finitely generated non-abelian free pro-$p$ group and let $\alpha\in\mathrm{Aut}(F)$ be of finite order. Then there is $n\in\bbN$ such that $\alpha$ has infinitely many fixed points on $F/\gamma_{n+1}(F)$.
	\end{lemma}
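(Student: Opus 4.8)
The plan is to follow the proof of \cref{lm:abstract} verbatim at the level of the associated graded Lie ring, supplying the two pro-$p$ structural facts that replace the inputs used there. Write $\Phi$ for the abstract free group on $X$ (so that $F$ is its pro-$p$ completion) and $L(F)=\bigoplus_{i\ge 1}\gamma_i(F)/\gamma_{i+1}(F)$ for the Lie ring attached to the lower central series of $F$. Since each $\gamma_i(F)$ is a characteristic closed subgroup of $F$, the finite-order automorphism $\alpha$ preserves the filtration and therefore induces a graded Lie-ring automorphism $\bar\alpha$ of $L(F)$ with $\bar\alpha^{\mathrm{ord}(\alpha)}=\id$.

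The crucial input is that $L(F)$ is \emph{not} solvable. I would deduce this from the abstract case as follows. For each $i$ the quotient $\gamma_i(\Phi)/\gamma_{i+1}(\Phi)$ is a finitely generated free abelian group of some rank $r_i$, and $\gamma_i(F)/\gamma_{i+1}(F)$ is its pro-$p$ completion, namely a free $\bbZ_p$-module $\bbZ_p^{r_i}$. Hence the natural maps $\gamma_i(\Phi)/\gamma_{i+1}(\Phi)\to\gamma_i(F)/\gamma_{i+1}(F)$ are the inclusions $\bbZ^{r_i}\hookrightarrow\bbZ_p^{r_i}$, and assembling them yields an injective homomorphism of graded Lie rings $L(\Phi)\hookrightarrow L(F)$. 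As established in the proof of \cref{lm:abstract}, $L(\Phi)$ is non-solvable; therefore so is $L(F)$. (Equivalently, $L(F)$ is the free Lie $\bbZ_p$-algebra on $X$, which is non-solvable because $|X|\ge 2$.)

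With this in hand the argument proceeds exactly as in \cref{lm:abstract}. Being of finite order, $\bar\alpha$ cannot be a regular automorphism of the non-solvable Lie ring $L(F)$, for otherwise Kreknin's theorem (\cite[Theorem 4.3.1]{Khu93}) would force $L(F)$ to be solvable. Thus $\bar\alpha$ fixes some nonzero $v\in L(F)$; since $\bar\alpha$ is graded, each homogeneous component of $v$ is fixed as well, so there is an integer $n$ and a nonzero homogeneous fixed vector $x\gamma_{n+1}(F)\in\gamma_n(F)/\gamma_{n+1}(F)$, that is, $x^\alpha\gamma_{n+1}(F)=x\gamma_{n+1}(F)$.

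It remains to promote this single fixed point to infinitely many, and here the second pro-$p$ fact enters: $F/\gamma_{n+1}(F)$ is torsion-free. This follows by induction on $n$ from the freeness of the modules $\gamma_i(F)/\gamma_{i+1}(F)$, using that in a nilpotent pro-$p$ group an extension of a torsion-free group by a torsion-free lower-central factor is torsion-free --- any element of order $p$ would project to a torsion element of the torsion-free quotient, hence lie in the torsion-free kernel, hence be trivial. Consequently $x\gamma_{n+1}(F)$ has infinite order, and the infinitely many distinct elements $x^i\gamma_{n+1}(F)$ with $i\in\bbZ$ are all fixed by $\alpha$, which proves the lemma. The routine part is the homogeneity/Kreknin step, identical to the abstract setting; I expect the main obstacle to be the justification that each $\gamma_i(F)/\gamma_{i+1}(F)$ is a free $\bbZ_p$-module and that the comparison map $L(\Phi)\to L(F)$ is injective on graded pieces, since both the non-solvability of $L(F)$ and the torsion-freeness of its nilpotent quotients rest on this structural fact.
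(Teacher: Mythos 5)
Your proof is correct and follows essentially the same route as the paper's: embed the graded Lie ring of the abstract free group into $L(F)$ to conclude $L(F)$ is non-solvable, apply Kreknin's theorem to the induced graded automorphism, and use torsion-freeness of $F/\gamma_{n+1}(F)$ to produce infinitely many fixed points. The only difference is bookkeeping: where you justify injectivity of $L(\Phi)\to L(F)$ via the identification of the lower central factors with $\bbZ^{r_i}\hookrightarrow\bbZ_p^{r_i}$ (the point you rightly flag as the one needing care), the paper gets the same injectivity directly from Lubotzky's lemma that $\gamma_i(F)\cap \Phi=\gamma_i(\Phi)$.
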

	\begin{proof} 
		Let $E$ be an abstract non-abelian free group whose pro-$p$ completion is $F$. If we denote by $E_i$ the $i$-th term of the lower central series of $E$ and by $F_i$ the analogous for $F$, we have that $F_i\cap E=E_i$ by \cite[Lemma 2.6]{Lub82}. This implies that the Lie ring homomorphism $\phi\colon L(E)\to L(F)$ defined componentwise as
		\begin{align*}
			\phi\colon &E_i/E_{i+1}\to F_i/F_{i+1}\\
			& xE_{i+1}\to xF_{i+1}
		\end{align*}
	    is injective. Indeed, if $x,y\in E_i$ such that $xF_{i+1}=yF_{i+1}$, then $xy^{-1}\in F_{i+1}\cap E=E_{i+1}$, that is $xE_{i+1}=yE_{i+1}$. It follows that $L(F)$ is not solvable since it contains a sub-Lie ring that is not solvable. Since $F/F_i$ is torsion-free for every $i$, the argument used in the proof of \cref{lm:abstract} can be transferred verbatim.
	\end{proof}
	It is now possible to measure conjugacy classes of torsion elements in a particular class of profinite groups.
	\begin{proposition}\label{generalized}
		Let $G$ be a countably based profinite group and let $F\unlhd G$ be a finitely generated non-abelian free pro-$p$ group. Then $\mu(t^G)=0$ for every torsion element $t\in G$.
	\end{proposition}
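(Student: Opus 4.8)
The plan is to reduce the statement to \cref{criterium}: it suffices to produce, for each torsion element $t \in G$, a normal subgroup $K \unlhd G$ for which the centralizer $C_{G/K}(tK)$ is infinite. The mechanism that makes this work is that $F$ is normal in $G$, so conjugation by $t$ restricts to an automorphism $\alpha$ of $F$, given by $\alpha(f) = tft^{-1}$. Since $\alpha$ is the image of $t$ under the conjugation homomorphism $G \to \mathrm{Aut}(F)$, its order divides the order of $t$; in particular $\alpha$ has finite order, which is exactly the hypothesis required to invoke the Lie-theoretic \cref{pr:adaptation}.

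Concretely, I would first fix $t$ together with the finite-order automorphism $\alpha$ of $F$ just described. Applying \cref{pr:adaptation}, I obtain an integer $n \in \bbN$ such that the automorphism $\bar\alpha$ induced by $\alpha$ on $F/\gamma_{n+1}(F)$ has infinitely many fixed points. I would then set $K = \gamma_{n+1}(F)$: since this term of the lower central series is characteristic in $F$ and $F \unlhd G$, it is normal in $G$, so it is a legitimate choice of $K$ for \cref{criterium}.

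The remaining step is to match the fixed points of $\bar\alpha$ with elements of the centralizer $C_{G/K}(tK)$. An element $fK \in F/K$ satisfies $\bar\alpha(fK)=fK$ precisely when $tft^{-1}f^{-1} \in K$, and this is exactly the condition that $tK$ and $fK$ commute in $G/K$. Hence the infinitely many fixed cosets of $\bar\alpha$, being distinct elements of $F/K \leq G/K$, all lie in $C_{G/K}(tK)$, which is therefore infinite; \cref{criterium} then yields $\mu(t^G)=0$.

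I do not expect a serious obstacle beyond two bookkeeping points: correctly identifying the fixed-point condition of $\bar\alpha$ with the commuting condition in $G/K$, and checking that $\gamma_{n+1}(F)$ is normal in the \emph{ambient} group $G$ (not merely in $F$), which follows from its being characteristic in the normal subgroup $F$. The genuine content of the proposition is entirely packaged inside \cref{pr:adaptation}, which supplies the abundance of fixed points because the free Lie ring is non-solvable and a finite-order automorphism therefore cannot be regular; the present statement is the group-theoretic translation of that fact through the centralizer criterion of \cref{pr:ccmeasure}.
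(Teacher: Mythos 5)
Your proposal is correct and follows essentially the same route as the paper's proof: restrict conjugation by $t$ to a finite-order automorphism of $F$, apply \cref{pr:adaptation} to get infinitely many fixed points on $F/\gamma_{n+1}(F)$, take $K=\gamma_{n+1}(F)$ (normal in $G$ since it is characteristic in $F\unlhd G$), identify the fixed cosets with elements of $C_{G/K}(tK)$, and conclude by \cref{criterium}. Your observation that the order of the induced automorphism merely \emph{divides} $|t|$ is a slightly more careful phrasing than the paper's, but the argument is the same.
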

	\begin{proof}
		Let $c_t\in\mathrm{Aut}(G)$ be the conjugation by $t$. Since $F\unlhd G$, we have that ${c_t}_{|_F}\in\mathrm{Aut}(F)$ and $|c_t|=|t|<\infty$. By \cref{pr:adaptation}, $c_t$ has infinitely many fixed points on $F/\gamma_{n+1}(F)$ for some $n$. Recall that $\gamma_{n+1}(F)$ is a characteristic subgroup of $F$ that is normal in $G$, therefore it is a normal subgroup of $G$. Then  $C_{G/\gamma_{n+1}(F)}(t\gamma_{n+1}(F))$ is infinite since it contains $C_{F/\gamma_{n+1}(F)}(t\gamma_{n+1}(F))$. Therefore $\mu(t^G)=0$ by \cref{criterium}.
	\end{proof}
	In particular, if $G$ is a finitely generated virtually free pro-$p$ group that is not virtually procyclic, then $\mu(t^G)=0$ for every torsion element $t\in G$. Moreover, there are just finitely many conjugacy classes of torsion elements by \cite[Lemma 8]{HZ08}. Therefore one obtains an alternative proof of \cref{vfpprobtits} for torsion words $w=x^k$, $k\in\bbN$.

\subsection{Torsion in compact \texorpdfstring{$p$}{p}-adic analytic groups}\label{subsec:p-adic_torsion}

We now focus on torsion identities in compact $p$-adic analytic groups. We start by introducing some notions from \cite{Ser65} that will be used throughout this section. Given a $K$-analytic group $G$, we denote by $\mathcal{L}(G)$ its associated $K$-Lie algebra, defined as the tangent space at the identity $1_G$. If $f\colon G_1\to G_2$ is an analytic map of analytic groups that preserves the identity, we denote by $df\colon\mathcal{L}(G_1)\to\mathcal{L}(G_2)$ its differential. If $f$ is also a group homomorphism, we have that $df$ is a Lie algebra homomorphism. 

When $K=\bbQ_p$, it is possible to describe $\mathcal{L}(G)$ in terms of open uniform pro-$p$ subgroups of $G$ as in \cite[Ch.~9]{DDMS}. We recall that a pro-$p$ group $H$ is \emph{uniform} if it is finitely generated, powerful and torsion-free. Moreover, a uniform pro-$p$ group $H$ can be endowed with a $\bbZ_p$-Lie algebra structure and we use the notation $L_H$ when we refer to $H$ with this structure. If $G$ is a $p$-adic analytic group, then $\mathcal{L}(G)\simeq \bbQ_p\otimes L_H$ for every open uniform pro-$p$ subgroup $H$. Moreover, if $f\colon G_1\to G_2$ is an homomorphism of $p$-adic analytic groups, then $df=1\otimes f_0$ where $f_0$ is the restriction of $f$ to a suitable open uniform pro-$p$ subgroup of $G_1$. Indeed, since $\mathcal L(G_i)$ does not depend on the choice of open uniform subgroup, one may take an open uniform subgroup $H_2$ of $G_2$ and choose an open uniform subgroup $H_1$ in $f^{-1}(H_2)$, in such a way that $f_0 = f|_{H_1}$ allows us to properly define $df$.

Given an automorphism $\alpha\in\mathrm{Aut}(G)$, we say that $\alpha$ is fixed-point-free on $X\subseteq G$ if $\alpha$ does not fix any $x\in X\smallsetminus{1}$. We say that $\alpha$ is \emph{uniformly} fixed-point-free if it is fixed-point-free on every open uniform pro-$p$ subgroup. Note that for $\alpha\in\mathrm{Aut}(G)$ the differential $d\alpha\colon \mathcal{L}(G)\to\mathcal{L}(G)$ is a $\bbQ_p$-Lie algebra automorphism.
\begin{lemma}\label{ufpf}
Let $G$ be a $p$-adic analytic group and $\alpha\in\mathrm{Aut}(G)$. The following are equivalent:
\begin{enumerate}
\item[(i)] If $\alpha$ is fixed-point-free on an open uniform pro-$p$ subgroup $H$;
\item[(ii)] $\alpha$ is uniformly fixed-point-free;
\item[(iii)] $d\alpha$ is fixed-point-free.
\end{enumerate}
\end{lemma}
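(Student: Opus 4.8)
The plan is to reduce all three equivalences to a single biconditional: for \emph{every} open uniform pro-$p$ subgroup $H \leq G$, the automorphism $\alpha$ is fixed-point-free on $H$ if and only if $d\alpha$ is fixed-point-free on $\mathcal{L}(G)$. Since the right-hand condition does not mention $H$, this biconditional immediately yields (iii) $\Rightarrow$ (ii), while (ii) $\Rightarrow$ (i) is trivial once we recall that $G$ contains at least one open uniform pro-$p$ subgroup, and (i) $\Rightarrow$ (iii) is the biconditional applied to the witnessing subgroup. So everything comes down to that one statement.

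To prove the biconditional for a fixed open uniform $H$, first I would observe that $\alpha(H)$ is again open uniform, because $\alpha$ is a topological automorphism and hence maps $H$ isomorphically onto an open subgroup. The restriction $\alpha|_H \colon H \to \alpha(H)$ is then an isomorphism of uniform pro-$p$ groups, and by the description of the differential recalled above ($df = 1\otimes f_0$ for a suitable uniform subgroup) its induced $\bbZ_p$-Lie algebra map $L_H \to L_{\alpha(H)}$ extends $\bbQ_p$-linearly to $d\alpha$; concretely $d\alpha(\log x) = \log \alpha(x)$ for all $x \in H$, and in particular $d\alpha(L_H) = L_{\alpha(H)}$. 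Combining this with the fact that $\log$ and $\exp$ are intrinsic (compatible with inclusions of uniform subgroups) and that $L_H$ is a full $\bbZ_p$-lattice in $\mathcal{L}(G)$, the target identity I would aim for is
\[ \log\bigl(\mathrm{Fix}(\alpha)\cap H\bigr) = \mathrm{Fix}(d\alpha)\cap L_H, \]
where $\mathrm{Fix}(\alpha)=\{g\in G: \alpha(g)=g\}$ and $\mathrm{Fix}(d\alpha)=\ker(d\alpha-\mathrm{id})$.

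The inclusion $\subseteq$ is direct: if $\alpha(x)=x$ with $x\in H$, then $x\in H\cap\alpha(H)$, so $d\alpha(\log x)=\log\alpha(x)=\log x$ by compatibility of the logarithm on the two uniform groups. For $\supseteq$, given $w\in L_H$ with $d\alpha(w)=w$, I would note $w=d\alpha(w)\in d\alpha(L_H)=L_{\alpha(H)}$, set $x=\exp(w)\in H$, and compute $\alpha(x)=\exp(d\alpha\, w)=\exp(w)=x$, the middle equality using that $\exp$ agrees on $L_H$ and $L_{\alpha(H)}$ for the common element $w$. Once this identity is in hand, injectivity of $\log$ gives $\mathrm{Fix}(\alpha)\cap H=\{1\}$ if and only if $\mathrm{Fix}(d\alpha)\cap L_H=0$, and fullness of the lattice $L_H$ upgrades the latter to $\mathrm{Fix}(d\alpha)=0$; this is exactly the desired biconditional. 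The main obstacle is precisely the bookkeeping in that identity: one must ensure that $d\alpha$, which is canonically defined but a priori computed through one choice of uniform subgroups, genuinely acts as $\log\circ\alpha\circ\exp$ on $L_H$ for the \emph{arbitrary} $H$ at hand, and that the logarithm and exponential of a single element computed in the two (generally distinct) uniform groups $H$ and $\alpha(H)$ agree — both points rest on the naturality and restriction-compatibility of the uniform Lie correspondence rather than on any further computation.
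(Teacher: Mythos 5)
Your proof is correct, but it is organised differently from the paper's. The paper splits the lemma into two separate mechanisms: (i)$\Rightarrow$(ii) is a purely group-theoretic two-line argument (a fixed point $x\neq 1$ in any uniform $K$ has a power $x^n\neq 1$ landing in $H\cap K$ and still fixed, contradicting fixed-point-freeness on $H$), and (ii)$\Leftrightarrow$(iii) is then checked only on the \emph{defining} uniform subgroup $H_0$ for which $d\alpha=1\otimes\alpha|_{H_0}$, using that every element of $\bbQ_p\otimes L_{H_0}$ is a pure tensor $\lambda\otimes h$. You instead prove the single biconditional ``$\alpha$ fixed-point-free on $H$ $\iff$ $d\alpha$ fixed-point-free'' for an \emph{arbitrary} open uniform $H$, via the identity $\log(\mathrm{Fix}(\alpha)\cap H)=\mathrm{Fix}(d\alpha)\cap L_H$ together with fullness of the lattice $L_H$, and you route (i)$\Rightarrow$(ii) through the Lie algebra as (i)$\Rightarrow$(iii)$\Rightarrow$(ii). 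This is a genuine trade: your version yields a slightly stronger and more reusable statement (the fixed-point identity holds for every uniform $H$, not just the defining one), and it dispenses with the paper's power trick; the cost is exactly the point you flag, namely that $d\alpha$, defined through one choice of uniform subgroups $H_1\subseteq\alpha^{-1}(H_2)$, must be shown to act as $\log\circ\,\alpha\circ\exp$ on $L_H$ for the arbitrary $H$ at hand, and that $\exp$ of a common element of $L_H\cap L_{\alpha(H)}$ is unambiguous. Both facts do follow from the restriction-compatibility of the uniform Lie correspondence (e.g.\ by passing to $x^{p^k}\in H_1$ and using $\bbQ_p$-linearity, and to a common open uniform subgroup of $H\cap\alpha(H)$ together with injectivity of $p^k$-th powers in uniform groups), but since they carry essentially the same content as the paper's power argument, your proof is not shorter --- only differently packaged. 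No gap, provided you spell out those two compatibility checks.
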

\begin{proof}
Suppose that $\alpha$ fixes a non-trivial element of a uniform open pro-$p$ subgroup $K$ of $G$. Since $K$ is torsion-free and $H\cap K$ is of finite index in $K$, there is a natural number $n$ such that $1\neq x^n\in H\cap K$. This is a contradiction since $\alpha$ is fixed-point-free on $H$. Therefore $\alpha$ is uniformly fixed-point-free.

Now, let $d\alpha=1\otimes\alpha_0$ where $\alpha_0$ is the restriction of $\alpha$ to an open uniform pro-$p$ subgroup $H_0$. For $h\in H_0$ and $\lambda\in \bbQ_p$, we have $d\alpha(\lambda\otimes h)=\lambda\otimes \alpha(h)$. If $d\alpha$ is fixed-point-free, then $\alpha(h)\neq h$ for every $1\neq h\in H_0$. Therefore $\alpha$ is uniformly fixed-point-free by $(i)$. On the other hand, if $\alpha$ is uniformly fixed-point-free, then $\lambda\otimes\alpha(h)\neq \lambda\otimes h$ for every $\lambda\in\bbQ_p$ and $h\in H_0$. Therefore, $d\alpha$ is fixed-point-free.
\end{proof}
Given an element $g$ of a group $G$, let $c_g\in\rm{Aut}(G)$ be the conjugation by $g$. We are ready to give a characterisation of torsion probabilistic identities in compact $p$-adic analytic groups.
\begin{thm}\label{torsionpadic}
The following are equivalent for a compact $p$-adic analytic group $G$:
\begin{enumerate}
\item[(i)] $\mu(\rm{Tor}(G))>0$;
\item[(ii)] there is $t\in\rm{Tor}(G)$ such that $c_t$\ is a uniformly fixed-point-free automorphism of $G$;
\item[(iii)] there is $ t\in \rm{Tor}(G)$ such that $dc_t$ is a fixed-point-free automorphism of $\mathcal{L}(G)$.
\end{enumerate}
\end{thm}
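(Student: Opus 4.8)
The plan is to organise the argument around condition (iii). The equivalence of (ii) and (iii) is immediate from \cref{ufpf} applied to the automorphism $\alpha=c_t$, so the substance of the proof is the equivalence of (i) with (iii). Throughout I use that, being compact $p$-adic analytic, $G$ contains an open normal uniform (hence torsion-free) pro-$p$ subgroup $U_0$; writing $N=[G:U_0]$, for any $t\in\mathrm{Tor}(G)$ the intersection $\langle t\rangle\cap U_0$ is trivial, so $\langle t\rangle$ embeds into $G/U_0$ and $t^N=1$ by Lagrange. Consequently $\mathrm{Tor}(G)=\{g\in G: g^N=1\}=X(G,x^N)$, and condition (i) says precisely that $w=x^N$ is a probabilistic identity on $G$.

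For (i)$\Rightarrow$(iii) I would first invoke \cref{thm:analytic}: since $G$ is compact (hence $\sigma$-compact) $K$-analytic with $K=\bbQ_p$ and $x^N$ is a probabilistic identity, it is an open coset identity, so there are $H\leq_o G$ and $t\in G$ with $(th)^N=1$ for all $h\in H$; taking $h=1$ gives $t^N=1$, so $t\in\mathrm{Tor}(G)$. Replacing $H$ by $H\cap U_0$, then by $\bigcap_{i=0}^{N-1}t^{-i}Ht^i$, and finally by a uniform term of its lower $p$-series, I may assume $H=U$ is a $t$-invariant open uniform pro-$p$ subgroup on which $(tu)^N=1$ still holds for all $u\in U$. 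Setting $\alpha=c_t|_U$ and using $t^N=1$, one rewrites $(tu)^N=\prod_{i=0}^{N-1}\alpha^{-i}(u)$. Passing to logarithmic coordinates on the $\bbZ_p$-Lie algebra $L_U$ of \cite{DDMS}, the Baker--Campbell--Hausdorff expansion shows that $\log\bigl((t\exp X)^N\bigr)$ has linear part $\sum_{i=0}^{N-1}(d\alpha)^{-i}(X)$. As this vanishes identically in $X$, we obtain $\sum_{i=0}^{N-1}(dc_t)^{-i}=0$ on $\mathcal L(G)$; a nonzero fixed vector $v$ of $dc_t$ would force $Nv=0$, whence $v=0$. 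Thus $dc_t$ is fixed-point-free, which is (iii).

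For (iii)$\Rightarrow$(i) I would again fix a $t$-invariant open uniform pro-$p$ subgroup $U$ and consider the analytic map $\beta\colon U\to U$ given by $\beta(v)=c_{t^{-1}}(v)^{-1}v$. Its differential at the identity is $\mathrm{id}-(dc_t)^{-1}$, which is invertible exactly because $dc_t$, and hence $(dc_t)^{-1}$, has no nonzero fixed vector. By the $p$-adic inverse function theorem $\beta$ is a local analytic isomorphism, so its image contains an open neighbourhood $V$ of $1$ in $U$. For $u=\beta(v)\in V$ a direct computation gives $v(tu)v^{-1}=t$, hence $(tu)^N=1$ and $tV\subseteq\mathrm{Tor}(G)$. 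Since $\mu(tV)=\mu(V)>0$, condition (i) holds.

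The main obstacle is the passage between the group identity and the infinitesimal (Lie-algebra) condition in both directions. In (i)$\Rightarrow$(iii) the delicate points are securing a $t$-invariant uniform subgroup on which the coset identity persists and correctly isolating the linear term of the BCH series; in (iii)$\Rightarrow$(i) it is the observation that the inverse function theorem yields only a \emph{local} section of $\beta$, which nevertheless suffices because every non-empty open subset of $G$ has positive Haar measure.
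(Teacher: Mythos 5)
Your proposal is correct, and its overall skeleton matches the paper's: both directions hinge on \cref{thm:analytic} (to upgrade the probabilistic torsion identity to a coset identity) and on the $p$-adic inverse function theorem (to turn fixed-point-freeness back into a coset of torsion elements), with \cref{ufpf} handling (ii)$\Leftrightarrow$(iii). The differences are in the details. Your preliminary observation that $\mathrm{Tor}(G)=X(G,x^N)$ for $N=[G:U_0]$ is a clean way to see that (i) is literally the statement that $x^N$ is a probabilistic identity; the paper passes through ``$P(G,x^m)>0$ for some $m$'' without spelling this out. For the forward direction, the paper goes from the coset identity to (ii) by a one-line centralizer computation: if every element of $Ht$ has order dividing $m$ and $u\in C_H(t)$, then $(ut)^m=u^mt^m=u^m=1$, so $u=1$ by torsion-freeness of the uniform subgroup $H$; this avoids your $t$-invariance reductions and the Baker--Campbell--Hausdorff analysis entirely. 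Your infinitesimal argument (the vanishing of the linear part $\sum_{i=0}^{N-1}(dc_t)^{-i}$ forcing $Nv=0$ for any fixed vector $v$) is valid --- it amounts to computing the differential of the constant map $u\mapsto(tu)^N$ at the identity --- but it is considerably heavier than needed. For the converse, your map $\beta(v)=c_{t^{-1}}(v)^{-1}v$ with differential $\mathrm{id}-(dc_t)^{-1}$ is a cosmetic variant of the paper's $f(g)=g^tg^{-1}$ with $df=dc_t-\mathrm{id}$ and the identity $tf(g)=t^{g^{-1}}$; both conclude via the local isomorphism theorem from \cite{Ser65} that the image of a coset of torsion elements contains a set of positive measure. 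In short: same strategy, with your forward direction taking a more computational detour where a direct group-theoretic argument suffices.
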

\begin{proof}
By \cref{ufpf}, it is clear that $(ii)$ is equivalent to $(iii)$. Suppose $P(G,x^{m}) > 0$ for some $m\geq 1$. Then there exists an open uniform pro-$p$ subgroup $H\unlhd_o G$ and $t \in {\rm{Tor}}(G)$ such that every element in $Ht$ has order $m$ by \cref{thm:analytic}. If $u \in C_H(t)$, then
\[1=(ut)^{m} = uu^t\cdots u^{t^{m-1}} = u^{m}\]
implies $u=1$, since $H$ is torsion-free. So $c_t$ is a uniformly fixed-point-free automorphism of $G$.
Now, suppose that there is a torsion element $t$, $|t|=m$, such that the conjugation $c_t$ is uniformly fixed-point-free. Consider the analytic map 
	\begin{align*}
		f\colon G &\to G\\
		g &\mapsto g^tg^{-1}
	\end{align*} and note that every element of the form $tf(g)=t^{g^{-1}}$ has order $m$. Since the differential $df=dc_t-\mathrm{id}$ has trivial kernel by our assumption of $c_t$, we have that $df\colon\mathcal{L}(G)\to\mathcal{L}(G)$ is an isomorphism. By \cite[Pt.~2, Ch.~III.9, Theorem~2]{Ser65}, $f$ is a local isomorphism at $1_G$, therefore its image contains an open subgroup $H$. In particular, the word $x^m$ is satisfied on the coset $tH$.
\end{proof}

As an application, we give a different proof of \cref{cor:3.5} in the compact $p$-adic analytic case for torsion words. 
\begin{corollary}\label{cor:krekninpadic}
Let $G$ be a compact $p$-adic analytic group that is not virtually solvable. Then $\mu(\mathrm{Tor}(G))=0$.
\end{corollary}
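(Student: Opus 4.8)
The plan is to argue by contradiction, feeding the positive measure hypothesis into \cref{torsionpadic} to manufacture a regular automorphism of finite order on the Lie algebra, and then invoking Kreknin's theorem to force solvability. So I would suppose, for contradiction, that $\mu(\mathrm{Tor}(G)) > 0$. The first step is to apply the implication (i) $\Rightarrow$ (iii) of \cref{torsionpadic}: this yields a torsion element $t \in \mathrm{Tor}(G)$ for which the differential $dc_t$ is a fixed-point-free automorphism of the $\bbQ_p$-Lie algebra $\mathcal{L}(G)$.

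Next I would check that $dc_t$ is of finite order, so that Kreknin's theorem applies. Writing $m = |t|$, the inner automorphism $c_t$ satisfies $c_t^m = \id$, and since passing to the differential at $1_G$ is functorial we obtain $(dc_t)^m = d(c_t^m) = \id$. Thus $dc_t$ is an automorphism of finite order of $\mathcal{L}(G)$ with no nonzero fixed point, i.e.\ a \emph{regular} automorphism in the sense recalled before \cref{lm:abstract}. At this point Kreknin's theorem \cite[Theorem 4.3.1]{Khu93} — a Lie ring admitting a regular automorphism of finite order is solvable — gives at once that $\mathcal{L}(G)$ is a solvable Lie algebra.

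The final step is to translate solvability of the Lie algebra back into a statement about the group and derive the contradiction. Choosing an open uniform pro-$p$ subgroup $H \leq_o G$, one has $\mathcal{L}(G) \simeq \bbQ_p \otimes L_H$, so $L_H$ is solvable; hence $H$ is a solvable pro-$p$ group and, being of finite index, witnesses that $G$ is virtually solvable, contrary to hypothesis. I expect no serious obstacle here: the substantive analytic content has already been absorbed into \cref{torsionpadic} (via the Weierstra\ss{} Preparation Theorem and the local isomorphism theorem) and the genuinely hard algebra is Kreknin's theorem, which is quoted. The only points demanding care are the two bookkeeping verifications above — that $dc_t$ inherits both finite order and fixed-point-freeness from $c_t$, and that solvability of $\mathcal{L}(G)$ is equivalent to virtual solvability of the compact $p$-adic analytic group $G$ — both of which are routine for uniform pro-$p$ subgroups.
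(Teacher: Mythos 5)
Your proposal is correct and follows the paper's proof essentially verbatim: positive measure gives, via \cref{torsionpadic}, a finite-order fixed-point-free automorphism $dc_t$ of $\mathcal{L}(G)$, Kreknin's theorem \cite[Theorem 4.3.1]{Khu93} forces $\mathcal{L}(G)$ to be solvable, and then an open uniform subgroup $H$ is solvable (the paper cites \cite[Corollary 7.16]{DDMS} for this last transfer), contradicting the hypothesis. Your two extra bookkeeping checks (that $(dc_t)^{|t|}=\id$ by functoriality, and the Lie-algebra-to-group transfer) are exactly the details the paper leaves implicit.
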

\begin{proof}
Suppose that the set of torsion elements of $G$ has positive Haar measure. By \cref{torsionpadic} and \cite[Theorem 4.3.1]{Khu93}, $\mathcal{L}(G)$ is solvable since it admits a finite order fixed-point-free automorphism. In particular, if $H$ is an open uniform pro-$p$ subgroup of $G$, then $L_H$ is solvable as a Lie subalgebra of $\mathcal{L}(G)$. By \cite[Corollary 7.16]{DDMS}, $H$ is solvable. This is a contradiction since $G$ is not virtually solvable.
\end{proof}
In particular, for a $p$-adic analytic pro-$p$ group $G$, we can have $\mu(\mathrm{Tor}(G))>0$ only if $G$ is solvable.

In the virtually nilpotent case, we can give explicit examples of groups satisfying (and not satisfying) probabilistic torsion identities. We make essential use of \cref{thm:analytic}, that is, that a probabilistic torsion identity is also a coset identity.

\begin{example} 
        Let $U=\mathbb Z_3^2$ and consider $t \in \mathrm{GL}_2(\mathbb Z_3)$ to be the matrix
		\[\begin{pmatrix}
			0 & -1 \\ 1 & -1
		\end{pmatrix}.\]
        Notice that $t$ has order 3 and its minimal polynomial is $x^2+x+1$, whose roots are the primitive third roots of unity. Let $G = U \rtimes \langle t \rangle$. By standard calculations, one can check that the every element in the coset $Ut$ has order 3.
    \end{example}
    \begin{example}
        Let $U = \mathbb Z_5^5$ and consider $t \in \mathrm{GL}_5(\mathbb Z_5)$ to be the matrix
		\[\begin{pmatrix}
			0 & 0 & 0 & -1 & 0 \\ 
			1 & 0 & 0 & -1 & 0 \\
			0 & 1 & 0 & -1 & 0 \\
			0 & 0 & 1 & -1 & 0 \\
			0 & 0 & 0 & 0 & 1 
		\end{pmatrix}.\]
		Let $G= U \rtimes \langle t \rangle$. If $G$ satisfies any probabilistic torsion identity, it must be $x^5$. Indeed, a non-trivial torsion element in $G$ must be of the form $ut^{j}$, with $u \in U\smallsetminus \{1\}$ and $1\leq j \leq 4$. Since $x=(ut^j)^5 = uu^tu^{t^2}u^{t^{3}}u^{t^4}$ is in $U$ and $U$ is torsion-free, we must have $x=1$. For any $H \leq_o G$, we can find in $H$ a normal subgroup of the form $5^{n_1}\mathbb Z_5 \times \cdots \times 5^{n_2} \mathbb Z_5$, with each $n_i$ a non-negative integer, so $Ht^{j}$ will have an element of infinite order for any $1\leq j \leq 4$. It follows that $x^5$ is not a coset identity for $G$ and thus not a probabilistic one. 
        \end{example}
        \begin{example}
        Let $p \geq 3$ and let $\zeta$ be a primitive $p$-th root of unity. We consider the ring $R = \bbZ_p[\zeta]$.
        Let 
        \[U = \left\{\begin{pmatrix} 1& x & z \\ 0 & 1 & y \\ 0&0&1 \end{pmatrix} \mid x,y,z \in R \right\}\]
        be the Heisenberg group over $R$ and we define 
        $t = \begin{pmatrix} 1 & 0 & 0 \\ 0 & \zeta & 0 \\ 0 & 0 & \zeta^2 \end{pmatrix}$.
        Then $G = \langle t, U \rangle$ splits as a semidirect product $G = U \rtimes \langle t \rangle$. Since conjugation with $t$ provides a fixed-point-free automorphism of $U$, $x^p$ is a coset identity for $G$.
\end{example}

We remark that all compact $p$-adic analytic groups $G$ which satisfy a torsion probabilistic identity of the form $x^p$ are virtually nilpotent. Indeed, by \cref{torsionpadic}, there exists an open uniform pro-$p$ subgroup $U$ of $G$ and $t \in \mathrm{Tor}(G)$ such that $C_U(t) = 1$ and consequently $dc_t$ is a fixed-point-free automorphism of $\mathcal L(G)$ of order $p$. Then $\mathcal L(G)$ is nilpotent by \cite[Theorem 3]{Z89} and thus so is $U$. 

As a final application of \cref{thm:analytic}, we show that solvable just-infinite $p$-adic analytic groups are either torsion-free or they satisfy a torsion probabilistic identity.
\begin{corollary}
Let $G$ be a just-infinite $p$-adic analytic pro-$p$ group. If $G \not\cong \bbZ_p$, then it either satisfies a torsion probabilistic identity or it is randomly free.
\end{corollary}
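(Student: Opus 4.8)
The plan is to invoke \cref{cor:3.5} to reduce to the solvable case and then to exhibit a single torsion element whose conjugation is fixed-point-free, so that \cref{torsionpadic} applies. By \cref{cor:3.5}, a $p$-adic analytic pro-$p$ group is either solvable or randomly free; in the latter case the second alternative of the statement holds and we are done. Hence I assume that $G$ is solvable and aim to show that $G$ satisfies a torsion probabilistic identity, that is, via \cref{torsionpadic}, that $\mathrm{Tor}(G)$ has positive measure.

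First I would unravel the structure of a solvable just-infinite $p$-adic analytic pro-$p$ group. The last nontrivial term $A_0$ of the derived series is a nontrivial abelian characteristic subgroup, hence open by just-infiniteness, so $G$ is virtually abelian. Setting $A = C_G(A_0)$, a Schur-type argument together with just-infiniteness (any nontrivial finite normal subgroup would have to be open, hence trivial in an infinite group) shows that $A$ is abelian, torsion-free and self-centralizing; thus $A \cong \bbZ_p^d$ with $d \ge 1$ and $Q := G/A$ is a finite $p$-group acting faithfully on $V := A \otimes_{\bbZ_p} \bbQ_p \cong \mathcal{L}(G)$. Since $G$ has torsion, $Q \neq 1$. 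The key consequence of just-infiniteness I would extract is that $V$ is an \emph{irreducible} $\bbQ_p[Q]$-module: any proper nonzero submodule $W$ would meet the lattice $A$ in a nontrivial, non-open, $G$-invariant (hence normal) subgroup $A \cap W$, contradicting just-infiniteness.

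Now I would produce the fixed-point-free torsion element. Choose a nontrivial central element $\bar z \in Z(Q)$, of order $p^k$ say. As $\bar z$ is central, the fixed space $V^{\langle \bar z\rangle}$ is a $\bbQ_p[Q]$-submodule, so by irreducibility it is either $0$ or $V$; the latter would force $\bar z$ to act trivially on $V$, contradicting faithfulness, so $V^{\langle \bar z\rangle} = 0$ and in particular $A^{\langle \bar z\rangle} = 0$. Lift $\bar z$ to any $z \in G$. Then $z^{p^k} \in A$, and since $z^{p^k}$ commutes with $z$ it is fixed by conjugation by $z$, that is, by $\bar z$; hence $z^{p^k} \in A^{\langle \bar z\rangle} = 0$, so $z$ is torsion. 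Moreover $dc_z$ acts on $\mathcal{L}(G) \cong V$ as $\bar z$, which is fixed-point-free, so condition (iii) of \cref{torsionpadic} holds for $t = z$, and therefore $\mu(\mathrm{Tor}(G)) > 0$; that is, $G$ satisfies a torsion probabilistic identity.

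I expect the main obstacle to be the structural reduction rather than the final step: one must use just-infiniteness several times to pin down $A$ as a self-centralizing copy of $\bbZ_p^d$ and, crucially, to force irreducibility of $V$. Granting this, the decisive and perhaps surprising point is that a lift of a central element is automatically torsion, because its $p^k$-th power is an $A$-fixed point of a fixed-point-free automorphism; this is exactly what overcomes the difficulty that the extension $1 \to A \to G \to Q \to 1$ need not split.
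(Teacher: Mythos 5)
Your proof is correct, and at the decisive step it takes a genuinely different route from the paper's. Both arguments begin the same way: reduce to the solvable case via the dichotomy of \cref{cor:3.5}, note that $G$ is then virtually abelian, and conclude by exhibiting a torsion element satisfying the fixed-point-free criterion of \cref{torsionpadic}. The paper does this by choosing a maximal abelian open uniform subgroup $U$ and asserting that maximality forces $C_U(t)=1$ for \emph{every} torsion element $t$, with no further justification. You instead pin down the structure completely --- $A=C_G(A_0)\cong\bbZ_p^d$ is self-centralizing and torsion-free, and $Q=G/A$ acts faithfully and (by just-infiniteness) irreducibly on $V=A\otimes_{\bbZ_p}\bbQ_p\cong\mathcal L(G)$ --- and then manufacture one specific torsion element: a lift $z$ of a nontrivial central element of $Q$, which is automatically torsion because $z^{p^k}$ lies in the fixed lattice $A^{\langle\bar z\rangle}=1$, and whose conjugation acts fixed-point-freely on $\mathcal L(G)$ by Schur's lemma applied to the central element $\bar z$. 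This extra care buys something real: irreducibility alone does not force an \emph{arbitrary} torsion element to act without fixed points (non-central elements of a non-abelian point group can very well have nonzero fixed vectors in a faithful irreducible module), so the paper's blanket claim about all of $\mathrm{Tor}(G)$ is at best in need of justification, whereas your restriction to a central lift is airtight and, as you observe, also explains why the coset $\bar zA$ must contain torsion even when the extension does not visibly split. The only cost of your route is length; as a small bonus it shows the torsion hypothesis is needed only to guarantee $Q\neq 1$.
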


\begin{proof}
    If $G$ is not randomly free, then it is solvable by \cref{thm:analytic}. In that case, it is virtually abelian and we can choose a maximal abelian open normal subgroup $U$ of $G$.
    We have a short exact sequence
    \[1 \rightarrow U \rightarrow G \xrightarrow \pi B \rightarrow 1,\]
    where $U$ is infinite and $B$ is a finite $p$-group. 
    Observe that $U$ is torsion-free, since the torsion subgroup of $U$ is characteristic and hence normal in $G$; given that $G$ is just-infinite, this normal subgroup needs to be trivial. Similarly, for $G$ to be just-infinite, $U$ does not admit any $G$-invariant subgroups of infinite index.
    
    Since $G \not \cong \bbZ_p$, the group $B$ and thus its center $Z(B)$ are non-trivial. 
    Suppose first that $Z(B)$ acts trivially on $U$. Pick any non-trivial element $z \in B$ and consider $N=\pi^{-1}(\langle z \rangle)$. This is a normal subgroup of $G$ which contains $U$. As $N$ is a cyclic central extension of an abelian group, it is abelian; a contradiction to the maximality of $U$.
    So there exists $z \in Z(B)$ that acts non-trivially on $U$ and fixes no points of $U$; indeed, if it did, the subgroup of $U$ consisting of points fixed by $z$ would be a non-trivial $B$-invariant subgroup of infinite index. It follows from a standard calculation that $H^2(\langle z \rangle,U) = 0$ (see \cite[Theorem 6.2.2]{Weibel}), so in particular $U\rtimes \langle z \rangle$ is a subgroup of $G$ and $C_U(z)=1$. One concludes from \cref{torsionpadic} that $G$ satisfies a torsion probabilistic identity.
\end{proof}


\begin{thebibliography}{DdSMS99}

\bibitem[Ab{\'e}05]{Ab05}
M.~Ab{\'e}rt.
\newblock Group laws and free subgroups in topological groups.
\newblock {\em Bull. Lond. Math. Soc.}, 37(4):525--534, 2005.

\bibitem[Bou72]{bourbaki1972}
N.~Bourbaki.
\newblock {\em Elements of mathematics. Commutative algebra}.
\newblock Hermann, Paris; Addison-Wesley Publishing Co., Reading, MA, 1972.
\newblock Translated from the French.

\bibitem[BG07]{BG}
E.~Breuillard and T.~Gelander.
\newblock A topological {T}its alternative.
\newblock {\em Ann. of Math. (2)}, 166(2):427--474, 2007.


\bibitem[Cas76]{Cassels}
J.~W.~S. Cassels.
\newblock An embedding theorem for fields.
\newblock {\em Bull. Austral. Math. Soc.}, 14:193--198, 1976.

\bibitem[Cha94]{Zoe94}
Z.~Chatzidakis.
\newblock Some remarks on profinite HNN-extensions.
\newblock {\em Israel J. Math.}, 85(1--3):11--18, 1994.

\bibitem[dCM07]{CM07}
Y.~de~Cornulier and A.~Mann.
\newblock Some residually finite groups satisfying laws.
\newblock In {\em Geometric group theory}, Trends Math., pages 45--50. Birkh{\"a}user, Basel, 2007.

\bibitem[DdSMS99]{DDMS}
J.~D. Dixon, M.~P.~F. du~Sautoy, A.~Mann, and D.~Segal.
\newblock {\em Analytic pro-{$p$} groups}, volume~61 of {\em Cambridge Studies in Advanced Mathematics}.
\newblock Cambridge University Press, Cambridge, second edition, 1999.

\bibitem[DPSS03]{DPSS}
J.~D. Dixon, L.~Pyber, {\'A}.~Seress, and A.~Shalev.
\newblock Residual properties of free groups and probabilistic methods.
\newblock {\em J. Reine Angew. Math.}, 556:159--172, 2003.

\bibitem[Eis95]{Eisenbud}
D.~Eisenbud.
\newblock {\em Commutative algebra}, volume 150 of {\em Graduate Texts in Mathematics}.
\newblock Springer-Verlag, New York, 1995.
\newblock With a view toward algebraic geometry.

\bibitem[FAR25]{FR25}
J.~Fariña-Asategui and S.~Radi.
\newblock Torsion elements in branch pro-p groups.
\newblock {\em preprint arxiv.org/abs/2407.01746}, 2025.

\bibitem[GGK20]{GGK19}
O.~Garaialde, A.~Garrido, and B.~Klopsch.
\newblock Pro-{$p$} groups of positive rank gradient and {H}ausdorff dimension.
\newblock {\em J. Lond. Math. Soc. (2)}, 101(3):1008--1040, 2020.

\bibitem[Hal50]{H50}
P.~R. Halmos.
\newblock {\em Measure Theory}, volume~18 of {\em Graduate Texts in Mathematics}.
\newblock Springer New York, NY, 1950.

\bibitem[HZ08]{HZ08}
W.~Herfort and P.~Zalesskii.
\newblock Virtually free pro-{$p$} groups whose torsion elements have finite centralizer.
\newblock {\em Bull. Lond. Math. Soc.}, 40(6):929--936, 2008.

\bibitem[JZ02]{JZ02}
A. Jaikin-Zapirain.
\newblock On linear just infinite pro-{$p$} groups.
\newblock {\em J. Algebra}, 255(2):392--404, 2002.

\bibitem[Khu93]{Khu93}
E.~I. Khukhro.
\newblock {\em Nilpotent groups and their automorphisms}, volume~8 of {\em De Gruyter Expositions in Mathematics}.
\newblock Walter de Gruyter \& Co., Berlin, 1993.

\bibitem[KZ11]{KZ}
D.~H. Kochloukova and P.~A. Zalesskii.
\newblock On pro-{$p$} analogues of limit groups via extensions of centralizers.
\newblock {\em Math. Z.}, 267(1-2):109--128, 2011.

\bibitem[Lab67]{Lab67}
J.~P. Labute.
\newblock Classification of {D}emushkin groups.
\newblock {\em Canadian J. Math.}, 19:106--132, 1967.


\bibitem[LS16]{LS16}
M.~Larsen and A.~Shalev.
\newblock A probabilistic {T}its alternative and probabilistic identities.
\newblock {\em Algebra Number Theory}, 10(6):1359--1371, 2016.

\bibitem[LS18]{LS18}
M.~Larsen and A.~Shalev.
\newblock Words, {H}ausdorff dimension and randomly free groups.
\newblock {\em Math. Ann.}, 371(3-4):1409--1427, 2018.

\bibitem[LP00]{LP00}
L.~L\'evai and L.~Pyber.
\newblock Profinite groups with many commuting pairs or involutions.
\newblock {\em Arch. Math.}, 75:1--7, 2000.

\bibitem[Lub82]{Lub82}
A.~Lubotzky.
\newblock Combinatorial group theory for pro-{$p$}-groups.
\newblock {\em J. Pure Appl. Algebra}, 25(3):311--325, 1982.

\bibitem[LS03]{LS03}
A.~Lubotzky and D.~Segal.
\newblock {\em Subgroup growth}, volume 212 of {\em Progress in Mathematics}.
\newblock Birkh\"auser Verlag, Basel, 2003.

\bibitem[Nik23]{Ni}
N.~Nikolov.
\newblock On profinite groups with positive rank gradient.
\newblock {\em Rev.~Mat.~Iberoam.}, 39(4):1323–1347, 2023.

\bibitem[Rib17]{R17}
L.~Ribes.
\newblock {\em Profinite graphs and groups}, volume~66 of {\em A Series of Modern Surveys in Mathematics}.
\newblock Springer, Cham, 2017.

\bibitem[RZ10]{RZ10}
L.~Ribes and P.~Zalesskii.
\newblock {\em Profinite groups}, volume~40 of {\em A Series of Modern Surveys in Mathematics}.
\newblock Springer-Verlag, Berlin, second edition, 2010.

\bibitem[Ser65]{Ser65}
J-P. Serre.
\newblock {\em Lie algebras and {L}ie groups}.
\newblock W. A. Benjamin, Inc., New York-Amsterdam, 1965.
\newblock Lectures given at Harvard University, 1964.

\bibitem[Sha00]{Sh00}
A.~Shalev.
\newblock Lie methods in the theory of pro-{$p$} groups.
\newblock In {\em New horizons in pro-{$p$} groups}, volume 184 of {\em Progr. Math.}, pages 1--54. Birkh\"auser Boston, Boston, MA, 2000.

\bibitem[SZ14]{SZ14}
I.~Snopce and P.~A. Zalesskii.
\newblock {S}ubgroup properties of pro-{$p$} extensions of centralizers.
\newblock {\em Selecta Math. (N.S.)}, 20(2):465--489, 2014.

\bibitem[Son74]{S74}
J.~Sonn.
\newblock Epimorphisms of {D}emushkin groups.
\newblock {\em Israel J. Math.}, 17:176--190, 1974.

\bibitem[Sze05]{Sz05}
B.~Szegedy.
\newblock Almost all finitely generated subgroups of the {N}ottingham group are free.
\newblock {\em Bull. Lond. Math. Soc.}, 37(1):75--79, 2005.

\bibitem[Wei94]{Weibel}
C.~A. Weibel.
\newblock {\em An introduction to homological algebra}, volume~38 of {\em Cambridge Studies in Advanced Mathematics}.
\newblock Cambridge University Press, Cambridge, 1994.

\bibitem[WZ92]{WZ}
J.~S. Wilson and E.~I. Zelmanov.
\newblock Identities for {L}ie algebras of pro-{$p$} groups.
\newblock {\em J. Pure Appl. Algebra}, 81(1):103--109, 1992.

\bibitem[Zha89]{Z89}
J.~G. Zha.
\newblock Fixed-point-free automorphisms of {L}ie algebras.
\newblock {\em Acta Math. Sinica (N.S.)}, 5(1):95--96, 1989.
\newblock A Chinese summary appears in Acta Math. Sinica {\bf 33} (1989), no. 2, 288.

\end{thebibliography}

\end{document}